\documentclass[11pt]{article}
\usepackage{amsmath,amsthm,amsfonts,amssymb}
\usepackage{mathrsfs}
\usepackage[colorlinks,linkcolor=blue,anchorcolor=blue,citecolor=blue]{hyperref}
\usepackage[numbers,sort&compress]{natbib}
\usepackage[margin=1.15in]{geometry}
\numberwithin{equation}{section}
\DeclareMathOperator{\tr}{tr}
\DeclareMathOperator{\ind}{ind}
\DeclareMathOperator{\nul}{nul}
\DeclareMathOperator{\divg}{div}
\DeclareMathOperator{\image}{im}
\DeclareMathOperator{\mult}{mult}
\DeclareMathOperator{\proj}{proj}
\DeclareMathOperator{\spec}{spec}
\DeclareMathOperator{\rank}{rank}
\DeclareMathOperator{\diag}{diag}

\theoremstyle{plain}
\newtheorem{theorem}{Theorem}[section]
\newtheorem{lemma}[theorem]{Lemma}
\newtheorem{proposition}[theorem]{Proposition}
\newtheorem{corollary}[theorem]{Corollary}
\theoremstyle{definition}
\newtheorem{definition}[theorem]{Definition}
\theoremstyle{remark}
\newtheorem{remark}[theorem]{Remark}
\newtheorem{example}[theorem]{Example}

\allowdisplaybreaks

\title{\bf Instability and Morse Index of\\ Exponentially Subelliptic Harmonic Maps}
\author{Xin Huang\thanks{The author was partially supported by the Basic Research Program of Jiangsu (Grant No. BK20250730).}}
\date{}

\begin{document}
\maketitle

\begin{abstract}
Let $M$ be a closed manifold with a bracket-generating distribution $H$. We study the Morse index of exponentially subelliptic harmonic maps. After establishing finiteness, we estimate the index through two test spaces. The extrinsic one averages the index form over projected ambient parallel frames, reducing the estimate to a trace identity involving an extrinsic scalar with a quartic exponential term. The intrinsic one uses parallel sections of the pullback bundle, on which the index form descends to a coupled quadratic form that is exact when a global parallel frame exists. The bounds apply to round spheres, products of spheres, and convex hypersurfaces. On a closed Heisenberg nilmanifold, the intrinsic reduction yields a spectral formula for the index.

\smallskip
\noindent\textbf{Keywords:} Exponentially subelliptic harmonic map, Morse index, sub-Riemannian geometry, stability, Heisenberg nilmanifold

\smallskip
\noindent\textbf{MSC:} 58E20, 53C17, 35H20
\end{abstract}

\section{Introduction}\label{sec:intro}
Exponentially harmonic maps were introduced by Eells and Lemaire \cite{EL92} as critical points of the exponential energy, and their variational properties have been investigated in a number of settings \cite{Ch16,LiuJC08}. For maps from a compact Riemannian manifold into the unit sphere $S^n$, Chiang \cite{Ch20} obtained the lower bound $\ind(f)\ge n+1$ under a positivity assumption on the exponential stress-energy tensor, using conformal vector fields of the sphere as variations. In the subelliptic setting, Chiang, Dragomir and Esposito established a first variation formula and a partial regularity theorem for maps from the Heisenberg group into spheres \cite{CDE19}. They subsequently derived the second variation formula in the pseudohermitian setting and studied the associated Jacobi operator \cite{CDE20}. On bounded domains satisfying suitable Poincar\'e and compactness assumptions, they obtained a discrete Dirichlet spectrum and a spectral characterization of stability. Earlier stability results for pseudoharmonic maps are due to Barletta and Dragomir \cite{BD04}. For the horizontal Dirichlet energy on general bracket-generating manifolds, Dong \cite{Don21} developed a theory of subelliptic harmonic maps, while Chong, Dong and Yang \cite{CDY24} studied stability in the presence of a potential.

The question addressed in this paper is the quantitative estimate of the Morse index of an exponentially subelliptic harmonic map $f$. Since the index form is degenerate elliptic in the sub-Riemannian framework, we first prove in Proposition \ref{prop:finiteness} that both the index and the nullity are finite. The proof relies on the compactness of the horizontal Sobolev embedding and is carried out at the level of quadratic forms. The index is then estimated by the elementary principle that $\ind(f)\ge\ind(I_H|_V)$ for every space $V$ of test fields, with equality when $V$ exhausts all sections (Lemma \ref{lem:test-space}). This principle is only an organizing device. The content of the paper lies in the two choices of $V$ studied below, which are of complementary character.

The first test space is extrinsic. If $N$ is isometrically immersed in $\mathbb R^{n+p}$, the tangential components $E_a^\top$ of an ambient parallel orthonormal frame span a test space of dimension at most $n+p$. This is the parallel-frame averaging construction of Howard and Wei \cite{How86}. Related spherical test fields were used by Leung \cite{Leu82} for harmonic maps and by Chiang \cite{Ch20} for exponentially harmonic maps; Liu \cite{LiuJC08} applied the extrinsic method to exponentially harmonic maps into convex hypersurfaces. The space exists for every map. We parametrize it by the coefficient space $\mathbb R^{n+p}$ and use the trace of the resulting coefficient matrix rather than diagonalizing it, which can be computed in closed form. 

\begin{theorem}\label{thm:A}
Let $\iota:N^n\to\mathbb R^{n+p}$ be an isometric immersion and let $f:(M,H,g_H)\to N$ be an exponentially subelliptic harmonic map from a closed manifold $M$. Then
\[
\sum_{a=1}^{n+p}I_H\bigl(E_a^\top,E_a^\top\bigr)=\int_Me^{c}\,\widetilde{\mathscr Q}_N(f)\,dv_g=:\widetilde{\mathcal T}(f),
\]
where $\widetilde{\mathscr Q}_N$ is the extrinsic scalar of Definition \ref{def:howard}. In particular $f$ is unstable if $\widetilde{\mathcal T}(f)<0$.
\end{theorem}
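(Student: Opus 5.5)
Since the extrinsic scalar of Definition \ref{def:howard} is not displayed above, the plan is to carry out the computation that produces it. I take $\widetilde{\mathscr Q}_N(f)$ to be the pointwise integrand left after averaging, and will check it against that definition at the end. The plan has four steps: write the second variation formula for $E_{\exp,H}(f)=\int_M e^{|df_H|^2/2}\,dv_g$; insert $V_a=E_a^\top$; sum over $a$; simplify pointwise using the Gauss and Weingarten formulas of $\iota$.

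\textbf{Step 1.} I would start from the index form in the form it is presumably derived earlier in the paper. With $c=|df_H|^2/2$ (the $c$ in the statement) and a local horizontal orthonormal frame $\{e_i\}$ of $H$,
\[
I_H(V,V)=\int_M e^{c}\Bigl(\langle\nabla\!_H V,df_H\rangle^2+\sum_i|\widetilde\nabla_{e_i}V|^2-\sum_i\langle R^N(V,df(e_i))df(e_i),V\rangle\Bigr)dv_g .
\]
Here $\langle\nabla\!_H V,df_H\rangle=\sum_i\langle\widetilde\nabla_{e_i}V,df(e_i)\rangle$ is the derivative of $c$ in direction $V$. This quadratic term is the ``quartic exponential term'' mentioned in the abstract. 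Any divergence terms coming from the non-horizontal connection part should integrate to zero because $M$ is closed. I will take the $H$-version of this formula as established, since the first and second variation formulas precede the index definition.

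\textbf{Step 2.} Let $E_a$ be parallel in $\mathbb R^{n+p}$, let $B$ be the second fundamental form and $A_\xi$ the shape operator. Then the Weingarten formula gives
\[
\widetilde\nabla_X E_a^\top=A_{E_a^\perp}X,\qquad X=df(e_i).
\]
To sum over $a$, I use that $\{E_a\}$ is orthonormal, so $\sum_a\langle E_a,u\rangle\langle E_a,w\rangle=\langle u,w\rangle$ for $u,w\in\mathbb R^{n+p}$. Consequently
\[
\sum_a|A_{E_a^\perp}X|^2=\sum_k|B(X,\epsilon_k)|^2
\]
for a local orthonormal frame $\{\epsilon_k\}$ of $TN$. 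The curvature term follows from the Gauss equation:
\[
\sum_a\langle R(E_a^\top,X)X,E_a^\top\rangle=\sum_k\bigl(\langle B(\epsilon_k,\epsilon_k),B(X,X)\rangle-|B(X,\epsilon_k)|^2\bigr)=\langle nH_N,B(X,X)\rangle-\sum_k|B(X,\epsilon_k)|^2 ,
\]
where $H_N$ is the mean curvature vector of $\iota$.

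\textbf{Step 3.} The quartic term is
\[
\sum_a\Bigl(\sum_i\langle B(X_i,X_i),E_a^\perp\rangle\Bigr)^2=\Bigl|\sum_i B(X_i,X_i)\Bigr|^2,\qquad X_i=df(e_i),
\]
using $\langle A_\xi X,X\rangle=\langle B(X,X),\xi\rangle$. Combining the three terms gives the pointwise integrand
\[
\Bigl|\sum_i B(X_i,X_i)\Bigr|^2+\sum_i\Bigl(2\sum_k|B(X_i,\epsilon_k)|^2-\langle nH_N,B(X_i,X_i)\rangle\Bigr).
\]
I would then match this with $\widetilde{\mathscr Q}_N(f)$ as given in Definition \ref{def:howard}. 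The instability claim is immediate: if the sum of $I_H(E_a^\top,E_a^\top)$ is negative, some $I_H(E_a^\top,E_a^\top)<0$, so $\ind(f)\ge1$.

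\textbf{Main obstacle.} I expect the hardest part to be the quartic term: getting the correct coefficient in the second variation of the exponential energy, and verifying that no terms survive from $\nabla_V df$ involving the non-horizontal (torsion) part of the sub-Riemannian connection. To handle this, I would use that $E_a^\top$ is the pullback of a section of $TN$, so $\nabla V$ depends only on $df$. The first-variation Euler--Lagrange equation $\mathrm{div}_H(e^c df_H)=0$ is what eliminates the cross terms.
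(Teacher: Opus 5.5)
Your proposal is correct and follows the paper's proof essentially verbatim. Inserting $E_a^\top$ into \eqref{eq:index-polarized} and summing over $a$, your three pointwise computations are exactly \eqref{eq:l1}--\eqref{eq:l3} of Lemma \ref{lem:frame}: Weingarten plus Parseval for the $\mathcal F^2$ term, $\sum_a|\tilde\nabla_{e_i}E_a^\top|^2=\sigma(df(e_i))$ for the gradient term, and the Gauss equation giving $\langle S_f,\eta\rangle-\sum_i\sigma(df(e_i))$ for the curvature term. They combine to the integrand of Definition \ref{def:howard}.

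The ``main obstacle'' you flag does not arise here. $I_H$ is defined by \eqref{eq:index-polarized} using the pullback Levi-Civita connection, so there is no torsion part to track. The Euler--Lagrange equation is already used in deriving that formula (through Corollary \ref{cor:F-vanishes}), and the three averaging identities are purely pointwise, valid for any smooth map.
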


Because the exponential energy is not quadratic in $df$, its second variation contains the term $\mathcal F(W)^2$ with $\mathcal F(W)=\sum_i\langle\tilde\nabla_{e_i}W,df(e_i)\rangle$. Averaging this term produces in $\widetilde{\mathscr Q}_N$ the quartic contribution $\bigl|\sum_iB(df(e_i),df(e_i))\bigr|^2$, which is absent for the Dirichlet energy. This term is responsible for the smallness hypotheses of Section \ref{sec:targets}. Moreover, $\widetilde{\mathscr Q}_N$ has to be evaluated on the whole horizontal differential and cannot be controlled one direction at a time (Example \ref{ex:counterexample}).

The coefficient matrix $\mathbb I=\bigl(I_H(E_a^\top,E_b^\top)\bigr)$ has quadratic form $\lambda\mapsto I_H(P\lambda,P\lambda)$, where $P\lambda$ is the tangential projection along $f$ of a constant ambient vector. Hence its eigenvalues are bounded below by a curvature bound for $N$. Combined with Theorem \ref{thm:A}, this gives the following.

\begin{theorem}\label{thm:B}
In the situation of Theorem \ref{thm:A}, assume moreover that $M$ is connected, that $H$ is bracket-generating, and that $K^N\le\kappa$ along $f(M)$ for some $\kappa>0$. If $\widetilde{\mathcal T}(f)<0$, then $C_0:=\kappa\int_Me^{c}|df_H|^2\,dv_g>0$ and
\[
\ind(f)\ \ge\ \Bigl\lceil\frac{|\widetilde{\mathcal T}(f)|}{C_0}\Bigr\rceil .
\]
\end{theorem}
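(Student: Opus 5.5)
We bound the index below by the number of negative eigenvalues of the symmetric coefficient matrix $\mathbb I=\bigl(I_H(E_a^\top,E_b^\top)\bigr)_{a,b=1}^{n+p}$. By Lemma \ref{lem:test-space}, $\ind(f)\ge\ind(I_H|_V)$ with $V=\{P\lambda:\lambda\in\mathbb R^{n+p}\}$, where $P\lambda=\sum_a\lambda_aE_a^\top$. The quadratic form $\lambda\mapsto I_H(P\lambda,P\lambda)=\lambda^T\mathbb I\lambda$ is negative on the span of the eigenvectors of $\mathbb I$ with negative eigenvalues. On that span $P$ is injective, because $P\lambda=0$ forces $\lambda^T\mathbb I\lambda=0$. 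Hence $\ind(I_H|_V)\ge\#\{\text{negative eigenvalues of }\mathbb I\}=:k$. It therefore suffices to show that every eigenvalue $\mu$ of $\mathbb I$ satisfies $\mu\ge -C_0$. Granting this, Theorem \ref{thm:A} gives $\tr\mathbb I=\widetilde{\mathcal T}(f)<0$. Dropping the nonnegative eigenvalues, we get $\widetilde{\mathcal T}(f)\ge\sum_{\mu<0}\mu\ge -kC_0$, so $k\ge|\widetilde{\mathcal T}(f)|/C_0$. Since $k$ is an integer, the ceiling bound follows.

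The positivity $C_0>0$ uses connectedness and bracket generation. If $C_0=0$, then $df_H\equiv0$, and $f$ is constant along horizontal curves. By Chow--Rashevskii, any two points of the connected manifold $M$ are joined by a horizontal curve, so $f$ is constant. For a constant map every $W$ satisfies $\tilde\nabla_{e_i}W=\nabla^{f}_{e_i}W$ with no curvature contribution. The index form then reduces to $\int_Me^{c}\bigl(|\nabla_HW|^2+\mathcal F(W)^2\bigr)\ge0$, with $\mathcal F(W)=0$ because $df=0$. Thus $I_H(E_a^\top,E_a^\top)\ge0$ for every $a$, and $\widetilde{\mathcal T}(f)\ge0$, contradicting the hypothesis. (Here $c$ is whatever constant normalizes the exponential density in the second variation formula. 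For a constant map the weight $e^{c}$ is constant and the argument is unaffected.)

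The main step is the eigenvalue lower bound $I_H(P\lambda,P\lambda)\ge -C_0|\lambda|^2$. Write the second variation from the paper schematically as
\[
I_H(W,W)=\int_Me^{c}\Bigl(|\tilde\nabla_HW|^2+\mathcal F(W)^2-\sum_i\langle R^N(W,df(e_i))df(e_i),W\rangle\Bigr)dv_g .
\]
The first two terms are nonnegative and can be discarded. For the curvature term, $\langle R^N(W,X)X,W\rangle=K^N(W,X)\bigl(|W|^2|X|^2-\langle W,X\rangle^2\bigr)$. When this sectional curvature is positive it is at most $\kappa|W|^2|X|^2$. When it is nonpositive the curvature term already has the favourable sign. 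Pointwise, $|P\lambda|\le|\lambda|$ because $P\lambda$ is an orthogonal projection of the constant vector $\lambda$. Combining these gives
\[
\sum_i\langle R^N(P\lambda,df(e_i))df(e_i),P\lambda\rangle\le\kappa|\lambda|^2|df_H|^2 .
\]
Integrating against $e^{c}$ yields $\lambda^T\mathbb I\lambda\ge -C_0|\lambda|^2$. The delicate point is matching the exact sign conventions and the weight in the paper's second variation formula, so that the discarded terms really are nonnegative. In particular, $\mathcal F(W)^2$ must appear with a positive coefficient, which holds because the exponential density is convex.
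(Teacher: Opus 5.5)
Your proposal is correct and follows the paper's proof essentially step for step: the coefficient matrix $\mathbb I$, the bound $\lambda^{\mathsf T}\mathbb I\lambda\ge -C_0|\lambda|^2$ obtained by discarding the two nonnegative terms and estimating $\mathcal R_T$ with $|P\lambda|\le|\lambda|$, the trace identity of Theorem \ref{thm:A}, the count of negative eigenvalues, and injectivity of $P$ on the negative eigenspace. The only difference is in the step $C_0>0$, where the paper needs no Chow--Rashevskii argument: $df_H\equiv0$ already forces $S_f=0$ and $\sigma(df(e_i))=0$, hence $\widetilde{\mathscr Q}_N(f)\equiv0$ and $\widetilde{\mathcal T}(f)=0$ (note also that $c=\tfrac12|df_H|^2$ is a function, not a normalizing constant, and it is simply $0$ in that case).
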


For round spheres the quotient in Theorem \ref{thm:B} is evaluated exactly and the radius cancels: $\widetilde{\mathcal T}(f)/C_0=\bar s_w-(n-2)$, where $\bar s_w$ is the mean of $|df_H|^2$ with respect to the measure $e^{|df_H|^2/2}|df_H|^2dv_g$. Hence every nonconstant exponentially subelliptic harmonic map $f:M\to S^n(r)$ with $n\ge3$ satisfies $\ind(f)\ge\max\{0,\lceil(n-2)-\bar s_w\rceil\}$ (Corollary \ref{cor:index-sphere}). Analogous statements, together with rigidity theorems, hold for products of spheres and for convex hypersurfaces.

The second test space is intrinsic. Let $P(f)$ be the space of parallel sections of $f^{-1}TN$. When it is nonzero, choose a pointwise orthonormal basis $W_1,\dots,W_k$ and consider the sections $\sum_\alpha v_\alpha W_\alpha$ with arbitrary $v_\alpha\in C^\infty(M)$. On these sections the index form reduces to a coupled quadratic form on $\mathbb R^k$-valued functions.

\begin{theorem}\label{thm:C}
Let $f$ be an exponentially subelliptic harmonic map from a closed manifold. Put $\xi_\alpha:=\sum_i\langle W_\alpha,df(e_i)\rangle e_i\in\Gamma(H)$ and $\rho_{\alpha\beta}:=\sum_i\tilde R_N(df(e_i),W_\alpha,W_\beta,df(e_i))$. For $v=(v_1,\dots,v_k)\in C^\infty(M;\mathbb R^k)$,
\[
I_H\Bigl(\sum_\alpha v_\alpha W_\alpha,\sum_\beta v_\beta W_\beta\Bigr)
=\int_Me^{c}\Bigl[\Bigl(\sum_\alpha\langle\nabla^Hv_\alpha,\xi_\alpha\rangle\Bigr)^2+\sum_\alpha|\nabla^Hv_\alpha|^2-\sum_{\alpha,\beta}\rho_{\alpha\beta}v_\alpha v_\beta\Bigr]dv_g=:\mathcal Q_f(v).
\]
Consequently $\ind(f)\ge\ind(\mathcal Q_f)$. If $\dim P(f)=n$, that is, if $f^{-1}TN$ admits a global parallel orthonormal frame, then $\ind(f)=\ind(\mathcal Q_f)$ and $\nul(f)=\nul(\mathcal Q_f)$.
\end{theorem}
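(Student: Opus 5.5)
The plan is to substitute $W=\sum_\alpha v_\alpha W_\alpha$ into the second variation formula and simplify. The formula has the form (with $c=|df_H|^2/2$, up to the normalization fixed earlier)
\[
I_H(W,W)=\int_M e^{c}\Bigl[\mathcal F(W)^2+\sum_i|\tilde\nabla_{e_i}W|^2-\sum_i\tilde R_N(df(e_i),W,W,df(e_i))\Bigr]dv_g,
\qquad \mathcal F(W)=\sum_i\langle\tilde\nabla_{e_i}W,df(e_i)\rangle .
\]
Here $\{e_i\}$ is a local $g_H$-orthonormal frame of $H$, and $\tilde\nabla$ is the pullback connection. The key input is that each $W_\alpha$ lies in $P(f)$, so $\tilde\nabla W_\alpha=0$. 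This gives $\tilde\nabla_{e_i}W=\sum_\alpha (e_iv_\alpha)W_\alpha$. By pointwise orthonormality of the $W_\alpha$,
\[
\sum_i|\tilde\nabla_{e_i}W|^2=\sum_\alpha|\nabla^Hv_\alpha|^2 .
\]
Similarly,
\[
\mathcal F(W)=\sum_{\alpha,i}(e_iv_\alpha)\langle W_\alpha,df(e_i)\rangle=\sum_\alpha\langle\nabla^Hv_\alpha,\xi_\alpha\rangle .
\]
The curvature term is tensorial, so it is bilinear in the $v_\alpha$ and becomes $\sum_{\alpha,\beta}\rho_{\alpha\beta}v_\alpha v_\beta$. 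One should check that parallel sections have constant pointwise inner products along horizontal curves. Since $H$ is bracket-generating and the curvature relation is consistent, orthonormality at one point propagates; alternatively, orthonormality can be arranged by Gram–Schmidt with constant coefficients. Independence of the frame $\{e_i\}$ is clear. This yields the identity $I_H(W,W)=\mathcal Q_f(v)$.

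For $\ind(f)\ge\ind(\mathcal Q_f)$, I would use Lemma \ref{lem:test-space}. The linear map $\Phi:v\mapsto\sum_\alpha v_\alpha W_\alpha$ is injective, because the $W_\alpha$ are pointwise linearly independent. By polarization it intertwines the bilinear forms. Hence a negative definite subspace for $\mathcal Q_f$ of dimension $m$ maps to one for $I_H$ of the same dimension.

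If $\dim P(f)=n$, then $W_1,\dots,W_n$ is a global frame of $f^{-1}TN$. In that case $\Phi$ is a bijection from $C^\infty(M;\mathbb R^n)$ onto $\Gamma(f^{-1}TN)$, since any section $W$ has $v_\alpha=\langle W,W_\alpha\rangle$. So $I_H$ and $\mathcal Q_f$ are isometric quadratic forms, and their index and nullity coincide. For the nullity, I must check that the kernel of $I_H$ (Jacobi fields, defined via the bilinear form) corresponds to the kernel of $\mathcal Q_f$ under $\Phi$. This follows from the polarized identity $I_H(\Phi v,\Phi u)=\mathcal Q_f(v,u)$.

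The main obstacle is the constancy of pointwise orthonormality for elements of $P(f)$. Parallelism along $\tilde\nabla$ with respect to all directions of $M$ (or only $H$, depending on the definition) gives $X\langle W_\alpha,W_\beta\rangle=0$ by metric compatibility. If the definition only imposes horizontal parallelism, I would use the fact that $H$ is bracket-generating and $M$ is connected (Chow's theorem) to conclude that the inner products are constant. A basis of $P(f)$ can then be orthonormalized globally.
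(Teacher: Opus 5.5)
Your derivation of $I_H(\Phi v,\Phi v)=\mathcal Q_f(v)$ and of both index statements is the same as the paper's. You substitute $\Phi v=\sum_\alpha v_\alpha W_\alpha$, use $\tilde\nabla W_\alpha=0$ and pointwise orthonormality, and apply Lemma~\ref{lem:test-space}, with bijectivity of $\Phi$ when $k(f)=n$.

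The ``main obstacle'' you identify is not one. $P(f)$ is defined by $\tilde\nabla_YW=0$ for all $Y\in\Gamma(TM)$, so metric compatibility gives $Y\langle W_\alpha,W_\beta\rangle=0$ in every direction. The inner products are therefore constant on the connected $M$, and the pointwise orthonormal basis is simply chosen. No appeal to Chow or to a ``consistent curvature relation'' is needed.

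The genuine gap is in the nullity. In this paper $\nul(f)$ is the dimension of the radical of $I_H$ on the completion $\mathcal H^1_H$ (Proposition~\ref{prop:finiteness}(iii)). Likewise $\nul(\mathcal Q_f)$ is taken on the completion $\mathcal H^1_H(M;\mathbb R^n)$, not on smooth sections. Your polarized identity $I_H(\Phi v,\Phi u)=\mathcal Q_f(v,u)$ only matches the radicals inside $C^\infty(M;\mathbb R^n)$ and $\Gamma(f^{-1}TN)$. A smooth element of the smooth-level radical does lie in the completed radical, by density. However, nothing in your argument shows that the completed radical consists of smooth sections. That would require a regularity theorem for the Jacobi operator, a non-elliptic system with symbol $|\xi_H|^2\operatorname{Id}+Z\otimes Z$, and no such result is available here.

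The missing step is supplied by your own pointwise identities. The equalities $|\Phi v|=|v|$ and $|\nabla_H\Phi v|^2=\sum_\alpha|\nabla^Hv_\alpha|^2$ say that $\Phi$ is an isometry for the weighted $\mathcal H^1_H$ norms. Hence $\Phi$ extends to an isometric isomorphism $\mathcal H^1_H(M;\mathbb R^n)\to\mathcal H^1_H$: its image is closed and contains the dense subspace $\Gamma(f^{-1}TN)$. By the boundedness in Proposition~\ref{prop:finiteness}(i) and density, this extension carries the continuous extension of $\mathcal Q_f$ onto that of $I_H$, and therefore maps radical onto radical. This is exactly how the paper concludes.
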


The two test spaces trade generality against precision. The first always exists but is used only through the trace of its coefficient matrix. The second may be trivial, but when the pullback bundle admits a global parallel orthonormal frame it captures the whole index. This case is realized in Section \ref{sec:heis}. On a closed Heisenberg nilmanifold $M=\Gamma\backslash\mathbb H^3$ we consider nonconstant exponentially subelliptic harmonic maps into $S^n(r)$ with constant horizontal energy density whose images are great circles. Such maps go back to a construction of Duan reported in \cite{CDE20}. There $\mathcal Q_f$ splits into a nonnegative tangential block and $n-1$ copies of the scalar form of $\mathcal L-\lambda$, where $\mathcal L$ is the sub-Laplacian, and the index is computed from the spectrum of $\mathcal L$.

\begin{theorem}\label{thm:D}
Let $n\ge2$, let $a,b\in\mathbb Z$ with $(a,b)\ne(0,0)$, and let $f:\Gamma\backslash\mathbb H^3\to S^n(r)$ be the map of Theorem \ref{thm:heis-family}. Put $\lambda:=4\pi^2(a^2+b^2)$ and let $N(\mu)$ be the number of eigenvalues of $\mathcal L$ smaller than $\mu$, counted with multiplicity. Then
\[
\ind(f)=(n-1)\,N(\lambda),
\qquad
\nul(f)=1+(n-1)\,\mult_{\mathcal L}(\lambda),
\]
and $\mult_{\mathcal L}(\lambda)\ge4$.
\end{theorem}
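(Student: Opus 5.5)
The plan is to reduce everything to Theorem \ref{thm:C} in its exact form. I take the map of Theorem \ref{thm:heis-family} to be, in suitable coordinates $(x,y,t)$ on $\mathbb H^3$ with horizontal frame $X,Y$ acting on $t$-independent functions as $\partial_x,\partial_y$, of the form $f=r(\cos\theta,\sin\theta,0,\dots,0)$ with $\theta=2\pi(ax+by)$. Then $|df_H|^2=r^2|\nabla^H\theta|^2=4\pi^2r^2(a^2+b^2)$, so $c$ is constant and the weight $e^{c}$ only rescales the form. The image is a great circle, which is a geodesic, so its unit tangent $W_1=(-\sin\theta,\cos\theta,0,\dots,0)$ is parallel along $f$. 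The constant unit vectors $W_\alpha=\varepsilon_{\alpha+1}$, $\alpha=2,\dots,n$, are normal to the circle and are also parallel in $f^{-1}TS^n(r)$. This gives $\dim P(f)=n$. Theorem \ref{thm:C} then yields $\ind(f)=\ind(\mathcal Q_f)$ and $\nul(f)=\nul(\mathcal Q_f)$.

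Next I compute the data entering $\mathcal Q_f$. First, $\langle W_1,df(e_i)\rangle=r\,e_i\theta$, so $\xi_1=r\nabla^H\theta$, and $\xi_\alpha=0$ for $\alpha\ge2$. Since $S^n(r)$ has constant curvature $1/r^2$ and each $df(e_i)$ is a multiple of $W_1$, we get $\rho_{11}=0$ and $\rho_{1\alpha}=0$. For $\alpha,\beta\ge2$ we get $\rho_{\alpha\beta}=\delta_{\alpha\beta}r^{-2}|df_H|^2=\delta_{\alpha\beta}\lambda$. Hence $\mathcal Q_f$ decouples as
\[
e^{-c}\mathcal Q_f(v)=\int_M\Bigl[r^2\langle\nabla^Hv_1,\nabla^H\theta\rangle^2+|\nabla^Hv_1|^2\Bigr]dv_g+\sum_{\alpha=2}^n\int_M\bigl(|\nabla^Hv_\alpha|^2-\lambda v_\alpha^2\bigr)dv_g .
\]
The tangential block is nonnegative. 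It vanishes only when $\nabla^Hv_1=0$, and since $H$ is bracket-generating and $M$ is connected, this forces $v_1$ to be constant. So the tangential block contributes index $0$ and nullity $1$. Each normal block is the form of $\mathcal L-\lambda$. Since $\mathcal L$ has discrete spectrum on the closed nilmanifold (the compact horizontal Sobolev embedding already used in Proposition \ref{prop:finiteness}), min--max on the completion in the horizontal Sobolev space gives index $N(\lambda)$ and nullity $\mult_{\mathcal L}(\lambda)$. Because the blocks are orthogonal, the index and nullity add, giving $\ind(f)=(n-1)N(\lambda)$ and $\nul(f)=1+(n-1)\mult_{\mathcal L}(\lambda)$. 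For this step I need to check that the index and nullity of $\mathcal Q_f$ computed on smooth $v$ agree with those on the completion, which is the same density argument as in Lemma \ref{lem:test-space}.

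For $\mult_{\mathcal L}(\lambda)\ge4$, I use $t$-independent functions $\cos(2\pi k\cdot(x,y))$ and $\sin(2\pi k\cdot(x,y))$ with $k\in\mathbb Z^2$. They are $\Gamma$-invariant for the standard lattice, and on them $\mathcal L=-(\partial_x^2+\partial_y^2)$, with eigenvalue $4\pi^2|k|^2$. Taking $k=(a,b)$ and $k=(-b,a)$ gives four functions with eigenvalue $\lambda$. They are linearly independent because $(-b,a)\ne\pm(a,b)$ when $(a,b)\ne(0,0)$.

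I expect the main obstacle to be bookkeeping rather than a new idea. I must match the normalizations of $X,Y$, the lattice $\Gamma$ and the radius $r$ fixed in Theorem \ref{thm:heis-family}, so that $r^{-2}|df_H|^2$ equals exactly $4\pi^2(a^2+b^2)$ and the trigonometric eigenfunctions descend to the quotient. I must also confirm that the sign conventions for $\tilde R_N$ in Theorem \ref{thm:C} give $\rho_{\alpha\alpha}=+\lambda$. If the paper's horizontal frame carries a different scaling, I will absorb it into both $\lambda$ and $\mathcal L$ consistently.
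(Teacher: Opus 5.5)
Your proposal is correct and is essentially the paper's proof. The parallel frame gives $k(f)=n$, Theorem~\ref{thm:C} reduces to the split form $\mathcal Q_0\oplus\mathcal Q_1^{\oplus(n-1)}$ with $\rho_{\alpha\beta}=\lambda\delta_{\alpha\beta}$ on the normal blocks, index and radical add over the blocks, and the modes with frequencies $(a,b)$ and $(-b,a)$ give $\mult_{\mathcal L}(\lambda)\ge4$. There are two small bookkeeping points. First, the density step you need is the one in Proposition~\ref{prop:finiteness}(iii), not Lemma~\ref{lem:test-space}; the paper avoids it because the eigenfunctions are smooth, so $\mathcal W_-$ already consists of smooth tuples. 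Second, on the completion the radical of the tangential block should be identified with $\ker\mathcal L$, which is the constants by hypoellipticity together with Lemma~\ref{lem:horiz-const}; Lemma~\ref{lem:horiz-const} alone applies only to smooth functions.
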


In particular $\ind(f)\ge n-1$ for every radius, and under the parallel trivialization the negative spectral subspace does not depend on $r$. By contrast, the bound of Corollary \ref{cor:index-sphere} decreases in $r$ and is vacuous once $4\pi^2r^2(a^2+b^2)\ge n-2$. The coefficient matrix of the extrinsic test space can be computed explicitly for this family (Remark \ref{rem:ext-matrix}). It has exactly $n-1$ negative eigenvalues for every $r$, which separates the loss due to using only the trace from the loss due to using a finite-dimensional test space. The isometries of the target generate a $(2n-1)$-dimensional subspace of the null space. The full nullity is larger, at least $4n-3$, owing to additional eigenfunctions of $\mathcal L$ with eigenvalue $\lambda$ (Remark \ref{rem:killing-check}). For fixed $n$ and $r$, the Weyl law for sub-Laplacians shows that the index grows like $\lambda^{Q/2}=\lambda^2$, where $Q=4$ is the homogeneous dimension (Remark \ref{rem:weyl}).

The present results extend the setting of \cite{Ch20,CDE20} to closed manifolds with bracket-generating distributions of arbitrary step and a fixed Riemannian extension. The finiteness argument uses closed quadratic forms and requires no boundary conditions. The extrinsic bound of Theorem \ref{thm:B} applies to every isometrically immersed target, and uses the negativity of an explicitly computed averaged trace rather than a pointwise tensor-positivity hypothesis. For spheres, the exponential stress-energy tensor used in \cite[Theorem 2.4]{Ch20} is
\[
S_e(f)=e^{s/2}\bigl(s\,g-(2+s)\,f^*h\bigr),\qquad s=|df|^2 .
\]
Its positive definiteness forces $s>0$ everywhere, since $S_e=0$ where $df=0$. Taking the trace over $(\ker df_x)^\perp$, on which the trace of $f^*h$ equals $s$, gives $\rank(df_x)\,s-(2+s)s>0$, that is, $s<\rank(df_x)-2\le n-2$. Hence, when $H=TM$, the hypothesis of \cite{Ch20} implies $\sup_M|df|^2<n-2$ and thus our condition $\bar s_w<n-2$. Under this hypothesis, the bound $n+1$ of \cite{Ch20} is stronger than ours, but our bound requires only the weaker integral condition. The analogous horizontal condition, positivity of $e^{c}\bigl(|df_H|^2g_H-(2+|df_H|^2)f^*h|_H\bigr)$ on $H$, would force in the same way $0<|df_H|^2<\rank(df_H)-2\le\rank H-2$. It is therefore never satisfied when $\rank H=2$, as for the Heisenberg group and for every three-dimensional contact manifold, whereas the integral condition of Theorem \ref{thm:B} is not subject to this restriction. Finally, for an explicit family on a Heisenberg nilmanifold we compute the index and the nullity exactly. The variation formulas of Section \ref{sec:variation} are recorded in the present framework for completeness.

The paper is organized as follows. Section \ref{sec:variation} develops the variational framework, proves finiteness of the index and nullity, and records the test-space principle and the null directions induced by Killing fields. Sections \ref{sec:ext} and \ref{sec:par} develop the extrinsic and parallel test spaces, respectively, and prove Theorems \ref{thm:A}, \ref{thm:B} and \ref{thm:C}; these sections depend only on Section \ref{sec:variation} and are independent of each other. Section \ref{sec:targets} gives applications to round spheres, products of spheres and convex hypersurfaces. Section \ref{sec:heis} proves Theorem \ref{thm:D} and compares the exact index with the extrinsic estimates. The Appendix identifies the Jacobi differential expression associated with the quadratic form in Proposition \ref{prop:finiteness} and computes its principal symbol.

\section{The Index Form of the Exponential Energy}\label{sec:variation}

\subsection{The Setting}

Throughout, $M$ is a closed connected manifold endowed with a subbundle $H\subset TM$ of rank $m$ and a Riemannian metric $g$. We write $g_H:=g|_H$ and call $g$ a Riemannian extension of $g_H$. The distribution $H$ is bracket-generating if, at every point $x$, iterated brackets of sections of $H$ span $T_xM$. For $x\in M$ let $\mathfrak s(x)$ be the least $r$ such that iterated brackets of length at most $r$ of sections of $H$ span $T_xM$, brackets of length $1$ being the sections of $H$ themselves. Compactness of $M$ implies that $\mathfrak s:=\max_{x\in M}\mathfrak s(x)$ is finite. We call $\mathfrak s$ the step. No equiregularity is assumed. Index ranges are indicated in every summation. The index \(i\) runs over the horizontal directions \(1,\dots,m\), and \(\nu\) over the vertical directions on \(M\). In Section \ref{sec:par}, \(\alpha,\beta\) run over the elements of a basis of \(P(f)\).

Let $\{e_i\}_{i=1}^m$ be a local orthonormal frame of $H$ and $\{e_\nu\}$ one of $\mathcal V:=H^\perp$, and put
\begin{equation}\label{eq:zeta}
\zeta:=\pi_H\Bigl(\sum_\nu\nabla_{e_\nu}e_\nu\Bigr),
\qquad
\nabla^Hu:=\pi_H(\nabla u)=\sum_{i=1}^m(e_iu)\,e_i\quad(u\in C^\infty(M)).
\end{equation}

\begin{remark}\label{rem:extension}
The orthogonal complement $\mathcal V$, the volume form $dv_g$, the Levi-Civita connection $\nabla$, the horizontal projection $\pi_H$ and the field $\zeta$ of \eqref{eq:zeta} depend on $g$ and not on the pair $(H,g_H)$ alone. Since $dv_g$ enters the definition of $E_H$, so do the functional itself, its critical points and their indices. All results below are therefore statements about the triple $(M,H,g)$. When $M$ carries a distinguished extension, as for the left-invariant metric on the Heisenberg nilmanifold of Section \ref{sec:heis}, that extension is understood.
\end{remark}

Let $(N^n,h)$ be a Riemannian manifold with Levi-Civita connection $\tilde\nabla$, and let $f:M\to N$ be smooth.  The indices \(A,B\) run over \(1,\dots,n\) for the tangent directions of \(N\), \(\gamma,\delta\) over \(1,\dots,p\) for the normal directions of an immersion of \(N\), and \(a,b\) over \(1,\dots,n+p\) for the ambient directions. We use the conventions
\[
\tilde R^N(X,Y)Z=\tilde\nabla_X\tilde\nabla_YZ-\tilde\nabla_Y\tilde\nabla_XZ-\tilde\nabla_{[X,Y]}Z,
\qquad
\tilde R_N(X,Y,Z,T):=\bigl\langle\tilde R^N(X,Y)Z,T\bigr\rangle,
\]
and write $\tilde\nabla$ also for the induced connection on $f^{-1}TN$. Put $df_H:=df\circ\pi_H$ and
\begin{equation}\label{eq:cdef}
c:=\tfrac12|df_H|^2,
\qquad
|\nabla_HW|^2:=\sum_{i=1}^m\bigl|\tilde\nabla_{e_i}W\bigr|^2,
\qquad
\mathcal R_T(W,W'):=\sum_{i=1}^m\tilde R_N\bigl(df(e_i),W,W',df(e_i)\bigr).
\end{equation}
The bilinear form $\mathcal R_T$ is symmetric. Indeed, the pair symmetry $\tilde R_N(X,Y,Z,T)=\tilde R_N(Z,T,X,Y)$ followed by the two skew-symmetries gives
\[
\tilde R_N(X,W,W',X)=\tilde R_N(W',X,X,W)=\tilde R_N(X,W',W,X).
\]
Moreover $\mathcal R_T$ depends on $df$ only through the symmetric tensor
\begin{equation}\label{eq:T-tensor}
T_x:=\sum_{i=1}^mdf(e_i)\otimes df(e_i),\qquad T_x(Z)=\sum_{i=1}^m\bigl\langle df(e_i),Z\bigr\rangle df(e_i),
\end{equation}
a self-adjoint positive semidefinite endomorphism of $T_{f(x)}N$ with $\tr T_x=|df_H|^2(x)$. The tensor $T$ does not depend on the horizontal frame. If $e_i'=\sum_jO_{ij}e_j$ with $O\in O(m)$, then $\sum_iO_{ij}O_{ik}=\delta_{jk}$ leaves $T$ unchanged.

The horizontal second fundamental form of $f$ is
\begin{equation}\label{eq:betaH}
\beta_H(f)(X,Y):=\tilde\nabla_Y\bigl(df_H(X)\bigr)-df_H(\nabla_YX),\qquad X,Y\in\Gamma(TM),
\end{equation}
so that
\begin{equation}\label{eq:betaH-diagonal}
\tilde\nabla_{e_i}df(e_i)=\beta_H(f)(e_i,e_i)+df\bigl(\pi_H(\nabla_{e_i}e_i)\bigr),\qquad i=1,\dots,m .
\end{equation}
Although $\beta_H(f)$ is defined on all of $TM\times TM$ and is not symmetric, only its horizontal diagonal values occur below, through \eqref{eq:betaH-diagonal}.

\begin{lemma}[Horizontal constancy]\label{lem:horiz-const}
Let $H$ be bracket-generating and let $f:M\to N$ be smooth with $df_H\equiv0$. Then $f$ is constant. Equivalently, if $f$ is nonconstant then $\{df_H\ne0\}$ is a nonempty open subset of $M$.
\end{lemma}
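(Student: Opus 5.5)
The plan is to show that $f$ is constant along every horizontal curve, and then to use the Chow--Rashevskii theorem to connect any two points of $M$ by such curves. Since $df_H\equiv0$, we have $df(X)=0$ for every $X\in\Gamma(H)$. Let $\gamma:[0,1]\to M$ be an absolutely continuous curve with $\dot\gamma(t)\in H_{\gamma(t)}$ for almost every $t$. Then $f\circ\gamma$ is absolutely continuous and $(f\circ\gamma)'(t)=df(\dot\gamma(t))=0$ almost everywhere. Working in coordinate charts of $N$ along a subdivision of $[0,1]$, this gives that $f\circ\gamma$ is constant. Alternatively, one may use only concatenations of flow lines of smooth horizontal vector fields, which makes the curves piecewise smooth and the argument elementary.

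Next, we need that any two points of the connected manifold $M$ can be joined by such horizontal curves. By the Chow--Rashevskii theorem, which applies because $H$ is bracket-generating, any two points can be joined by a piecewise smooth horizontal path, namely a concatenation of flow segments of local horizontal fields. This is a standard result.

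If one prefers a self-contained route, there is a more algebraic argument that avoids invoking Chow's theorem. The set $D:=\{X\in\Gamma(TM): df(X)=0\}$ contains $\Gamma(H)$. It is closed under Lie brackets, since for any function $u$ on $N$ we have $X(u\circ f)=du(df X)$, and so $X(u\circ f)=0=Y(u\circ f)$ implies $[X,Y](u\circ f)=0$. Hence $d(u\circ f)$ annihilates all iterated brackets of horizontal fields, which span $T_xM$ at every point by the bracket-generating assumption. Thus $d(u\circ f)=0$ for every locally defined smooth $u$. Taking $u$ to be coordinate functions of $N$ gives $df=0$ on the open preimage of each chart. So $df\equiv0$ on $M$, and $f$ is locally constant. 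Since $M$ is connected, $f$ is constant.

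I would present this bracket argument as the main proof, because it uses only the definition of bracket-generating in terms of iterated brackets of sections of $H$. The one point needing care, which I expect to be the only obstacle, is localization. Brackets are taken of local sections and $u$ is defined only on a chart of $N$, so I will restrict to the open set $f^{-1}(U)$ and use horizontal sections defined there, which suffice to span at each point. The equivalent reformulation follows at once: if $f$ is nonconstant, then $df_H\not\equiv0$, and $\{df_H\neq0\}$ is open by continuity of $df_H$.
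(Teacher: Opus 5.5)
Your proposal is correct. Your first sketch, constancy of $f\circ\gamma$ along horizontal curves combined with Chow--Rashevskii, is exactly the paper's proof. The argument you choose as your main proof takes a genuinely different route. Instead of a global connectivity theorem, it uses the infinitesimal fact that vector fields annihilated by $df$ (those $f$-related to $0$) are closed under brackets. They therefore contain all iterated brackets of horizontal fields, and hence span $T_xM$ at every point. This gives $df\equiv0$, and connectedness of $M$ finishes the proof. This route is more elementary and self-contained. It uses only the pointwise definition of bracket-generating and bypasses Chow--Rashevskii, whose proof is itself a nontrivial flow-commutator argument. Your localization concern is harmless: brackets are local, and restrictions of global sections of $H$ to $f^{-1}(U)$ already span at each point. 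What the paper's route buys is robustness in regularity. It uses only first derivatives of $f$ along curves, so it applies verbatim to $C^1$ maps. Your identity $[X,Y](u\circ f)=X\bigl(Y(u\circ f)\bigr)-Y\bigl(X(u\circ f)\bigr)$ uses second derivatives of $u\circ f$. For the smooth maps of the lemma, and for the smooth eigenfunctions of $\mathcal L$ to which the paper later applies it, this difference is immaterial.
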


\begin{proof}
Let $\gamma:[0,1]\to M$ be piecewise $C^1$ with $\dot\gamma(t)\in H_{\gamma(t)}$ for almost every $t$. Then
\[
\frac{d}{dt}\bigl(f\circ\gamma\bigr)(t)=df_{\gamma(t)}\bigl(\dot\gamma(t)\bigr)=df_H\bigl(\dot\gamma(t)\bigr)=0
\]
for almost every $t$, so $f\circ\gamma$ is constant. By the Chow--Rashevskii theorem \cite{Cho39,Mon02}, any two points of the connected manifold $M$ are joined by such a curve, whence $f$ is constant. The last assertion follows from the continuity of $|df_H|$.
\end{proof}

\subsection{Weighted Integration by Parts}

\begin{lemma}\label{lem:2.1}
Let $\psi\in C^\infty(M)$ and let $\alpha$ be a horizontal one-form, that is, $\alpha(X)=\alpha(\pi_HX)$ for all $X$. Then
\begin{equation}\label{eq:ibp}
\int_M\psi\Bigl[\sum_{i=1}^m e_i\alpha(e_i)-\sum_{i=1}^m\alpha\bigl(\pi_H(\nabla_{e_i}e_i)\bigr)\Bigr]dv_g
=\int_M\alpha\bigl(\psi\zeta-\nabla^H\psi\bigr)\,dv_g .
\end{equation}
\end{lemma}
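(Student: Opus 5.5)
The plan is to recognise the bracket on the left of \eqref{eq:ibp} as a Riemannian divergence of a horizontal vector field plus a correction involving $\zeta$, and then to apply the divergence theorem on the closed manifold $M$. Let $X_\alpha:=\sum_{i=1}^m\alpha(e_i)e_i\in\Gamma(H)$ be the metric dual of $\alpha$. Since $\alpha$ is horizontal, $\alpha(Y)=\langle X_\alpha,Y\rangle$ for every $Y\in\Gamma(TM)$. This vector field does not depend on the choice of horizontal frame, so $\psi X_\alpha$ is a global smooth vector field and $\int_M\divg(\psi X_\alpha)\,dv_g=0$.

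Next I will compute $\divg X_\alpha$ in the adapted frame $\{e_i\}\cup\{e_\nu\}$:
\[
\divg X_\alpha=\sum_i\langle\nabla_{e_i}X_\alpha,e_i\rangle+\sum_\nu\langle\nabla_{e_\nu}X_\alpha,e_\nu\rangle .
\]
For the horizontal part, I use $\langle X_\alpha,e_i\rangle=\alpha(e_i)$ and metric compatibility:
\[
\langle\nabla_{e_i}X_\alpha,e_i\rangle=e_i\alpha(e_i)-\langle X_\alpha,\nabla_{e_i}e_i\rangle=e_i\alpha(e_i)-\alpha\bigl(\pi_H(\nabla_{e_i}e_i)\bigr).
\]
The last equality holds because $X_\alpha$ is horizontal. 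For the vertical part, $\langle X_\alpha,e_\nu\rangle=0$, so
\[
\langle\nabla_{e_\nu}X_\alpha,e_\nu\rangle=-\langle X_\alpha,\nabla_{e_\nu}e_\nu\rangle .
\]
Summing over $\nu$ gives $-\alpha(\zeta)$ by \eqref{eq:zeta}. Hence the bracket on the left of \eqref{eq:ibp} equals $\divg X_\alpha+\alpha(\zeta)$.

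Finally, I multiply by $\psi$ and use $\psi\,\divg X_\alpha=\divg(\psi X_\alpha)-X_\alpha\psi$. Since $X_\alpha$ is horizontal, $X_\alpha\psi=\langle X_\alpha,\nabla\psi\rangle=\alpha(\nabla^H\psi)$. Integrating, the divergence term vanishes, which leaves
\[
\int_M\bigl(\psi\,\alpha(\zeta)-\alpha(\nabla^H\psi)\bigr)\,dv_g,
\]
and this is the right-hand side of \eqref{eq:ibp}.

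The only delicate point is that the local frames are defined only locally. I will handle it by noting that each side of \eqref{eq:ibp} is frame-independent pointwise. For the left side this follows from the divergence identity just derived, since $\divg X_\alpha$ and $\alpha(\zeta)$ are both intrinsic; the frame-independence of $\zeta$ itself follows from the same kind of argument. With that, no partition of unity is needed. Everything else is routine.
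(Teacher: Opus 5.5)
Your proposal is correct and is essentially the paper's own proof. Both identify the bracket as $\divg(\alpha^\sharp)+\alpha(\zeta)$ by computing in the adapted orthonormal frame $\{e_i\}\cup\{e_\nu\}$, and then integrate $\divg(\psi\alpha^\sharp)=\psi\,\divg(\alpha^\sharp)+\alpha(\nabla^H\psi)$ over the closed manifold $M$. Your closing remark on frame-independence goes beyond the paper, which leaves that point implicit, and it is harmless.
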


\begin{proof}
Let $\alpha^\sharp$ be the metric dual of $\alpha$. Computing the divergence in the orthonormal frame $\{e_i\}\cup\{e_\nu\}$,
\begin{align}
\divg(\alpha^\sharp)
&=\sum_{i=1}^m\bigl\langle\nabla_{e_i}\alpha^\sharp,e_i\bigr\rangle+\sum_\nu\bigl\langle\nabla_{e_\nu}\alpha^\sharp,e_\nu\bigr\rangle\notag\\
&=\sum_{i=1}^m\bigl[e_i\alpha(e_i)-\alpha(\nabla_{e_i}e_i)\bigr]+\sum_\nu\bigl[e_\nu\alpha(e_\nu)-\alpha(\nabla_{e_\nu}e_\nu)\bigr]\notag\\
&=\sum_{i=1}^m\bigl[e_i\alpha(e_i)-\alpha\bigl(\pi_H(\nabla_{e_i}e_i)\bigr)\bigr]-\alpha(\zeta).\label{eq:div-alpha}
\end{align}
In the last step we used $\alpha(e_\nu)=0$, $\alpha(Y)=\alpha(\pi_HY)$, and $\sum_\nu\alpha(\nabla_{e_\nu}e_\nu)=\alpha(\zeta)$. Furthermore $\alpha(\nabla\psi)=\alpha(\nabla^H\psi)$, so that
\[
\divg\bigl(\psi\alpha^\sharp\bigr)=\psi\,\divg(\alpha^\sharp)+\alpha\bigl(\nabla^H\psi\bigr).
\]
Integrating over the closed manifold $M$ gives $\int_M\psi\,\divg(\alpha^\sharp)\,dv_g=-\int_M\alpha(\nabla^H\psi)\,dv_g$, and substituting \eqref{eq:div-alpha} yields \eqref{eq:ibp}.
\end{proof}

The codifferential formula behind \eqref{eq:div-alpha} was previously obtained in \cite[Lemma 2.2]{CDY24} in an equivalent form. The computation is reproduced below for completeness.

\begin{corollary}\label{cor:2.2}
For every $U\in\Gamma(f^{-1}TN)$,
\begin{multline}\label{eq:ibp-U}
\int_M e^{c}\Bigl[\sum_{i=1}^m e_i\bigl\langle U,df_H(e_i)\bigr\rangle-\sum_{i=1}^m\bigl\langle U,df_H\bigl(\pi_H(\nabla_{e_i}e_i)\bigr)\bigr\rangle\Bigr]dv_g\\
=\int_M e^{c}\bigl\langle U,\,df(\zeta)-df_H(\nabla^Hc)\bigr\rangle\,dv_g .
\end{multline}
\end{corollary}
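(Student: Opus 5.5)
The plan is to apply Lemma \ref{lem:2.1} with the weight $\psi:=e^{c}$ and with the horizontal one-form
\[
\alpha(X):=\bigl\langle U,df_H(X)\bigr\rangle=\bigl\langle U,df(\pi_HX)\bigr\rangle,\qquad X\in\Gamma(TM).
\]
First I will check that $\alpha$ is an honest smooth one-form on $M$ and that it is horizontal. It is $C^\infty(M)$-linear in $X$. Since $\pi_H$ is idempotent, $\alpha(\pi_HX)=\alpha(X)$. With this choice the bracket on the left of \eqref{eq:ibp} becomes
\[
\sum_{i=1}^m e_i\langle U,df_H(e_i)\rangle-\sum_{i=1}^m\langle U,df_H(\pi_H(\nabla_{e_i}e_i))\rangle .
\]
This is exactly the integrand on the left of \eqref{eq:ibp-U} once it is multiplied by $e^{c}$.

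Next comes the right-hand side. Here $\alpha(\psi\zeta-\nabla^H\psi)=e^{c}\alpha(\zeta)-\alpha(\nabla^He^{c})$. The field $\zeta$ is horizontal by its definition \eqref{eq:zeta}, so $df_H(\zeta)=df(\zeta)$ and hence $\alpha(\zeta)=\langle U,df(\zeta)\rangle$. For the gradient term I use the chain rule $\nabla^He^{c}=\sum_i(e_ie^{c})e_i=e^{c}\nabla^Hc$, which is valid because $c=\tfrac12|df_H|^2$ is smooth. This gives $\alpha(\nabla^He^{c})=e^{c}\langle U,df_H(\nabla^Hc)\rangle$. Combining the two pieces, the right-hand side of \eqref{eq:ibp} equals $\int_Me^{c}\langle U,df(\zeta)-df_H(\nabla^Hc)\rangle\,dv_g$, which is \eqref{eq:ibp-U}.

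No step here is a real obstacle, since this is a direct specialization of Lemma \ref{lem:2.1}. The only point to watch is bookkeeping. The left side of \eqref{eq:ibp-U} is written with local frames, so one should note that it is frame-independent. This follows because it equals $\psi\bigl(\divg(\alpha^\sharp)+\alpha(\zeta)\bigr)$ by \eqref{eq:div-alpha}, and both terms are global quantities. With that observation the integral makes sense globally, and the proof is the two substitutions described above.
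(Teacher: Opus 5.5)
Your proposal is correct and follows essentially the same route as the paper: apply Lemma \ref{lem:2.1} with $\psi=e^{c}$ and $\alpha(X)=\langle U,df_H(X)\rangle$, then use $\nabla^He^{c}=e^{c}\nabla^Hc$ and $df_H(\zeta)=df(\zeta)$. Your added remark on frame-independence of the integrand, via \eqref{eq:div-alpha}, is a correct bit of bookkeeping that the paper leaves implicit.
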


\begin{proof}
Apply Lemma \ref{lem:2.1} with $\psi=e^{c}$ and the horizontal one-form $\alpha(X):=\langle U,df_H(X)\rangle$, noting that $\nabla^He^{c}=e^{c}\nabla^Hc$ and $df_H(\zeta)=df(\zeta)$.
\end{proof}

\subsection{First and Second Variation}

For $f:M\to N$ put
\[
E_H(f):=\int_Me^{c}\,dv_g=\int_Me^{|df_H|^2/2}\,dv_g .
\]

\begin{definition}
A critical point of $E_H$ is called an \emph{exponentially subelliptic harmonic map}.
\end{definition}

Every $W\in\Gamma(f^{-1}TN)$ is the variation field of a smooth variation of $f$. Indeed, putting $f_t(x):=\exp_{f(x)}(tW(x))$, the compactness of $M$ yields $\epsilon>0$ such that $(x,t)\mapsto f_t(x)$ is smooth on $M\times(-\epsilon,\epsilon)$, with $f_0=f$ and $\partial_tf_t|_{t=0}=W$. The variational conditions below may therefore be tested against arbitrary smooth sections.

\begin{theorem}\label{th:2.3}
Let $f_t$ be a smooth variation of $f$ with $W:=\partial_tf_t|_{t=0}$. Then
\[
\frac{d}{dt}E_H(f_t)\Bigr|_{t=0}=-\int_M e^{c}\bigl\langle W,\tau_H(f)\bigr\rangle\,dv_g,
\]
where
\begin{equation}\label{eq:tension}
\tau_H(f):=\sum_{i=1}^m\beta_H(f)(e_i,e_i)-df(\zeta)+df_H(\nabla^Hc)
\end{equation}
is the exponential tension field. Hence $f$ is exponentially subelliptic harmonic if and only if $\tau_H(f)=0$.
\end{theorem}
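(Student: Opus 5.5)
The plan is to differentiate $E_H(f_t)=\int_M e^{c_t}\,dv_g$ under the integral sign, with $c_t=\tfrac12|d(f_t)_H|^2$. This is legitimate because $M$ is closed and $(x,t)\mapsto f_t(x)$ is smooth. Regard $F(x,t)=f_t(x)$ as a map $M\times(-\epsilon,\epsilon)\to N$ and use the pullback connection on $F^{-1}TN$. The symmetry lemma $\tilde\nabla_{\partial_t}dF(e_i)=\tilde\nabla_{e_i}dF(\partial_t)$ holds because $[\partial_t,e_i]=0$ and $\tilde\nabla$ is torsion free. It gives
\[
\frac{d}{dt}\Big|_{t=0}c_t=\sum_{i=1}^m\bigl\langle\tilde\nabla_{e_i}W,df(e_i)\bigr\rangle .
\]
Hence
\[
\frac{d}{dt}E_H(f_t)\Big|_{t=0}=\int_M e^{c}\sum_{i=1}^m\bigl\langle\tilde\nabla_{e_i}W,df_H(e_i)\bigr\rangle\,dv_g .
\]

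Next, use metric compatibility to write
\[
\langle\tilde\nabla_{e_i}W,df_H(e_i)\rangle=e_i\langle W,df_H(e_i)\rangle-\langle W,\tilde\nabla_{e_i}df(e_i)\rangle .
\]
Then substitute \eqref{eq:betaH-diagonal}:
\[
\tilde\nabla_{e_i}df(e_i)=\beta_H(f)(e_i,e_i)+df_H(\pi_H(\nabla_{e_i}e_i)).
\]
The integrand becomes
\[
e^{c}\Bigl[\sum_i e_i\langle W,df_H(e_i)\rangle-\sum_i\langle W,df_H(\pi_H\nabla_{e_i}e_i)\rangle\Bigr]-e^{c}\Bigl\langle W,\sum_i\beta_H(f)(e_i,e_i)\Bigr\rangle .
\]
The bracketed term is exactly the left side of Corollary \ref{cor:2.2} with $U=W$. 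Its integral therefore equals $\int_M e^{c}\langle W,df(\zeta)-df_H(\nabla^Hc)\rangle\,dv_g$. Combining the two pieces gives
\[
-\int_M e^{c}\Bigl\langle W,\sum_i\beta_H(f)(e_i,e_i)-df(\zeta)+df_H(\nabla^Hc)\Bigr\rangle\,dv_g,
\]
which is the claimed formula with $\tau_H(f)$ as in \eqref{eq:tension}.

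The local expressions above are written in a frame, so frame-independence needs a check. The integrand $\sum_i\langle\tilde\nabla_{e_i}W,df(e_i)\rangle$ is invariant under $O(m)$ changes of frame, by the same computation as for $T$. Hence the global integrals make sense. Corollary \ref{cor:2.2} already handles the patching, since it rests on a global divergence, and $\tau_H(f)$ is then globally defined as the unique section representing the first variation. I expect this bookkeeping, together with the symmetry lemma, to be the only real care point. It is where one must note that $\pi_H(\nabla_{e_i}e_i)$ appears, rather than $\nabla_{e_i}e_i$, because $df_H=df\circ\pi_H$.

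For the final equivalence, every smooth section $W$ arises as a variation field via $f_t=\exp_f(tW)$, as remarked before the theorem. If $\tau_H(f)\neq0$, take $W=\tau_H(f)$. The derivative is then $-\int_M e^{c}|\tau_H(f)|^2\,dv_g<0$, because $e^{c}>0$. So $f$ is critical if and only if $\tau_H(f)=0$; the converse direction is immediate from the formula.
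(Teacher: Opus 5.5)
Your proposal is correct and follows the paper's proof essentially step for step. You differentiate under the integral using $\tilde\nabla_{\partial_t}dF(e_i)=\tilde\nabla_{e_i}dF(\partial_t)$, expand by metric compatibility and \eqref{eq:betaH-diagonal}, and convert the divergence-type terms with Corollary \ref{cor:2.2} for $U=W$; your explicit choice $W=\tau_H(f)$ and your frame-independence remark only spell out what the paper leaves implicit.
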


\begin{proof}
Let $F(x,t):=f_t(x)$ and $W_t:=dF(\partial_t)$, so that $W_0=W$. Extend $\{e_i\}$ to $M\times(-\epsilon,\epsilon)$ independently of $t$, so that $[e_i,\partial_t]=0$. Since $\tilde\nabla$ is torsion free, $\tilde\nabla_{\partial_t}dF(e_i)=\tilde\nabla_{e_i}W_t$, and differentiating under the integral sign gives
\begin{equation}\label{eq:2.10}
\frac{d}{dt}E_H(f_t)=\int_M e^{\frac12|d(f_t)_H|^2}\sum_{i=1}^m\bigl\langle\tilde\nabla_{e_i}W_t,dF(e_i)\bigr\rangle\,dv_g .
\end{equation}
At $t=0$, metric compatibility of $\tilde\nabla$ and \eqref{eq:betaH-diagonal} give
\begin{align}
\sum_{i=1}^m\bigl\langle\tilde\nabla_{e_i}W,df(e_i)\bigr\rangle
={}&\sum_{i=1}^m e_i\bigl\langle W,df_H(e_i)\bigr\rangle-\Bigl\langle W,\sum_{i=1}^m\tilde\nabla_{e_i}df(e_i)\Bigr\rangle\notag\\
={}&\sum_{i=1}^m e_i\bigl\langle W,df_H(e_i)\bigr\rangle-\sum_{i=1}^m\bigl\langle W,df_H\bigl(\pi_H(\nabla_{e_i}e_i)\bigr)\bigr\rangle\notag\\
&-\Bigl\langle W,\sum_{i=1}^m\beta_H(f)(e_i,e_i)\Bigr\rangle .\label{eq:F-expand}
\end{align}
Multiply by $e^{c}$ and integrate. Corollary \ref{cor:2.2} with $U=W$ turns the first two terms into $\int_Me^{c}\langle W,df(\zeta)-df_H(\nabla^Hc)\rangle\,dv_g$, which gives the stated formula. The final assertion follows from the arbitrariness of $W$.
\end{proof}

\begin{corollary}\label{cor:F-vanishes}
Let $f$ be exponentially subelliptic harmonic and put
\[
\mathcal F(U):=\sum_{i=1}^m\bigl\langle\tilde\nabla_{e_i}U,df(e_i)\bigr\rangle,\qquad U\in\Gamma(f^{-1}TN).
\]
Then $\int_Me^{c}\,\mathcal F(U)\,dv_g=0$ for every $U\in\Gamma(f^{-1}TN)$.
\end{corollary}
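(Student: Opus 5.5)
The argument reuses the first-variation computation in the proof of Theorem \ref{th:2.3}, evaluated at $t=0$ but now with $U$ in place of the variation field. First, apply the pointwise identity \eqref{eq:F-expand} with $W$ replaced by $U$:
\[
\mathcal F(U)=\sum_{i=1}^m e_i\bigl\langle U,df_H(e_i)\bigr\rangle-\sum_{i=1}^m\bigl\langle U,df_H\bigl(\pi_H(\nabla_{e_i}e_i)\bigr)\bigr\rangle-\Bigl\langle U,\sum_{i=1}^m\beta_H(f)(e_i,e_i)\Bigr\rangle .
\]
This expansion uses only metric compatibility of $\tilde\nabla$ and \eqref{eq:betaH-diagonal}, so it holds for any section $U$.

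Next, multiply by $e^{c}$ and integrate over $M$. The first two terms are exactly the left-hand side of \eqref{eq:ibp-U}, so Corollary \ref{cor:2.2} turns them into $\int_Me^{c}\langle U,df(\zeta)-df_H(\nabla^Hc)\rangle\,dv_g$. Collecting terms gives
\[
\int_Me^{c}\,\mathcal F(U)\,dv_g=-\int_Me^{c}\bigl\langle U,\tau_H(f)\bigr\rangle\,dv_g,
\]
with $\tau_H(f)$ as in \eqref{eq:tension}.

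Finally, since $f$ is exponentially subelliptic harmonic, Theorem \ref{th:2.3} gives $\tau_H(f)=0$, and the right-hand side vanishes. Alternatively, one can bypass the explicit formula. Take the variation $f_t=\exp_f(tU)$, which is admissible by the remark preceding Theorem \ref{th:2.3}, and note that \eqref{eq:2.10} at $t=0$ reads $\frac{d}{dt}E_H(f_t)|_{t=0}=\int_Me^{c}\mathcal F(U)\,dv_g$, which vanishes by criticality.

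No step is a genuine obstacle. The only points to check are that the frame-dependent terms are well defined and that \eqref{eq:F-expand} was derived with no condition on $W$, so it applies to an arbitrary section $U$. One should also confirm that $\mathcal F(U)$ is independent of the choice of local orthonormal frame $\{e_i\}$, which makes the integral meaningful. This holds because $\sum_i e_i\otimes e_i$ is invariant under an $O(m)$ change of frame, exactly as for the tensor $T$ in \eqref{eq:T-tensor}.
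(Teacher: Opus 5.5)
Your proposal is correct and follows the paper's proof: you replace $W$ by $U$ in \eqref{eq:F-expand}, multiply by $e^{c}$, integrate, and apply Corollary~\ref{cor:2.2} to get $-\int_Me^{c}\langle U,\tau_H(f)\rangle\,dv_g=0$. Your alternative is also valid: reading \eqref{eq:2.10} at $t=0$ along $f_t=\exp_f(tU)$ identifies $\int_Me^{c}\mathcal F(U)\,dv_g$ with the first variation, which vanishes by criticality.
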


\begin{proof}
Multiply \eqref{eq:F-expand}, with $W$ replaced by $U$, by $e^{c}$ and integrate. By Corollary \ref{cor:2.2} the right-hand side becomes $-\int_Me^{c}\langle U,\tau_H(f)\rangle\,dv_g$, which vanishes since $\tau_H(f)=0$.
\end{proof}

\begin{theorem}\label{th:2.4}
Let $f$ be exponentially subelliptic harmonic, and let $h:M\times(-\epsilon,\epsilon)^2\to N$ be smooth with $h(\cdot,0,0)=f$. Put $W:=\partial_th|_{(0,0)}$ and $W':=\partial_sh|_{(0,0)}$. Then
\begin{equation}\label{eq:2ndvar}
\frac{\partial^2}{\partial t\,\partial s}E_H\bigl(h(\cdot,t,s)\bigr)\Bigr|_{(0,0)}
=\int_Me^{c}\Bigl[\mathcal F(W)\mathcal F(W')+\sum_{i=1}^m\bigl\langle\tilde\nabla_{e_i}W,\tilde\nabla_{e_i}W'\bigr\rangle-\mathcal R_T(W,W')\Bigr]dv_g .
\end{equation}
\end{theorem}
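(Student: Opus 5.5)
We plan to differentiate the first-variation identity \eqref{eq:2.10} once more, in the second parameter, and then use the criticality of $f$ to remove the term involving the second variation field. Write $H(x,t,s):=h(x,t,s)$, $W_{t,s}:=dH(\partial_t)$ and $W'_{t,s}:=dH(\partial_s)$. As in the proof of Theorem \ref{th:2.3}, we extend $\{e_i\}$ to $M\times(-\epsilon,\epsilon)^2$ independently of $(t,s)$, so that $[e_i,\partial_t]=[e_i,\partial_s]=[\partial_t,\partial_s]=0$. The computation of Theorem \ref{th:2.3}, carried out at every $(t,s)$, gives
\[
\partial_tE_H\bigl(h(\cdot,t,s)\bigr)=\int_Me^{c_{t,s}}\sum_{i=1}^m\bigl\langle\tilde\nabla_{e_i}W_{t,s},dH(e_i)\bigr\rangle\,dv_g,
\qquad c_{t,s}:=\tfrac12\sum_{i=1}^m|dH(e_i)|^2 .
\]

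We then apply $\partial_s$ under the integral sign and evaluate at $(0,0)$. Torsion-freeness gives $\tilde\nabla_{\partial_s}dH(e_i)=\tilde\nabla_{e_i}W'$. Hence $\partial_sc|_{(0,0)}=\mathcal F(W')$, and differentiating the exponential weight produces $e^{c}\mathcal F(W)\mathcal F(W')$. Differentiating the inner product $\langle\tilde\nabla_{e_i}W,dH(e_i)\rangle$ gives $\langle\tilde\nabla_{e_i}W,\tilde\nabla_{e_i}W'\rangle$ plus $\langle\tilde\nabla_{\partial_s}\tilde\nabla_{e_i}W,df(e_i)\rangle$. We commute the covariant derivatives using the curvature of the pullback connection:
\[
\tilde\nabla_{\partial_s}\tilde\nabla_{e_i}W=\tilde\nabla_{e_i}\tilde\nabla_{\partial_s}W+\tilde R^N(W',df(e_i))W ,
\]
where the bracket term vanishes because $[\partial_s,e_i]=0$. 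The curvature contribution is $\tilde R_N(W',df(e_i),W,df(e_i))=-\tilde R_N(df(e_i),W',W,df(e_i))$. Summing over $i$ gives $-\mathcal R_T(W',W)=-\mathcal R_T(W,W')$, since $\mathcal R_T$ is symmetric.

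The remaining term is $\int_Me^{c}\sum_i\langle\tilde\nabla_{e_i}U,df(e_i)\rangle\,dv_g=\int_Me^{c}\mathcal F(U)\,dv_g$, where $U:=\tilde\nabla_{\partial_s}W|_{(0,0)}\in\Gamma(f^{-1}TN)$ is the second variation field. This term is the main point to handle, because $U$ depends on the particular two-parameter variation and not only on $W$ and $W'$. Corollary \ref{cor:F-vanishes} applies to any smooth section $U$ of $f^{-1}TN$ and shows that this integral vanishes because $f$ is exponentially subelliptic harmonic. The three surviving terms then give exactly \eqref{eq:2ndvar}.

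Two technical points remain. Differentiation under the integral sign is justified by smoothness on $M\times(-\epsilon,\epsilon)^2$ and compactness of $M$. The integrand is defined frame by frame, but it is frame-independent, like $T$, so the local computations patch together and no partition of unity is needed in the final formula.
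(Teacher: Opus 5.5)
Your proposal is correct and follows the paper's proof essentially step for step. You differentiate the first-variation integrand once more, use torsion-freeness and the curvature identity for the commuting pair $(e_i,\partial_s)$, and remove the term $\int_Me^{c}\mathcal F\bigl(\tilde\nabla_{\partial_s}\partial_th|_{(0,0)}\bigr)\,dv_g$ with Corollary \ref{cor:F-vanishes}. The only difference is that you differentiate in $t$ first and then in $s$, the reverse of the paper's order. This is immaterial because the mixed partials of the smooth function $(t,s)\mapsto E_H(h(\cdot,t,s))$ commute, and the resulting formula is symmetric in $W,W'$.
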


\begin{proof}
Write $c_h:=\frac12|dh_H|^2$ and extend $\{e_i\}$ independently of $(t,s)$. As in \eqref{eq:2.10},
\[
\partial_sE_H(h)=\int_Me^{c_h}\sum_{i=1}^m\bigl\langle\tilde\nabla_{e_i}\partial_sh,dh(e_i)\bigr\rangle\,dv_g .
\]
Now differentiate in $t$. Since $[e_i,\partial_t]=0$ and $\tilde\nabla$ is torsion free, $\tilde\nabla_{\partial_t}dh(e_i)=\tilde\nabla_{e_i}\partial_th$. Consequently
\[
\partial_tc_h=\sum_{i=1}^m\bigl\langle\tilde\nabla_{e_i}\partial_th,dh(e_i)\bigr\rangle,
\]
which at $(0,0)$ equals $\mathcal F(W)$, and
\[
\partial_t\bigl\langle\tilde\nabla_{e_i}\partial_sh,dh(e_i)\bigr\rangle
=\bigl\langle\tilde\nabla_{\partial_t}\tilde\nabla_{e_i}\partial_sh,dh(e_i)\bigr\rangle+\bigl\langle\tilde\nabla_{e_i}\partial_sh,\tilde\nabla_{e_i}\partial_th\bigr\rangle .
\]
The curvature convention applied to the commuting pair $(e_i,\partial_t)$ gives
\[
\tilde\nabla_{\partial_t}\tilde\nabla_{e_i}\partial_sh=\tilde\nabla_{e_i}Z-\tilde R^N\bigl(dh(e_i),\partial_th\bigr)\partial_sh,
\qquad Z:=\tilde\nabla_{\partial_t}\partial_sh .
\]
Evaluating at $(0,0)$ we obtain
\begin{align}
\frac{\partial^2E_H(h)}{\partial t\,\partial s}\Bigr|_{(0,0)}
={}&\int_Me^{c}\Bigl[\mathcal F(W)\mathcal F(W')+\sum_{i=1}^m\bigl\langle\tilde\nabla_{e_i}W',\tilde\nabla_{e_i}W\bigr\rangle-\mathcal R_T(W,W')\Bigr]dv_g\notag\\
&+\int_Me^{c}\,\mathcal F\bigl(Z|_{(0,0)}\bigr)\,dv_g .
\end{align}
The last integral vanishes by Corollary \ref{cor:F-vanishes}, and \eqref{eq:2ndvar} follows.
\end{proof}

\subsection{The Index Form and its Finiteness}

\begin{definition}\label{def:index-form}
For an exponentially subelliptic harmonic map $f$, the \emph{index form} $I_H$ is the symmetric bilinear form on $\Gamma(f^{-1}TN)$ given by the right-hand side of \eqref{eq:2ndvar}:
\begin{equation}\label{eq:index-polarized}
I_H(W,W'):=\int_Me^{c}\Bigl[\mathcal F(W)\mathcal F(W')+\sum_{i=1}^m\bigl\langle\tilde\nabla_{e_i}W,\tilde\nabla_{e_i}W'\bigr\rangle-\mathcal R_T(W,W')\Bigr]dv_g .
\end{equation}
The map $f$ is \emph{stable} if $I_H(W,W)\ge0$ for all $W$. Its \emph{index} $\ind(f)\in\{0,1,2,\dots\}\cup\{+\infty\}$ is the supremum of the dimensions of the linear subspaces of $\Gamma(f^{-1}TN)$ on which $I_H$ is negative definite.
\end{definition}

By Theorem \ref{th:2.4} with $h(x,t,s)=f_{t+s}(x)$, one has $I_H(W,W)=\frac{d^2}{dt^2}E_H(f_t)|_{t=0}$ for any variation with field $W$. If $K^N\le0$ then $\mathcal R_T(W,W)\le0$, while the first two terms of \eqref{eq:index-polarized} with $W'=W$ are nonnegative. Hence every exponentially subelliptic harmonic map into a manifold of nonpositive sectional curvature is stable.

Let $L^2_c:=L^2(f^{-1}TN,e^{c}dv_g)$ with inner product $\langle\!\langle W,W'\rangle\!\rangle:=\int_Me^{c}\langle W,W'\rangle\,dv_g$, and let $\mathcal H^1_H$ be the completion of $\Gamma(f^{-1}TN)$ with respect to
\[
\|W\|_{\mathcal H^1_H}^2:=\int_Me^{c}\bigl(|\nabla_HW|^2+|W|^2\bigr)\,dv_g .
\]
The same notation is used for scalar and vector-valued functions and for sections of other Riemannian vector bundles with metric connection.

The completion embeds naturally in $L^2$. Indeed, let $W_j$ be smooth with $W_j\to0$ in $L^2$ and $\tilde\nabla_{e_i}W_j\to G_i$ in $L^2$ over the domain of a local frame, for each $i$. For every smooth section $\Phi$ with compact support in that domain, integration by parts gives
\[
\int\bigl\langle\tilde\nabla_{e_i}W_j,\Phi\bigr\rangle\,dv_g=-\int\bigl\langle W_j,\tilde\nabla_{e_i}\Phi+(\divg e_i)\,\Phi\bigr\rangle\,dv_g\longrightarrow0,
\]
so $G_i=0$. Hence the operator $W\mapsto(\tilde\nabla_{e_i}W)_i$ is closable, and $\mathcal H^1_H$ is identified with a subspace of $L^2_c$.

\begin{lemma}\label{lem:compact-embed}
Let $H$ be bracket-generating of step $\mathfrak s$, and let $\mathcal E\to M$ be a Riemannian vector bundle with metric connection $\nabla^{\mathcal E}$. Then the inclusion $\mathcal H^1_H(\mathcal E)\hookrightarrow L^2(\mathcal E)$ is compact.
\end{lemma}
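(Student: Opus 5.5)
Our plan is to reduce the compactness to the scalar subelliptic estimate of H\"ormander--Rothschild--Stein type, applied componentwise after a local trivialization. The weight $e^{c}$ is smooth and bounded above and below by positive constants on the closed manifold $M$, so the norms with and without the weight are equivalent and we may work with $dv_g$ throughout.

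First we localize. We cover $M$ by finitely many open sets $U_k$ on which $H$ has an orthonormal frame $\{e_i\}$ and $\mathcal E$ has an orthonormal frame $\{\sigma_A\}$, and we take a subordinate partition of unity $\{\chi_k\}$. For a smooth section $W$ we write $\chi_kW=\sum_Au_A^{(k)}\sigma_A$ with $u_A^{(k)}\in C_c^\infty(U_k)$. Then
\[
e_iu_A^{(k)}=\bigl\langle\nabla^{\mathcal E}_{e_i}(\chi_kW),\sigma_A\bigr\rangle-\bigl\langle\chi_kW,\nabla^{\mathcal E}_{e_i}\sigma_A\bigr\rangle,
\]
and $|e_i\chi_k|$ is bounded. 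Hence
\[
\sum_{i,A}\|e_iu^{(k)}_A\|_{L^2}+\|u^{(k)}_A\|_{L^2}\le C\|W\|_{\mathcal H^1_H}.
\]
By density and the closability established above, the same bound holds for $W\in\mathcal H^1_H(\mathcal E)$, with $e_iu$ understood distributionally. This reduces the problem to scalar functions supported in a fixed compact subset of a coordinate chart.

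The key step is the subelliptic estimate. Since $\{e_i\}$ is bracket-generating of step at most $\mathfrak s$, H\"ormander's theorem (in the sharp form of Rothschild--Stein, or Folland for general step without equiregularity) gives
\[
\|u\|_{H^{\varepsilon}}\le C\Bigl(\sum_i\|e_iu\|_{L^2}+\|u\|_{L^2}\Bigr)
\]
for all $u\in C_c^\infty(K_k)$, with $\varepsilon=1/\mathfrak s$. Here $K_k\subset U_k$ is compact and $H^\varepsilon$ is the Euclidean Sobolev space. This is the step we expect to carry the weight of the proof. We will cite it rather than reprove it, taking care only that the estimate is stated for the vector fields $e_i$ themselves; $\varepsilon$ may be replaced by any smaller positive number. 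The estimate extends to the closure of $C_c^\infty(K_k)$ in the norm on the right.

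Finally, let $W_j$ be bounded in $\mathcal H^1_H(\mathcal E)$. Then each sequence $u^{(k)}_{A,j}$ is bounded in $H^\varepsilon$ and supported in $K_k$. By the Rellich lemma for $H^\varepsilon$ with compact support, a diagonal subsequence converges in $L^2$ for every $k$ and $A$. Since $W_j=\sum_k\chi_kW_j$, the subsequence $W_j$ converges in $L^2(\mathcal E)$. The equivalence of the weighted and unweighted norms then gives the claim.
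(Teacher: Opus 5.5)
Your proposal is correct and follows essentially the same route as the paper: trivialize $\mathcal E$ over a finite cover, bound the localized scalar components by the $\mathcal H^1_H$ norm, apply the Rothschild--Stein subelliptic estimate, and conclude by Rellich. The differences are cosmetic: you use the local estimate in coordinate charts with Euclidean $H^{\varepsilon}$ and $\sum_k\chi_k=1$, whereas the paper uses the global estimate on $M$ with $\sum_\rho\chi_\rho^2=1$; also, by metric compatibility the last term in your formula for $e_iu_A^{(k)}$ should carry a plus sign, which does not affect the bound.
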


\begin{proof}
Since $M$ is closed, the weight $e^{c}$ is bounded between two positive constants and may be ignored. Here $|\nabla_HW|^2=\sum_i|\nabla^{\mathcal E}_{e_i}W|^2$. Choose finitely many open sets $U_\rho$ covering $M$, each with closure contained in an open set $U_\rho'$ over which $\mathcal E$ has an orthonormal frame $\{\sigma^\rho_l\}_{l=1}^q$, and functions $\chi_\rho\in C^\infty_c(U_\rho)$ with $\sum_\rho\chi_\rho^2=1$. On $U_\rho'$ write $W=\sum_lW^{(l)}\sigma^\rho_l$. Then
\[
\nabla^{\mathcal E}_{e_i}W=\sum_{l=1}^q\bigl(e_iW^{(l)}\bigr)\sigma^\rho_l+\sum_{l=1}^qW^{(l)}\,\nabla^{\mathcal E}_{e_i}\sigma^\rho_l ,
\]
and the coefficients $\nabla^{\mathcal E}_{e_i}\sigma^\rho_l$ are bounded on $\overline U_\rho$. The inequality $|u+v|^2\ge\frac12|u|^2-|v|^2$ gives a constant $C_\rho$ with
\[
|\nabla_HW|^2\ \ge\ \tfrac12\sum_{l=1}^q|\nabla^HW^{(l)}|^2-C_\rho|W|^2\qquad\text{on }\overline U_\rho .
\]
Since $\nabla^H(\chi_\rho W^{(l)})=\chi_\rho\nabla^HW^{(l)}+W^{(l)}\nabla^H\chi_\rho$, it follows that
\[
\|\nabla^H(\chi_\rho W^{(l)})\|_{L^2(M)}+\|\chi_\rho W^{(l)}\|_{L^2(M)}\le C\|W\|_{\mathcal H^1_H}
\]
for every pair $(\rho,l)$. Because $H$ is bracket-generating of step $\mathfrak s$, the subelliptic estimate of Rothschild and Stein \cite{RS76} gives, for scalar functions,
\[
\|u\|_{W^{1/\mathfrak s,2}(M)}\le C\bigl(\|\nabla^Hu\|_{L^2(M)}+\|u\|_{L^2(M)}\bigr),\qquad u\in C^\infty(M),
\]
where $W^{1/\mathfrak s,2}(M)$ is the fractional Sobolev space of $(M,g)$. By density, the estimate extends to the completion. Hence a bounded sequence in $\mathcal H^1_H(\mathcal E)$ yields, for each of the finitely many pairs $(\rho,l)$, a bounded sequence in $W^{1/\mathfrak s,2}(M)$. By Rellich's theorem on the closed manifold $M$, after finitely many extractions all the $\chi_\rho W^{(l)}$ converge in $L^2(M)$. Then $\chi_\rho W=\sum_l(\chi_\rho W^{(l)})\sigma^\rho_l$ converges in $L^2(\mathcal E)$ for each $\rho$, and so does $W=\sum_\rho\chi_\rho(\chi_\rho W)$.
\end{proof}

\begin{proposition}\label{prop:finiteness}
Let $f$ be an exponentially subelliptic harmonic map from a closed manifold with $H$ bracket-generating. Put
\[
\kappa_f:=\max\Bigl\{0,\ \sup\bigl\{K^N(\pi):\pi\subset T_qN\ \text{a plane},\ q\in f(M)\bigr\}\Bigr\},
\qquad
C_1:=\kappa_f\sup_M|df_H|^2 ,
\]
with the convention $\kappa_f=0$ if $\dim N=1$. Then:
\begin{enumerate}
\item[\rm(i)] $I_H(W,W)\ge\int_Me^{c}|\nabla_HW|^2dv_g-C_1\|W\|^2_{L^2_c}$ for all $W$, and $I_H$ extends to a bounded symmetric bilinear form on $\mathcal H^1_H$;
\item[\rm(ii)] $a(W,W'):=I_H(W,W')+C_1\langle\!\langle W,W'\rangle\!\rangle$ is a closed nonnegative form on $L^2_c$ with form domain $\mathcal H^1_H$, and the associated nonnegative self-adjoint operator $A$ has compact resolvent;
\item[\rm(iii)] $\ind(f)=\#\{j:\mu_j<C_1\}<\infty$, where $\mu_0\le\mu_1\le\cdots\to\infty$ are the eigenvalues of $A$ repeated according to multiplicity, and the nullity
\[
\nul(f):=\dim\bigl\{W\in\mathcal H^1_H:\ I_H(W,W')=0\ \text{for all }W'\in\mathcal H^1_H\bigr\}=\dim\ker(A-C_1)
\]
is finite.
\end{enumerate}
\end{proposition}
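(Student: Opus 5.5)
The proof is organized at the level of quadratic forms, in three steps matching (i)–(iii).

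\emph{Step (i).} We bound the curvature term pointwise. Fix $x$ and $W\in T_{f(x)}N$. For each $i$, the quantity $\tilde R_N(df(e_i),W,W,df(e_i))$ equals $K^N(\pi_i)\,\bigl(|df(e_i)|^2|W|^2-\langle df(e_i),W\rangle^2\bigr)$ when $df(e_i)$ and $W$ are independent, and vanishes otherwise. Here $\pi_i$ is the plane they span. The bracket is nonnegative and at most $|df(e_i)|^2|W|^2$, so the term is at most $\kappa_f|df(e_i)|^2|W|^2$. Summing over $i$ gives $\mathcal R_T(W,W)\le\kappa_f|df_H|^2|W|^2\le C_1|W|^2$; this is trivial if $\dim N=1$. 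Since $\mathcal F(W)^2\ge0$, this yields the lower bound in (i). For boundedness we use the estimates $|\mathcal F(W)|\le|df_H|\,|\nabla_HW|$ (Cauchy--Schwarz), $|\mathcal R_T(W,W')|\le C|W||W'|$, and the fact that $e^{c}$ is bounded. These give $|I_H(W,W')|\le C\|W\|_{\mathcal H^1_H}\|W'\|_{\mathcal H^1_H}$ on smooth sections. The form therefore extends by continuity to $\mathcal H^1_H$, and the extension is symmetric.

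\emph{Step (ii).} By (i), $a(W,W)\ge\int e^{c}|\nabla_HW|^2$, so $a$ is nonnegative. Adding $\|W\|^2_{L^2_c}$ to both sides gives $a(W,W)+\|W\|^2_{L^2_c}\ge\|W\|^2_{\mathcal H^1_H}$, while boundedness gives the reverse inequality up to a constant. Hence the form norm of $a$ is equivalent to $\|\cdot\|_{\mathcal H^1_H}$. Because $\mathcal H^1_H$ is complete and embeds in $L^2_c$ (the closability argument preceding the proposition), $a$ is closed with this domain. The domain is dense because it contains the smooth sections. The representation theorem for closed forms (Kato) then produces the nonnegative self-adjoint operator $A$. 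Compactness of the resolvent follows from Lemma~\ref{lem:compact-embed} applied to $\mathcal E=f^{-1}TN$ with the pullback connection. Indeed, $(A+1)^{-1/2}$ maps $L^2_c$ isometrically onto $\mathcal H^1_H$ with the form norm, and composing with the compact inclusion makes $(A+1)^{-1/2}$, hence $(A+1)^{-1}$, compact. Since the weight $e^{c}$ is bounded above and below, the weighted and unweighted $L^2$ norms are equivalent, so the lemma applies.

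\emph{Step (iii).} Since $A$ has compact resolvent, there is an orthonormal eigenbasis $\{\phi_j\}$ with eigenvalues $\mu_j\to\infty$. On the form domain we have $I_H(W,W)=\sum_j(\mu_j-C_1)|\langle\!\langle W,\phi_j\rangle\!\rangle|^2$. Let $E_-$ be the span of the $\phi_j$ with $\mu_j<C_1$; it is finite-dimensional, and $I_H$ is negative definite on it. Conversely, let $V$ be a subspace on which $I_H$ is negative definite. If $\dim V>\dim E_-$, some nonzero $W\in V$ is $L^2_c$-orthogonal to $E_-$, and then $I_H(W,W)\ge0$, a contradiction. This proves $\ind(\mathcal H^1_H)=\#\{j:\mu_j<C_1\}$.

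The subtle point is that $\ind(f)$ is defined using smooth sections rather than $\mathcal H^1_H$. The eigenfunctions need not be smooth, since we have no hypoelliptic regularity for the coupled operator: the term $\mathcal F^2$ alters the principal part. We therefore argue by density instead. Smooth sections are dense in $\mathcal H^1_H$ and $I_H$ is continuous there. Approximating a basis of $E_-$ by smooth sections in $\mathcal H^1_H$, the Gram matrix of $I_H$ stays negative definite for close enough approximants. So the smooth index is at least $\dim E_-$, and it is trivially at most the $\mathcal H^1_H$ index.

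For the nullity, $I_H(W,\cdot)=0$ on $\mathcal H^1_H$ means $a(W,W')=C_1\langle\!\langle W,W'\rangle\!\rangle$ for all $W'$ in the form domain. By the form representation theorem this is equivalent to $W\in D(A)$ with $AW=C_1W$. Compactness of the resolvent makes this eigenspace finite-dimensional. I expect the main obstacle to be the passage between smooth sections and the completion, namely the density and approximation step, together with checking that the closed form has exactly $\mathcal H^1_H$ as its domain.
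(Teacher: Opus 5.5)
Your proposal is correct and follows the paper's proof essentially step for step. It uses the same pointwise curvature bound and Cauchy--Schwarz estimates for (i), the form-norm equivalence, the representation theorem and the compact embedding of Lemma \ref{lem:compact-embed} for (ii), and the min--max count followed by the same smooth-approximation argument for (iii); you only spell out details the paper leaves implicit, namely the $(A+1)^{-1/2}$ argument for compact resolvent and the representation-theorem identification of the radical with $\ker(A-C_1)$.
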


\begin{proof}
(i) Since $M$ is closed, $f(M)$ is compact, so $\kappa_f<\infty$ and $\sup_M|df_H|^2<\infty$. For each $i$,
\[
\tilde R_N\bigl(df(e_i),W,W,df(e_i)\bigr)\le\kappa_f\bigl(|df(e_i)|^2|W|^2-\langle df(e_i),W\rangle^2\bigr)\le\kappa_f|df(e_i)|^2|W|^2 ,
\]
whence $\mathcal R_T(W,W)\le C_1|W|^2$; when $\dim N=1$ the curvature term vanishes. Dropping the nonnegative term $\mathcal F(W)^2$ in \eqref{eq:index-polarized} gives the lower bound. For boundedness, the Cauchy--Schwarz inequality gives $\mathcal F(W)^2\le|df_H|^2|\nabla_HW|^2$, and $|\mathcal R_T(W,W')|\le C|W||W'|$ with $C$ depending on $\sup_{f(M)}|\tilde R^N|$ and $\sup_M|df_H|^2$. Hence $|I_H(W,W')|\le C\|W\|_{\mathcal H^1_H}\|W'\|_{\mathcal H^1_H}$.

(ii) By (i), $a(W,W)\ge\int_Me^{c}|\nabla_HW|^2dv_g\ge0$ and
\[
\|W\|^2_{\mathcal H^1_H}\ \le\ a(W,W)+\|W\|^2_{L^2_c}\ \le\ C\|W\|^2_{\mathcal H^1_H},
\]
so the form norm of $a$ is equivalent to $\|\cdot\|_{\mathcal H^1_H}$. As $\mathcal H^1_H$ is complete and embedded in $L^2_c$, $a$ is a closed nonnegative form. By the representation theorem for closed forms there is a unique nonnegative self-adjoint operator $A$ on $L^2_c$ with form domain $\mathcal H^1_H$ and $a(W,W')=\langle\!\langle AW,W'\rangle\!\rangle$ for $W$ in its domain. Since the embedding $\mathcal H^1_H\hookrightarrow L^2_c$ is compact by Lemma \ref{lem:compact-embed}, $A$ has compact resolvent, hence discrete spectrum with finite multiplicities.

(iii) On $\mathcal H^1_H$ we have $I_H(W,W)=a(W,W)-C_1\|W\|^2_{L^2_c}$. By the min--max principle, the maximal dimension $k$ of a subspace of $\mathcal H^1_H$ on which $I_H$ is negative definite equals $\#\{j:\mu_j<C_1\}$, a finite number, and the radical of $I_H$ on $\mathcal H^1_H$ is $\ker(A-C_1)$, of finite dimension.

It remains to see that restricting to smooth sections does not change the index. Since $\Gamma(f^{-1}TN)\subset\mathcal H^1_H$, we have $\ind(f)\le k$. If $k=0$ there is nothing more to prove, so assume $k\ge1$. Let $\mathcal W\subset\mathcal H^1_H$ be a $k$-dimensional subspace on which $I_H$ is negative definite, with basis $W_1,\dots,W_k$ normalized in $\mathcal H^1_H$. The function
\[
\lambda\longmapsto I_H\Bigl(\sum_l\lambda_lW_l,\sum_l\lambda_lW_l\Bigr)
\]
is continuous and strictly negative on the compact unit sphere of $\mathbb R^k$, hence bounded above there by some $-\delta<0$. By definition of $\mathcal H^1_H$ there are smooth sections $W_l^\varepsilon$ with $\|W_l^\varepsilon-W_l\|_{\mathcal H^1_H}<\varepsilon$. By the boundedness in (i), the same expression formed with the $W_l^\varepsilon$ is bounded above by $-\delta/2$ on the unit sphere once $\varepsilon$ is small. In particular the $W_l^\varepsilon$ are linearly independent and span a $k$-dimensional subspace of $\Gamma(f^{-1}TN)$ on which $I_H$ is negative definite. Hence $\ind(f)\ge k$.
\end{proof}

Throughout the paper, the nullity is understood in the quadratic-form sense of Proposition \ref{prop:finiteness}(iii). The relation of $A$ with the Jacobi operator is recorded in Remark \ref{rem:jacobi-link}.

\subsection{Test Spaces and Killing Fields}

For a real vector space $V$ and a symmetric bilinear form $b$ on $V$, let $\ind(b)$ denote the supremum of the dimensions of the subspaces of $V$ on which $b$ is negative definite.

\begin{lemma}[Test spaces]\label{lem:test-space}
Let $V$ be a real vector space and $\Phi:V\to\Gamma(f^{-1}TN)$ an injective linear map. Then
\[
\ind(f)\ \ge\ \ind\bigl(I_H\circ(\Phi\times\Phi)\bigr),
\]
with equality if $\Phi$ is onto.
\end{lemma}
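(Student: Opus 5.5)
The plan is to push negative-definite subspaces forward along $\Phi$, using injectivity to preserve dimension, and then to read off equality when $\Phi$ is onto.

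\emph{Inequality.} Write $b:=I_H\circ(\Phi\times\Phi)$, a symmetric bilinear form on $V$. Take any finite-dimensional subspace $U\subset V$ on which $b$ is negative definite. Its image $\Phi(U)$ is a linear subspace of $\Gamma(f^{-1}TN)$. Because $\Phi$ is injective, $\dim\Phi(U)=\dim U$.

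For $W=\Phi(u)\in\Phi(U)$ with $W\neq0$, injectivity gives $u\neq0$. Hence $I_H(W,W)=b(u,u)<0$, so $I_H$ is negative definite on $\Phi(U)$. By Definition \ref{def:index-form}, $\ind(f)\ge\dim U$. Taking the supremum over all such $U$ gives $\ind(f)\ge\ind(b)$. This also covers the case $\ind(b)=+\infty$, although by Proposition \ref{prop:finiteness}(iii) that case cannot actually occur.

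\emph{Equality.} Now assume $\Phi$ is onto, so that it is a linear isomorphism $V\to\Gamma(f^{-1}TN)$. Run the same argument with $\Phi^{-1}$. Any subspace $\mathcal W\subset\Gamma(f^{-1}TN)$ on which $I_H$ is negative definite pulls back to $\Phi^{-1}(\mathcal W)$. This subspace has the same dimension, and $b$ is negative definite on it. Therefore $\ind(b)\ge\ind(f)$, and the two indices are equal.

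There is no real obstacle here. The only point to watch is that the index of $f$ is defined in Definition \ref{def:index-form} on smooth sections, not on $\mathcal H^1_H$. No completion argument is therefore needed at this stage. The identification with the $\mathcal H^1_H$ index was already handled at the end of the proof of Proposition \ref{prop:finiteness}.
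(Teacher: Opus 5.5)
Your proof is correct and is the same argument as the paper's. In both, negative-definite subspaces are pushed forward along the injective $\Phi$ with dimension preserved, and they are pulled back along $\Phi^{-1}$ when $\Phi$ is bijective. (Your aside that $\ind(b)=+\infty$ cannot occur relies on $H$ being bracket-generating, which this lemma does not assume, but your argument does not depend on that aside.)
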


\begin{proof}
If $I_H\circ(\Phi\times\Phi)$ is negative definite on a subspace $V_0\subset V$, then $\Phi$ maps $V_0$ isomorphically onto $\Phi(V_0)$, on which $I_H$ is negative definite. If $\Phi$ is bijective, $\Phi^{-1}$ provides the converse.
\end{proof}

\begin{proposition}[Killing null directions]\label{prop:killing}
Let $f$ be exponentially subelliptic harmonic and let $K$ be a Killing field defined on a neighbourhood of $f(M)$. Then $K\circ f$ lies in the radical of $I_H$ on $\mathcal H^1_H$. Consequently $\nul(f)\ge\dim\{K\circ f:K\in\mathfrak k\}$ for every linear space $\mathfrak k$ of such Killing fields.
\end{proposition}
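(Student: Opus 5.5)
Let $\phi_t$ be the local flow of $K$ and put $f_t:=\phi_t\circ f$, defined for $|t|$ small because $f(M)$ is compact and $K$ is defined on a neighbourhood of it. Since $\phi_t$ is an isometry, $|d(f_t)_H|^2=|df_H|^2$ pointwise, so $E_H(f_t)=E_H(f)$ for all small $t$. More generally, for any smooth section $W'$ I would consider the two-parameter family
\[
h(x,t,s):=\phi_t\bigl(\exp_{f(x)}(sW'(x))\bigr).
\]
It satisfies $\partial_th|_{(0,0)}=K\circ f$ and $\partial_sh|_{(0,0)}=W'$. Because each $\phi_t$ is an isometry, $E_H(h(\cdot,t,s))=E_H(h(\cdot,0,s))$ does not depend on $t$. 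Differentiating in $t$ and then in $s$ at $(0,0)$, Theorem \ref{th:2.4} gives
\[
I_H(K\circ f,W')=\frac{\partial^2}{\partial t\,\partial s}E_H(h)\Bigr|_{(0,0)}=0
\]
for every $W'\in\Gamma(f^{-1}TN)$. Theorem \ref{th:2.4} needs $f$ to be exponentially subelliptic harmonic, which holds by hypothesis, and $h$ must be smooth on $M\times(-\epsilon,\epsilon)^2$, which follows from the compactness of $M$ as in the construction preceding Theorem \ref{th:2.3}.

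Next I would pass from smooth $W'$ to $\mathcal H^1_H$. $K\circ f$ is a smooth section, hence lies in $\mathcal H^1_H$. By Proposition \ref{prop:finiteness}(i), $I_H$ is bounded on $\mathcal H^1_H$, so $W'\mapsto I_H(K\circ f,W')$ is continuous. It vanishes on the dense subspace $\Gamma(f^{-1}TN)$, so it vanishes on all of $\mathcal H^1_H$. Hence $K\circ f$ lies in the radical.

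For the consequence, the map $K\mapsto K\circ f$ sends $\mathfrak k$ linearly into the radical. Its image is therefore a subspace of the radical of dimension $\dim\{K\circ f:K\in\mathfrak k\}$, and the bound on $\nul(f)$ follows from Proposition \ref{prop:finiteness}(iii).

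The only delicate point is justifying that $t$-independence of $E_H(h)$ yields the mixed second derivative, i.e. using Theorem \ref{th:2.4} for a genuinely two-parameter family rather than its polarization. Since Theorem \ref{th:2.4} is stated for arbitrary smooth $h$, this is direct. If needed, I would check it instead by a direct computation: $\tilde\nabla K$ is skew, which makes $\mathcal F(K\circ f)=0$, and the Jacobi-field identity $\tilde\nabla^2K=-\tilde R(K,\cdot)\cdot$ handles the remaining terms.
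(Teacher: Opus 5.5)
Your proposal is correct and follows the paper's proof essentially step by step. Both use the two-parameter family $\phi_t\circ\exp_f(sW')$, the $t$-independence of $E_H$ coming from the isometry property, Theorem \ref{th:2.4} to get $I_H(K\circ f,W')=0$ for smooth $W'$, and then density together with the boundedness in Proposition \ref{prop:finiteness}(i) to pass to $\mathcal H^1_H$.
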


\begin{proof}
Let $\phi_t$ be the local flow of $K$. Since $f(M)$ is compact, there are a neighbourhood $U$ of $f(M)$ and $\epsilon>0$ such that $\phi_t$ is defined on $U$ for $|t|<\epsilon$. Each $\phi_t$ is an isometry onto its image. Hence, for every smooth $g$ with $g(M)\subset U$, one has $|d(\phi_t\circ g)(X)|=|dg(X)|$ for all $X$, so $|d(\phi_t\circ g)_H|=|dg_H|$ and $E_H(\phi_t\circ g)=E_H(g)$.

Given $W'\in\Gamma(f^{-1}TN)$, put $f_s:=\exp_f(sW')$, which takes values in $U$ for small $|s|$, and $h(x,t,s):=\phi_t(f_s(x))$. Then $E_H(h(\cdot,t,s))=E_H(f_s)$ does not depend on $t$, so its mixed derivative vanishes. Since $\partial_th|_{(0,0)}=K\circ f$ and $\partial_sh|_{(0,0)}=W'$, Theorem \ref{th:2.4} gives $I_H(K\circ f,W')=0$ for all smooth $W'$. By density and Proposition \ref{prop:finiteness}(i), the same holds for all $W'\in\mathcal H^1_H$.
\end{proof}

\section{The Extrinsic Test Space}\label{sec:ext}

This section studies the first test space. It exists for every map into an isometrically immersed target and has at most $n+p$ dimensions. We parametrize it by the coefficient space $\mathbb R^{n+p}$, compute the trace of the resulting coefficient matrix, and bound its eigenvalues. The trace is taken on $\mathbb R^{n+p}$. The projected fields need not form an $L^2_c$-orthonormal basis of their span.

\subsection{The Extrinsic Quantity}

Let $\iota:N^n\to\mathbb R^{n+p}$ be an isometric immersion with second fundamental form
\[
B(X,Y)=\bigl({}^{R}\!\tilde\nabla_XY\bigr)^{\perp},\qquad X,Y\in\Gamma(TN),
\]
where ${}^{R}\!\tilde\nabla$ is the flat connection of $\mathbb R^{n+p}$ and $(\,\cdot\,)^\perp$ the projection onto the normal bundle. The shape operator satisfies $\langle\mathcal A^\xi X,Y\rangle=\langle B(X,Y),\xi\rangle$ for normal $\xi$, and the Gauss equation reads
\begin{equation}\label{eq:gauss}
\tilde R_N(X,Y,Y,X)=\bigl\langle B(X,X),B(Y,Y)\bigr\rangle-\bigl|B(X,Y)\bigr|^2,\qquad X,Y\in T_qN .
\end{equation}
Choose orthonormal bases $\{\varepsilon_A\}_{A=1}^n$ of $T_qN$ and $\{\nu_\gamma\}_{\gamma=1}^p$ of the normal space at $q$, and put
\[
\eta:=\sum_{A=1}^nB(\varepsilon_A,\varepsilon_A),
\qquad
\sigma(X):=\sum_{A=1}^n\bigl|B(X,\varepsilon_A)\bigr|^2 .
\]
We identify $T_qN$ with $d\iota_q(T_qN)\subset\mathbb R^{n+p}$ by means of $d\iota$. For a parallel orthonormal frame $\{E_a\}_{a=1}^{n+p}$ of $\mathbb R^{n+p}$, the tangential component is
\begin{equation}\label{eq:Etop}
E_a^\top(q):=\bigl(d\iota_q\bigr)^{-1}\Bigl[\proj_{d\iota_q(T_qN)}E_a\Bigr]\in T_qN ,
\end{equation}
which is well defined since $d\iota_q$ is injective. Along $f$ we write $E_a^\top$ for the section $E_a^\top\circ f\in\Gamma(f^{-1}TN)$.

\begin{definition}\label{def:howard}
For $X_1,\dots,X_m\in T_qN$ put
\begin{equation}\label{eq:Qdef}
\widetilde{\mathscr Q}_N(X_1,\dots,X_m):=\Bigl|\sum_{i=1}^mB(X_i,X_i)\Bigr|^2-\Bigl\langle\sum_{i=1}^mB(X_i,X_i),\eta\Bigr\rangle+2\sum_{i=1}^m\sigma(X_i),
\end{equation}
and for $f:M\to N$ write $\widetilde{\mathscr Q}_N(f):=\widetilde{\mathscr Q}_N(df(e_1),\dots,df(e_m))$. If $\iota$ is minimal, so that $\eta\equiv0$, then $\widetilde{\mathscr Q}_N\ge0$ and the averaged trace criterion gives no instability conclusion.
\end{definition}

For the Dirichlet energy, the averaged quantity is the Howard--Wei expression $2\sum_i\sigma(df(e_i))-\langle S_f,\eta\rangle$ \cite{How86}, with $S_f$ as in Lemma \ref{lem:frame}. The single-direction specialization of \eqref{eq:Qdef},
\[
q(X):=\widetilde{\mathscr Q}_N(X)=\bigl|B(X,X)\bigr|^2-\bigl\langle B(X,X),\eta\bigr\rangle+2\sigma(X),
\]
contains in addition the quartic term $|B(X,X)|^2$ produced by the exponential factor. When $N=S^n(r)$, the field $E_a^\top$ is the gradient of the restriction of the linear function $\langle x,E_a\rangle$, a conformal field of $S^n(r)$. These are the test fields used by Leung \cite{Leu82} and by Chiang \cite{Ch20}. The conditions $q(X_i)<0$ for every $i$ do not imply $\widetilde{\mathscr Q}_N(X_1,\dots,X_m)<0$, so the quantity must be evaluated on the whole horizontal differential.

\begin{example}\label{ex:counterexample}
Let $N=S^3(1)\subset\mathbb R^4$ with inward unit normal $\nu=-x$, so that $B(X,Y)=\langle X,Y\rangle\nu$, $\eta=3\nu$ and $\sigma(X)=|X|^2$. Then, for $X_1,X_2\in T_qS^3(1)$ and $s:=|X_1|^2+|X_2|^2$,
\[
q(X)=|X|^4-3|X|^2+2|X|^2=|X|^2\bigl(|X|^2-1\bigr),
\qquad
\widetilde{\mathscr Q}_N(X_1,X_2)=s^2-3s+2s=s(s-1).
\]
Choosing $|X_1|^2=|X_2|^2=\frac34$ gives
\[
q(X_1)=q(X_2)=\tfrac34\bigl(\tfrac34-1\bigr)=-\tfrac3{16}<0,
\qquad
\widetilde{\mathscr Q}_N(X_1,X_2)=\tfrac32\bigl(\tfrac32-1\bigr)=\tfrac34>0 .
\]
\end{example}

\begin{lemma}\label{lem:frame-indep}
$\widetilde{\mathscr Q}_N(f)$ is independent of the choice of horizontal frame, and hence is a globally defined function on $M$.
\end{lemma}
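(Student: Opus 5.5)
The plan is to show that each of the three terms in \eqref{eq:Qdef} is a function of the frame-independent tensor $T_x$ of \eqref{eq:T-tensor}, or is invariant under orthogonal changes of horizontal frame. Let $e_i'=\sum_jO_{ij}e_j$ with $O\in O(m)$ be another local orthonormal frame of $H$ on an overlapping domain. By linearity, $df(e_i')=\sum_jO_{ij}df(e_j)$. The quantities $\eta$ and $\sigma$ are defined at $q=f(x)$ independently of the frame on $M$: $\eta$ is the mean-curvature-type trace, which is independent of the choice of $\{\varepsilon_A\}$ since it is a trace of the bilinear form $B$, and $\sigma(X)$ is a quadratic form in $X$.

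First, consider $\sum_iB(df(e_i'),df(e_i'))$. Bilinearity and symmetry of $B$ give
\[
\sum_iB\bigl(df(e_i'),df(e_i')\bigr)=\sum_{j,k}\Bigl(\sum_iO_{ij}O_{ik}\Bigr)B\bigl(df(e_j),df(e_k)\bigr)=\sum_jB\bigl(df(e_j),df(e_j)\bigr),
\]
using $\sum_iO_{ij}O_{ik}=\delta_{jk}$. Equivalently, this sum is the contraction of $B$ with $T_x$. Hence the first two terms of \eqref{eq:Qdef} are frame independent.

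Second, $\sigma$ is the quadratic form associated with the symmetric bilinear form $\tilde\sigma(X,Y):=\sum_A\langle B(X,\varepsilon_A),B(Y,\varepsilon_A)\rangle$. The same computation therefore gives $\sum_i\sigma(df(e_i'))=\sum_j\sigma(df(e_j))$, which is $\tr(\tilde\sigma\circ T_x)$ in suitable notation. Independence of the basis $\{\varepsilon_A\}$ follows because $\sum_A|B(X,\varepsilon_A)|^2=\sum_\gamma|\mathcal A^{\nu_\gamma}X|^2$, which is expressed through shape operators; alternatively, one can repeat the orthogonal-change argument in the $\varepsilon$ index.

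Since local orthonormal frames of $H$ exist around every point and any two of them differ pointwise by an element of $O(m)$, the locally defined functions agree on overlaps. They therefore patch together to a globally defined smooth function on $M$. There is no real obstacle here; the only care needed is to note that the identification of $T_qN$ with $d\iota_q(T_qN)$ and the choice of normal basis do not affect $\eta$ or the inner products. This holds because all the expressions are traces of tensors.
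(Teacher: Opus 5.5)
Your proposal is correct and follows the paper's argument. The relation $\sum_iO_{ij}O_{ik}=\delta_{jk}$ makes $S_f$ frame independent, and the $\sigma$-term is a trace against $T$: the paper writes it as $\sum_\gamma\tr\bigl((\mathcal A^{\nu_\gamma})^2T\bigr)$, which is your $\tr(\tilde\sigma\circ T_x)$. Your remarks on independence of $\{\varepsilon_A\}$ and on patching over overlaps are also correct, and make explicit what the paper leaves implicit.
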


\begin{proof}
If $e_i'=\sum_{j=1}^mO_{ij}e_j$ with $O\in O(m)$, then $\sum_{i=1}^mO_{ij}O_{ik}=\delta_{jk}$, so
\[
\sum_{i=1}^mB\bigl(df(e_i'),df(e_i')\bigr)
=\sum_{j,k=1}^m\Bigl(\sum_{i=1}^mO_{ij}O_{ik}\Bigr)B\bigl(df(e_j),df(e_k)\bigr)
=\sum_{j=1}^mB\bigl(df(e_j),df(e_j)\bigr).
\]
Moreover $\sum_{i=1}^m\sigma(df(e_i))=\sum_{\gamma=1}^p\tr\bigl((\mathcal A^{\nu_\gamma})^2T\bigr)$ depends only on the frame-independent tensor $T$ of \eqref{eq:T-tensor}.
\end{proof}

\subsection{The Averaging Identity}

\begin{lemma}\label{lem:frame}
Let $\{E_a\}_{a=1}^{n+p}$ be a parallel orthonormal frame of $\mathbb R^{n+p}$ and write $E_a=E_a^\top+E_a^\perp$ along $N$. Then for all $X\in T_qN$
\begin{equation}\label{eq:weingarten}
\tilde\nabla_XE_a^\top=\mathcal A^{E_a^\perp}(X),
\end{equation}
and, with $S_f:=\sum_{i=1}^mB\bigl(df(e_i),df(e_i)\bigr)$,
\begin{align}
\sum_{a=1}^{n+p}\mathcal F\bigl(E_a^\top\bigr)^2&=\bigl|S_f\bigr|^2,\label{eq:l1}\\
\sum_{a=1}^{n+p}\bigl|\nabla_HE_a^\top\bigr|^2&=\sum_{i=1}^m\sigma\bigl(df(e_i)\bigr),\label{eq:l2}\\
\sum_{a=1}^{n+p}\mathcal R_T\bigl(E_a^\top,E_a^\top\bigr)&=\bigl\langle S_f,\eta\bigr\rangle-\sum_{i=1}^m\sigma\bigl(df(e_i)\bigr).\label{eq:l3}
\end{align}
\end{lemma}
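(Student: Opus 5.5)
The plan is to first prove the Weingarten-type identity \eqref{eq:weingarten}, and then to derive each averaged identity by summing over $a$. Summing over $a$ amounts to a trace over the ambient space $\mathbb R^{n+p}$, and the completeness relation $\sum_aE_a\otimes E_a=\mathrm{id}$ does the work.

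For \eqref{eq:weingarten}, differentiate the decomposition $E_a=E_a^\top+E_a^\perp$ along $X\in T_qN$ using the flat connection ${}^{R}\!\tilde\nabla$, for which $E_a$ is parallel. This gives $0={}^{R}\!\tilde\nabla_XE_a^\top+{}^{R}\!\tilde\nabla_XE_a^\perp$. Now take tangential parts. By the Gauss formula the tangential part of ${}^{R}\!\tilde\nabla_XE_a^\top$ is $\tilde\nabla_XE_a^\top$. By the Weingarten formula the tangential part of ${}^{R}\!\tilde\nabla_XE_a^\perp$ is $-\mathcal A^{E_a^\perp}X$. Together these give \eqref{eq:weingarten}. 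Along $f$ this yields $\tilde\nabla_{e_i}E_a^\top=\mathcal A^{E_a^\perp}(df(e_i))$, by the chain rule for the pullback connection.

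For the three sums, write $E_a^\perp=\sum_\gamma\langle E_a,\nu_\gamma\rangle\nu_\gamma$ and $E_a^\top=\sum_A\langle E_a,\varepsilon_A\rangle\varepsilon_A$.

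For \eqref{eq:l1}, compute $\mathcal F(E_a^\top)=\sum_i\langle\mathcal A^{E_a^\perp}df(e_i),df(e_i)\rangle=\langle S_f,E_a^\perp\rangle=\langle S_f,E_a\rangle$, where the last step uses that $S_f$ is normal. Then $\sum_a\langle S_f,E_a\rangle^2=|S_f|^2$ by Parseval in $\mathbb R^{n+p}$.

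For \eqref{eq:l2}, observe that $|\mathcal A^{E_a^\perp}X|^2=\sum_A\langle B(X,\varepsilon_A),E_a\rangle^2$. Summing over $a$ gives $\sum_A|B(X,\varepsilon_A)|^2=\sigma(X)$, and then we sum over $i$.

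For \eqref{eq:l3}, substitute $X=df(e_i)$ into the Gauss equation written in polarized form $\tilde R_N(X,Y,Y,X)=\langle B(X,X),B(Y,Y)\rangle-|B(X,Y)|^2$, and expand the expression bilinearly in $Y=E_a^\top$. The sum $\sum_a\langle B(X,X),B(E_a^\top,E_a^\top)\rangle$ becomes $\sum_{A,B}\bigl(\sum_a\langle E_a,\varepsilon_A\rangle\langle E_a,\varepsilon_B\rangle\bigr)\langle B(X,X),B(\varepsilon_A,\varepsilon_B)\rangle$. By orthonormality the inner sum over $a$ equals $\delta_{AB}$, so the whole expression is $\langle B(X,X),\eta\rangle$. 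The same argument gives $\sum_a|B(X,E_a^\top)|^2=\sigma(X)$. Summing over $i$ then gives \eqref{eq:l3}, because $\mathcal R_T(W,W)=\sum_i\tilde R_N(df(e_i),W,W,df(e_i))$.

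Two points need care. First, the identity $\sum_a\langle E_a,u\rangle\langle E_a,v\rangle=\langle u,v\rangle$ must be applied in the ambient space, to tangent vectors identified through $d\iota$. Second, the signs in the Weingarten formula must be checked against the convention $\langle\mathcal A^\xi X,Y\rangle=\langle B(X,Y),\xi\rangle$. Neither is a real obstacle; the only delicate matter is sign bookkeeping in \eqref{eq:weingarten}.
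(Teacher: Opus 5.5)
Your proposal is correct and follows the paper's proof essentially step by step. You take the tangential part of the Gauss--Weingarten decomposition of ${}^{R}\!\tilde\nabla_XE_a=0$, derive $\mathcal F(E_a^\top)=\langle S_f,E_a\rangle$ and apply Parseval, and use the completeness relation $\sum_a\langle E_a,\varepsilon_A\rangle\langle E_a,\varepsilon_B\rangle=\delta_{AB}$ in the Gauss equation. The only difference is cosmetic: in \eqref{eq:l2} you apply Parseval directly to $\langle B(X,\varepsilon_A),E_a\rangle$, whereas the paper first expands $E_a^\perp$ in the normal frame $\{\nu_\gamma\}$; the two are equivalent.
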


\begin{proof}
Since $E_a$ is constant, ${}^{R}\!\tilde\nabla_XE_a=0$. The Gauss--Weingarten formulas
\[
{}^{R}\!\tilde\nabla_XE_a^\top=\tilde\nabla_XE_a^\top+B(X,E_a^\top),
\qquad
{}^{R}\!\tilde\nabla_XE_a^\perp=-\mathcal A^{E_a^\perp}(X)+\nabla^\perp_XE_a^\perp
\]
give \eqref{eq:weingarten} upon taking tangential parts.

For \eqref{eq:l1}, formula \eqref{eq:weingarten} gives $\langle\tilde\nabla_{e_i}E_a^\top,df(e_i)\rangle=\langle B(df(e_i),df(e_i)),E_a\rangle$ for each $i$. Summing over $i$ gives $\mathcal F(E_a^\top)=\langle S_f,E_a\rangle$, and Parseval's identity for the orthonormal basis $\{E_a\}$ of $\mathbb R^{n+p}$ yields $\sum_a\langle S_f,E_a\rangle^2=|S_f|^2$.

For \eqref{eq:l2}, write $E_a^\perp=\sum_{\gamma=1}^p\langle E_a,\nu_\gamma\rangle\nu_\gamma$, so that by \eqref{eq:weingarten}
\[
\tilde\nabla_{e_i}E_a^\top=\sum_{\gamma=1}^p\langle E_a,\nu_\gamma\rangle\,\mathcal A^{\nu_\gamma}\bigl(df(e_i)\bigr).
\]
Using $\sum_{a=1}^{n+p}\langle E_a,\nu_\gamma\rangle\langle E_a,\nu_\delta\rangle=\delta_{\gamma\delta}$ and expanding in the basis $\{\varepsilon_A\}$,
\begin{align}
\sum_{a=1}^{n+p}\bigl|\tilde\nabla_{e_i}E_a^\top\bigr|^2
&=\sum_{\gamma,\delta=1}^p\delta_{\gamma\delta}\bigl\langle\mathcal A^{\nu_\gamma}(df(e_i)),\mathcal A^{\nu_\delta}(df(e_i))\bigr\rangle
=\sum_{\gamma=1}^p\bigl|\mathcal A^{\nu_\gamma}(df(e_i))\bigr|^2\notag\\
&=\sum_{A=1}^n\sum_{\gamma=1}^p\bigl\langle B(df(e_i),\varepsilon_A),\nu_\gamma\bigr\rangle^2
=\sum_{A=1}^n\bigl|B(df(e_i),\varepsilon_A)\bigr|^2=\sigma\bigl(df(e_i)\bigr).
\end{align}
Summation over $i$ gives \eqref{eq:l2}.

For \eqref{eq:l3}, the Gauss equation \eqref{eq:gauss} with $X=df(e_i)$ and $Y=E_a^\top$ gives
\[
\tilde R_N\bigl(df(e_i),E_a^\top,E_a^\top,df(e_i)\bigr)
=\bigl\langle B(df(e_i),df(e_i)),B(E_a^\top,E_a^\top)\bigr\rangle-\bigl|B(df(e_i),E_a^\top)\bigr|^2 .
\]
Write $E_a^\top=\sum_{A=1}^n\langle E_a,\varepsilon_A\rangle\varepsilon_A$ and use $\sum_{a=1}^{n+p}\langle E_a,\varepsilon_A\rangle\langle E_a,\varepsilon_B\rangle=\delta_{AB}$. Then
\begin{align}
\sum_{a=1}^{n+p}B\bigl(E_a^\top,E_a^\top\bigr)&=\sum_{A,B=1}^n\delta_{AB}B(\varepsilon_A,\varepsilon_B)=\eta,\\
\sum_{a=1}^{n+p}\bigl|B(df(e_i),E_a^\top)\bigr|^2&=\sum_{A=1}^n\bigl|B(df(e_i),\varepsilon_A)\bigr|^2=\sigma\bigl(df(e_i)\bigr).
\end{align}
Summation over $a$ and then over $i$ gives \eqref{eq:l3}.
\end{proof}

\begin{theorem}\label{thm:master}
Let $\iota:N^n\to\mathbb R^{n+p}$ be an isometric immersion and let $f:M\to N$ be exponentially subelliptic harmonic, $M$ closed. Then
\begin{equation}\label{eq:master}
\sum_{a=1}^{n+p}I_H\bigl(E_a^\top,E_a^\top\bigr)=\int_Me^{c}\,\widetilde{\mathscr Q}_N(f)\,dv_g=:\widetilde{\mathcal T}(f).
\end{equation}
If $\widetilde{\mathcal T}(f)<0$, then $I_H(E_{a_0}^\top,E_{a_0}^\top)\le\widetilde{\mathcal T}(f)/(n+p)<0$ for some $a_0$, and $f$ is unstable.
\end{theorem}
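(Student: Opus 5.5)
The plan is to sum the index form \eqref{eq:index-polarized} over the extrinsic test fields term by term and use the pointwise identities of Lemma \ref{lem:frame}. With $W=W'=E_a^\top$, the integrand of $I_H(E_a^\top,E_a^\top)$ is
\[
e^{c}\bigl[\mathcal F(E_a^\top)^2+|\nabla_HE_a^\top|^2-\mathcal R_T(E_a^\top,E_a^\top)\bigr].
\]
The sum over $a$ is finite, so it can be moved inside the integral. Each of the three summed quantities is then given pointwise by \eqref{eq:l1}, \eqref{eq:l2}, \eqref{eq:l3}.

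Substituting, the summed integrand becomes
\[
e^{c}\Bigl[|S_f|^2+\sum_i\sigma(df(e_i))-\langle S_f,\eta\rangle+\sum_i\sigma(df(e_i))\Bigr]
=e^{c}\Bigl[|S_f|^2-\langle S_f,\eta\rangle+2\sum_i\sigma(df(e_i))\Bigr].
\]
Since $S_f=\sum_iB(df(e_i),df(e_i))$, this is exactly $e^{c}\widetilde{\mathscr Q}_N(f)$ by \eqref{eq:Qdef}. Integrating gives \eqref{eq:master}.

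The identities are pointwise, with a local horizontal frame and local bases $\{\varepsilon_A\},\{\nu_\gamma\}$ at $f(x)$. Lemma \ref{lem:frame-indep} shows that $\widetilde{\mathscr Q}_N(f)$ is globally defined. The left-hand integrands are already frame-independent, being parts of the index form. Hence patching over a cover of $M$ causes no difficulty.

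One remaining check is that $\tilde\nabla_{e_i}(E_a^\top\circ f)=\tilde\nabla_{df(e_i)}E_a^\top$. This is the chain rule for the pullback connection, and it is what lets \eqref{eq:weingarten} be applied along $f$. The paper already uses it implicitly in Lemma \ref{lem:frame}.

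For the instability assertion, the $n+p$ numbers $I_H(E_a^\top,E_a^\top)$ have sum $\widetilde{\mathcal T}(f)<0$. Their minimum is therefore at most the average $\widetilde{\mathcal T}(f)/(n+p)$, so some $a_0$ gives a strictly negative value. In particular $E_{a_0}^\top\ne0$, and by Definition \ref{def:index-form} $f$ is not stable.

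There is no substantial obstacle beyond Lemma \ref{lem:frame}. The only care needed is bookkeeping the signs: the $-\mathcal R_T$ term contributes $+\sum_i\sigma$, doubling the $\sigma$ term, and $-\langle S_f,\eta\rangle$.
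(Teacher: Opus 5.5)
Your proposal is correct and follows essentially the same route as the paper. You set $W=W'=E_a^\top$ in \eqref{eq:index-polarized}, sum over $a$, substitute \eqref{eq:l1}--\eqref{eq:l3} of Lemma \ref{lem:frame} to recognize $e^{c}\widetilde{\mathscr Q}_N(f)$, and obtain instability by comparing the smallest of the $n+p$ values with their average. Your additional remarks on frame independence and on the chain rule for the pullback connection are accurate, and are left implicit in the paper's argument.
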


\begin{proof}
Take $W=W'=E_a^\top$ in \eqref{eq:index-polarized}, sum over $a$, and substitute \eqref{eq:l1}--\eqref{eq:l3}:
\begin{align}
\sum_{a=1}^{n+p}I_H\bigl(E_a^\top,E_a^\top\bigr)
&=\int_Me^{c}\Bigl[\bigl|S_f\bigr|^2+\sum_{i}\sigma\bigl(df(e_i)\bigr)-\bigl\langle S_f,\eta\bigr\rangle+\sum_{i}\sigma\bigl(df(e_i)\bigr)\Bigr]dv_g\notag\\
&=\int_Me^{c}\widetilde{\mathscr Q}_N(f)\,dv_g ,
\end{align}
by Definition \ref{def:howard}. The left-hand side is a sum of $n+p$ real numbers, so if it is negative, at least one of them does not exceed its average.
\end{proof}

This proves Theorem \ref{thm:A}.

\subsection{The Index Bound and Rigidity}

\begin{lemma}\label{lem:lower-bound}
Suppose $K^N\le\kappa$ along $f(M)$ for some $\kappa>0$. Then for every $W\in\Gamma(f^{-1}TN)$ with $|W|\le1$ pointwise,
\[
I_H(W,W)\ \ge\ -C_0,\qquad C_0:=\kappa\int_Me^{c}\,|df_H|^2\,dv_g .
\]
\end{lemma}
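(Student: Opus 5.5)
The plan is to bound the three terms of \eqref{eq:index-polarized} separately, taking $W'=W$. The first two terms, $\mathcal F(W)^2$ and $\sum_i|\tilde\nabla_{e_i}W|^2$, are pointwise nonnegative, so we simply drop them:
\[
I_H(W,W)\ \ge\ -\int_Me^{c}\,\mathcal R_T(W,W)\,dv_g .
\]
It therefore suffices to prove the pointwise estimate $\mathcal R_T(W,W)\le\kappa|df_H|^2$ whenever $|W|\le1$.

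For each horizontal direction $i$, put $X=df(e_i)$. The definition of sectional curvature together with $K^N\le\kappa$ gives
\[
\tilde R_N(X,W,W,X)\le\kappa\bigl(|X|^2|W|^2-\langle X,W\rangle^2\bigr).
\]
This is the same estimate as in the proof of Proposition \ref{prop:finiteness}(i), except that $\kappa_f$ is replaced by $\kappa$. The one point needing care is the degenerate case where $X$ and $W$ are linearly dependent. There both sides vanish, so the inequality still holds. Since $\kappa>0$, the right-hand side is at most $\kappa|X|^2|W|^2$. Summing over $i=1,\dots,m$ yields
\[
\mathcal R_T(W,W)\le\kappa|df_H|^2|W|^2\le\kappa|df_H|^2 ,
\]
using $\sum_i|df(e_i)|^2=|df_H|^2$ and $|W|\le1$.

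Multiplying by $e^{c}$ and integrating over $M$ gives $\int_Me^{c}\mathcal R_T(W,W)\,dv_g\le C_0$, hence $I_H(W,W)\ge -C_0$. There is no real obstacle in this argument. The only points to watch are that the positivity of $\kappa$ is what allows the term $-\kappa\langle X,W\rangle^2$ to be discarded, and that the curvature sign convention matches $\tilde R_N(X,Y,Y,X)=K^N\cdot(|X|^2|Y|^2-\langle X,Y\rangle^2)$, as in the Gauss equation \eqref{eq:gauss}.
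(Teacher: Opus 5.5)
Your proposal is correct and follows the paper's proof: you drop the two nonnegative terms of $I_H(W,W)$ and bound the curvature term pointwise by $\mathcal R_T(W,W)\le\kappa|df_H|^2|W|^2\le\kappa|df_H|^2$, as in Proposition~\ref{prop:finiteness}(i). You also handle the degenerate case and the role of $\kappa>0$ correctly; the paper leaves both implicit.
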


\begin{proof}
As in Proposition \ref{prop:finiteness}(i), $\mathcal R_T(W,W)\le\kappa|df_H|^2|W|^2\le\kappa|df_H|^2$, and the first two terms of $I_H(W,W)$ are nonnegative.
\end{proof}

\begin{theorem}\label{thm:index}
Let $\iota:N^n\to\mathbb R^{n+p}$ be an isometric immersion, let $H$ be bracket-generating, let $f:M\to N$ be exponentially subelliptic harmonic, and assume $K^N\le\kappa$ along $f(M)$ with $\kappa>0$. If $\widetilde{\mathcal T}(f)<0$, then $C_0>0$ and
\[
\ind(f)\ \ge\ \Bigl\lceil\frac{\bigl|\widetilde{\mathcal T}(f)\bigr|}{C_0}\Bigr\rceil .
\]
\end{theorem}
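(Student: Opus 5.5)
The plan is to apply Lemma \ref{lem:test-space} to the coefficient map $\Phi:\mathbb R^{n+p}\to\Gamma(f^{-1}TN)$, $\lambda\mapsto P\lambda:=\sum_a\lambda_aE_a^\top$, and to study the symmetric matrix $\mathbb I_{ab}:=I_H(E_a^\top,E_b^\top)$. Since $\Phi$ need not be injective, I first pass to a complement $V$ of $\ker\Phi$. Every negative definite subspace of the form $\lambda\mapsto\lambda^T\mathbb I\lambda$ meets $\ker\Phi$ trivially, because on $\ker\Phi$ the form vanishes. Projecting such a subspace onto $V$ along $\ker\Phi$ therefore preserves both its dimension and its negativity. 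This gives $\ind(f)\ge\ind(\mathbb I)$, the number of negative eigenvalues of $\mathbb I$.

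The key pointwise observation is that $P\lambda$ is the tangential projection along $f$ of the constant vector $\sum_a\lambda_aE_a$, so $|P\lambda|\le|\lambda|$ pointwise. Hence for a unit vector $\lambda$, Lemma \ref{lem:lower-bound} gives $\lambda^T\mathbb I\lambda=I_H(P\lambda,P\lambda)\ge-C_0$, and every eigenvalue of $\mathbb I$ is $\ge-C_0$. On the other hand, Theorem \ref{thm:master} gives $\tr\mathbb I=\widetilde{\mathcal T}(f)<0$. Write $k$ for the number of negative eigenvalues. Dropping the nonnegative eigenvalues gives
\[
\widetilde{\mathcal T}(f)\ \ge\ \sum_{\mu_j<0}\mu_j\ \ge\ -kC_0 .
\]
Granted $C_0>0$, this yields $k\ge|\widetilde{\mathcal T}(f)|/C_0$. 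Since $k$ is an integer, $k\ge\lceil|\widetilde{\mathcal T}(f)|/C_0\rceil$, and therefore $\ind(f)\ge k$ gives the claim.

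The remaining step, which I expect to be the only subtle point, is $C_0>0$. I would argue by contradiction. If $C_0=0$, then $df_H\equiv0$, since $e^{c}>0$, $\kappa>0$ and $|df_H|^2$ is continuous. Lemma \ref{lem:horiz-const} then makes $f$ constant, using that $H$ is bracket-generating and $M$ is connected. For a constant map $S_f=0$ and $\sigma(df(e_i))=0$, so $\widetilde{\mathscr Q}_N(f)\equiv0$ and $\widetilde{\mathcal T}(f)=0$. This contradicts $\widetilde{\mathcal T}(f)<0$.

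I also need to justify the inequality $|P\lambda|\le|\lambda|$ via the orthogonality of the projection in \eqref{eq:Etop}, with the identification through $d\iota$ being isometric. I would double-check that Lemma \ref{lem:lower-bound} requires only $|W|\le1$ pointwise; as stated, it does.
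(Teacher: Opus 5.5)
Your proposal is correct and follows essentially the same route as the paper. You bound the eigenvalues of $\mathbb I$ below by $-C_0$ via Lemma \ref{lem:lower-bound} and $|P\lambda|\le|\lambda|$, use $\tr\mathbb I=\widetilde{\mathcal T}(f)$ to count at least $|\widetilde{\mathcal T}(f)|/C_0$ negative eigenvalues, and transfer them to $\ind(f)$ via Lemma \ref{lem:test-space}. The differences are cosmetic: the paper gets injectivity by restricting $P$ directly to the negative eigenspace $\mathcal E_-$, where $P\lambda=0$ would force $\lambda^{\mathsf T}\mathbb I\lambda=0$, instead of passing to a complement of $\ker P$. For $C_0>0$ it reads off $S_f=0$ and $\sigma(df(e_i))=0$ directly from $df_H\equiv0$, so your detour through Lemma \ref{lem:horiz-const} is harmless but unnecessary.
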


\begin{proof}
First, $C_0>0$. If $C_0=0$ then $df_H\equiv0$, so $S_f=0$ and $\sigma(df(e_i))=0$ for each $i$, whence $\widetilde{\mathscr Q}_N(f)\equiv0$ and $\widetilde{\mathcal T}(f)=0$, contrary to hypothesis.

Consider the symmetric coefficient matrix $\mathbb I:=\bigl(I_H(E_a^\top,E_b^\top)\bigr)_{a,b=1}^{n+p}$ and the linear map
\[
P:\mathbb R^{n+p}\to\Gamma(f^{-1}TN),\qquad P\lambda:=\sum_{a=1}^{n+p}\lambda_aE_a^\top ,
\]
which assigns to $\lambda$ the tangential projection along $f$ of the constant ambient vector $\Lambda:=\sum_a\lambda_aE_a$. By bilinearity,
\begin{equation}\label{eq:gram-quadratic}
\lambda^{\mathsf T}\mathbb I\lambda=I_H\bigl(P\lambda,P\lambda\bigr),\qquad\lambda\in\mathbb R^{n+p},
\end{equation}
so $\mathbb I$ is the matrix, in the standard basis of $\mathbb R^{n+p}$, of the pulled-back form $I_H\circ(P\times P)$. If $|\lambda|=1$ then $|\Lambda|=1$, so $|P\lambda|\le1$ pointwise and Lemma \ref{lem:lower-bound} gives $\lambda^{\mathsf T}\mathbb I\lambda\ge-C_0$. Hence every eigenvalue of $\mathbb I$ is at least $-C_0$. On the other hand, $\tr\mathbb I=\widetilde{\mathcal T}(f)$ by \eqref{eq:master}.

Let $k$ be the number of negative eigenvalues of $\mathbb I$, counted with multiplicity. The nonnegative eigenvalues contribute nonnegatively to the trace, and each negative one is at least $-C_0$, so
\[
\widetilde{\mathcal T}(f)=\tr\mathbb I\ \ge\ -k\,C_0,\qquad\text{that is,}\qquad k\ \ge\ \frac{|\widetilde{\mathcal T}(f)|}{C_0}.
\]
Since $k$ is an integer, $k\ge\lceil|\widetilde{\mathcal T}(f)|/C_0\rceil$.

Let $\mathcal E_-\subset\mathbb R^{n+p}$ be spanned by the eigenvectors belonging to negative eigenvalues, so that $\dim\mathcal E_-=k$ and $\lambda^{\mathsf T}\mathbb I\lambda<0$ for $\lambda\in\mathcal E_-\setminus\{0\}$. The restriction $P|_{\mathcal E_-}$ is injective. If $\lambda\in\mathcal E_-$ and $P\lambda=0$, then $\lambda^{\mathsf T}\mathbb I\lambda=I_H(0,0)=0$ by \eqref{eq:gram-quadratic}, forcing $\lambda=0$. Lemma \ref{lem:test-space}, applied to $V=\mathcal E_-$ and $\Phi=P|_{\mathcal E_-}$, gives $\ind(f)\ge k$.
\end{proof}

This proves Theorem \ref{thm:B}.

\begin{theorem}\label{thm:rigidity}
Let $\iota$, $H$ and $f$ be as in Theorem \ref{thm:index}, and assume
\begin{equation}\label{eq:rigidity-hyp}
\widetilde{\mathscr Q}_N(f)\le0\ \text{on }M,
\qquad\text{and}\qquad
\widetilde{\mathscr Q}_N(f)(x)=0,
\end{equation}
only where $df_H(x)=0$. If $f$ is stable, then $f$ is constant.
\end{theorem}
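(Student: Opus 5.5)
We argue by contradiction, using the averaging identity of Theorem \ref{thm:master} together with Lemma \ref{lem:horiz-const}. Suppose $f$ is stable and nonconstant. Stability gives $I_H(E_a^\top,E_a^\top)\ge0$ for every $a=1,\dots,n+p$. Summing over $a$ and using \eqref{eq:master},
\[
0\ \le\ \sum_{a=1}^{n+p}I_H\bigl(E_a^\top,E_a^\top\bigr)=\widetilde{\mathcal T}(f)=\int_Me^{c}\,\widetilde{\mathscr Q}_N(f)\,dv_g\ \le\ 0,
\]
where the last inequality is the first half of hypothesis \eqref{eq:rigidity-hyp}, since $e^{c}>0$. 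Hence $\widetilde{\mathcal T}(f)=0$.

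The integrand $e^{c}\widetilde{\mathscr Q}_N(f)$ is continuous and nonpositive, and its integral vanishes. It therefore vanishes identically on $M$. Continuity holds because $\widetilde{\mathscr Q}_N(f)$ is a globally defined smooth function by Lemma \ref{lem:frame-indep}: it is a polynomial expression in $df$ and the smooth tensors $B,\eta$ of the immersion, evaluated along $f$. So $\widetilde{\mathscr Q}_N(f)\equiv0$ on $M$. The second half of \eqref{eq:rigidity-hyp} states that $\widetilde{\mathscr Q}_N(f)$ vanishes only where $df_H=0$, so $df_H\equiv0$ on $M$.

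Since $H$ is bracket-generating and $M$ is connected, Lemma \ref{lem:horiz-const} then forces $f$ to be constant. This contradicts the assumption and proves the theorem. Equivalently, without contradiction: if $f$ were nonconstant, $\{df_H\ne0\}$ would be a nonempty open set on which $\widetilde{\mathscr Q}_N(f)<0$, so $\widetilde{\mathcal T}(f)<0$, and Theorem \ref{thm:master} would give an $a_0$ with $I_H(E_{a_0}^\top,E_{a_0}^\top)<0$, contradicting stability.

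There is no substantial obstacle here; the argument is a direct consequence of the trace identity. The only points to check are the continuity of $\widetilde{\mathscr Q}_N(f)$, needed to pass from vanishing integral to pointwise vanishing (or to strict negativity on an open set), and the correct reading of the hypothesis. It is a pointwise implication, $\widetilde{\mathscr Q}_N(f)(x)=0\Rightarrow df_H(x)=0$, and this is exactly what the horizontal constancy lemma needs.
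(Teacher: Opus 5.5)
Your proof is correct and follows the paper's argument essentially verbatim. Stability together with the trace identity \eqref{eq:master} and the sign hypothesis forces $\widetilde{\mathcal T}(f)=0$; continuity and nonpositivity of the integrand then give $\widetilde{\mathscr Q}_N(f)\equiv0$, the second hypothesis gives $df_H\equiv0$, and Lemma \ref{lem:horiz-const} concludes. Your contradiction framing is unnecessary, since the direct argument already yields constancy, but it does no harm.
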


\begin{proof}
Stability gives $\sum_aI_H(E_a^\top,E_a^\top)\ge0$. By \eqref{eq:master} this sum equals $\widetilde{\mathcal T}(f)$, which is $\le0$ by the first hypothesis. Hence $\widetilde{\mathcal T}(f)=0$. The integrand $e^{c}\widetilde{\mathscr Q}_N(f)$ is continuous, nonpositive and of vanishing integral, so it vanishes identically and $\widetilde{\mathscr Q}_N(f)\equiv0$. The second hypothesis gives $df_H\equiv0$, and Lemma \ref{lem:horiz-const} shows that $f$ is constant.
\end{proof}

\section{The Parallel Test Space}\label{sec:par}

We turn to the second test space. It may be trivial, but when it is not, the index form on it is computed rather than estimated.

\begin{definition}\label{def:Pf}
Let
\[
P(f):=\bigl\{W\in\Gamma(f^{-1}TN):\ \tilde\nabla_YW=0\ \text{for all }Y\in\Gamma(TM)\bigr\}
\]
be the space of parallel sections of the pullback bundle, and put $k(f):=\dim P(f)$.
\end{definition}

A parallel section is determined by its value at any point $x_0$, and a vector of $T_{f(x_0)}N$ extends to a parallel section exactly when it is invariant under parallel transport along all loops at $x_0$. Thus evaluation at $x_0$ identifies $P(f)$ with the subspace of $T_{f(x_0)}N$ fixed by the holonomy group of the pullback connection. In particular $k(f)\le n$, with equality exactly when $f^{-1}TN$ admits a global parallel orthonormal frame, that is, when the pullback holonomy is trivial. If the holonomy group has no nonzero fixed vector, then $P(f)=\{0\}$ and the parallel test space is trivial. Note that $k(f)=n$ is stronger than the topological triviality of $f^{-1}TN$: for instance, the identity map of a non-flat $2$-torus has trivial tangent bundle but no global parallel frame. Since the connection is metric, inner products of parallel sections are constant, so $P(f)$ admits a basis $W_1,\dots,W_k$ that is orthonormal at every point. For each $\alpha$ put
\begin{equation}\label{eq:xi-rho}
\xi_\alpha:=\sum_{i=1}^m\bigl\langle W_\alpha,df(e_i)\bigr\rangle e_i\in\Gamma(H),
\qquad
\rho_{\alpha\beta}:=\mathcal R_T(W_\alpha,W_\beta)\in C^\infty(M).
\end{equation}
Here $\xi_\alpha$ is the $g_H$-dual of the one-form $X\mapsto\langle W_\alpha,df(X)\rangle$ on $H$, hence independent of the frame, and $(\rho_{\alpha\beta})$ is a symmetric matrix of functions by the symmetry of $\mathcal R_T$.

\begin{theorem}[Reduction]\label{thm:reduction}
Let $f$ be exponentially subelliptic harmonic, $M$ closed, $k=k(f)\ge1$, and let
\[
\Phi:C^\infty(M;\mathbb R^k)\to\Gamma(f^{-1}TN),\qquad \Phi(v):=\sum_{\alpha=1}^kv_\alpha W_\alpha .
\]
Then $\Phi$ is injective and $I_H(\Phi v,\Phi v)=\mathcal Q_f(v)$, where
\begin{equation}\label{eq:reduced}
\mathcal Q_f(v):=\int_Me^{c}\Bigl[\Bigl(\sum_{\alpha}\bigl\langle\nabla^Hv_\alpha,\xi_\alpha\bigr\rangle\Bigr)^2+\sum_{\alpha}\bigl|\nabla^Hv_\alpha\bigr|^2-\sum_{\alpha,\beta}\rho_{\alpha\beta}v_\alpha v_\beta\Bigr]dv_g .
\end{equation}
Hence $\ind(f)\ge\ind(\mathcal Q_f)$. If $k(f)=n$, then $\Phi$ is bijective, $\ind(f)=\ind(\mathcal Q_f)$, and $\nul(f)$ equals the dimension of the radical of $\mathcal Q_f$ on the completion $\mathcal H^1_H(M;\mathbb R^n)$ of $C^\infty(M;\mathbb R^n)$ in the norm $\bigl(\int_Me^{c}(\sum_\alpha|\nabla^Hv_\alpha|^2+|v|^2)dv_g\bigr)^{1/2}$.
\end{theorem}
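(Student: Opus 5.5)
The plan is a direct substitution of $W=\Phi v$ into \eqref{eq:index-polarized}, followed by Lemma \ref{lem:test-space}, and, in the case $k(f)=n$, a comparison of the completions that define the two radicals.

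\textbf{Injectivity of $\Phi$.} The $W_\alpha$ are orthonormal at every point. Hence $\Phi v=0$ forces $v_\alpha=\langle\Phi v,W_\alpha\rangle=0$ pointwise for every $\alpha$.

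\textbf{The identity $I_H(\Phi v,\Phi v)=\mathcal Q_f(v)$.} Since $\tilde\nabla W_\alpha=0$, the Leibniz rule gives
\[
\tilde\nabla_{e_i}\Phi v=\sum_\alpha(e_iv_\alpha)W_\alpha .
\]
I then compute the three terms of the integrand separately.
\begin{itemize}
\item The first-order term becomes $\mathcal F(\Phi v)=\sum_{i,\alpha}(e_iv_\alpha)\langle W_\alpha,df(e_i)\rangle=\sum_\alpha\langle\nabla^Hv_\alpha,\xi_\alpha\rangle$, using the definition of $\xi_\alpha$.
\item Pointwise orthonormality gives $|\nabla_H\Phi v|^2=\sum_i\sum_\alpha(e_iv_\alpha)^2=\sum_\alpha|\nabla^Hv_\alpha|^2$.
\item Bilinearity of $\mathcal R_T$ gives $\mathcal R_T(\Phi v,\Phi v)=\sum_{\alpha,\beta}\rho_{\alpha\beta}v_\alpha v_\beta$.
\end{itemize}
Integrating against $e^{c}dv_g$ yields \eqref{eq:reduced}. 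Lemma \ref{lem:test-space} with $V=C^\infty(M;\mathbb R^k)$ then gives $\ind(f)\ge\ind(\mathcal Q_f)$.

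\textbf{The case $k(f)=n$: surjectivity and the index.} Here the $W_\alpha$ form a global orthonormal frame of $f^{-1}TN$. Every smooth section $W$ can therefore be written $W=\sum_\alpha\langle W,W_\alpha\rangle W_\alpha$, with smooth coefficients $\langle W,W_\alpha\rangle$. So $\Phi$ is bijective, and the equality case of Lemma \ref{lem:test-space} gives $\ind(f)=\ind(\mathcal Q_f)$.

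\textbf{The case $k(f)=n$: the nullity.} The nullity is defined on the completion $\mathcal H^1_H$, so the smooth-level identity is not enough on its own. I will check that $\Phi$ is an isometry for the two norms. By the computation above,
\[
|\nabla_H\Phi v|^2+|\Phi v|^2=\sum_\alpha|\nabla^Hv_\alpha|^2+|v|^2
\]
pointwise. Hence $\Phi$ extends to a unitary isomorphism from $\mathcal H^1_H(M;\mathbb R^n)$ onto $\mathcal H^1_H(f^{-1}TN)$.

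By Proposition \ref{prop:finiteness}(i), $I_H$ is bounded on $\mathcal H^1_H$. The polarized identity $I_H(\Phi v,\Phi w)=\mathcal Q_f(v,w)$ holds on smooth $v,w$, so it extends by continuity, and $\mathcal Q_f$ inherits this continuous extension. Consequently $\Phi$ carries the radical of $\mathcal Q_f$ onto the radical of $I_H$, and the two nullities coincide.

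The main point needing care is this last extension step. One must make sure that the radical of $\mathcal Q_f$ is taken with respect to the continuous extension of $\mathcal Q_f$, and that this extension is the one transported by $\Phi$ from $I_H$; everything else is direct computation.
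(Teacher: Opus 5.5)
Your proposal is correct and follows the paper's proof essentially step for step. You use the same three pointwise identities obtained from $\tilde\nabla W_\alpha=0$ and pointwise orthonormality, and Lemma~\ref{lem:test-space} for both index statements. For the nullity, you make the same observation as the paper: $\Phi$ is an isometry for the weighted $\mathcal H^1_H$ norms, so it extends to an isomorphism of the completions that carries the continuous extension of $\mathcal Q_f$ to that of $I_H$, and hence radical onto radical.
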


\begin{proof}
Injectivity follows from pointwise orthonormality of the $W_\alpha$: $|\Phi v|^2=|v|^2$. Since $\tilde\nabla W_\alpha=0$,
\[
\tilde\nabla_{e_i}\Phi v=\sum_{\alpha=1}^k\bigl(e_iv_\alpha\bigr)W_\alpha,
\qquad\text{hence}\qquad
\bigl|\nabla_H\Phi v\bigr|^2=\sum_{\alpha=1}^k\bigl|\nabla^Hv_\alpha\bigr|^2 ,
\]
again by orthonormality. Moreover
\[
\mathcal F(\Phi v)=\sum_{i=1}^m\sum_{\alpha=1}^k\bigl(e_iv_\alpha\bigr)\bigl\langle W_\alpha,df(e_i)\bigr\rangle=\sum_{\alpha=1}^k\bigl\langle\nabla^Hv_\alpha,\xi_\alpha\bigr\rangle ,
\]
and bilinearity gives $\mathcal R_T(\Phi v,\Phi v)=\sum_{\alpha,\beta}\rho_{\alpha\beta}v_\alpha v_\beta$. Substituting into \eqref{eq:index-polarized} gives \eqref{eq:reduced}, and Lemma \ref{lem:test-space} gives $\ind(f)\ge\ind(\mathcal Q_f)$.

If $k(f)=n$, the $W_\alpha$ form a global orthonormal frame of $f^{-1}TN$, so every section is of the form $\Phi v$ and $\Phi$ is bijective. Lemma \ref{lem:test-space} gives $\ind(f)=\ind(\mathcal Q_f)$. By the two identities above, $\Phi$ is an isometry for the norms $\|\cdot\|_{\mathcal H^1_H}$. It therefore extends to an isometric isomorphism $\mathcal H^1_H(M;\mathbb R^n)\to\mathcal H^1_H$ which, by continuity and Proposition \ref{prop:finiteness}(i), carries the continuous extension of $\mathcal Q_f$ to that of $I_H$. The radicals correspond, which proves the statement on the nullity.
\end{proof}

This proves Theorem \ref{thm:C}. When $k>1$, the first term of \eqref{eq:reduced} contains the second-order cross terms $2\langle\nabla^Hv_\alpha,\xi_\alpha\rangle\langle\nabla^Hv_\beta,\xi_\beta\rangle$, so $\mathcal Q_f$ is a coupled form on vector-valued functions and in general does not split into scalar forms. If $\xi_\alpha=0$ for all but at most one index and $(\rho_{\alpha\beta})$ is diagonal, the form decouples into scalar blocks. All blocks with $\xi_\alpha=0$ are then of Schr\"odinger type with respect to the weighted horizontal Laplacian, while a block with $\xi_\alpha\ne0$ contains the additional term $\langle\nabla^Hv_\alpha,\xi_\alpha\rangle^2$, which modifies its principal part. This is the situation of Section \ref{sec:heis}. Restricting to constant $v$ gives a finite-dimensional criterion.

\begin{corollary}\label{cor:J}
Let $\mathbb J_{\alpha\beta}:=I_H(W_\alpha,W_\beta)=-\int_Me^{c}\rho_{\alpha\beta}\,dv_g$. Then $\ind(f)$ is at least the number of negative eigenvalues of the symmetric $k\times k$ matrix $\mathbb J$.
\end{corollary}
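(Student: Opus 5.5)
The plan is to restrict the reduced form of Theorem \ref{thm:reduction} to constant coefficient vectors and then apply the test-space principle of Lemma \ref{lem:test-space}. Concretely, I take $V=\mathbb R^k$, viewed as the constant maps inside $C^\infty(M;\mathbb R^k)$, with $\Phi_0(v):=\sum_\alpha v_\alpha W_\alpha$. This map is injective because the $W_\alpha$ are pointwise orthonormal, so $|\Phi_0v|^2=|v|^2$.

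First I identify the matrix. For constant $v$ we have $\nabla^Hv_\alpha=0$, so in \eqref{eq:reduced} both the squared coupling term $\bigl(\sum_\alpha\langle\nabla^Hv_\alpha,\xi_\alpha\rangle\bigr)^2$ and the gradient term $\sum_\alpha|\nabla^Hv_\alpha|^2$ drop out. This leaves
\[
I_H(\Phi_0v,\Phi_0v)=-\int_Me^{c}\sum_{\alpha,\beta}\rho_{\alpha\beta}v_\alpha v_\beta\,dv_g=v^{\mathsf T}\mathbb Jv .
\]
The same conclusion follows directly from \eqref{eq:index-polarized}. Parallelism gives $\tilde\nabla_{e_i}W_\alpha=0$, hence $\mathcal F(W_\alpha)=0$, and therefore $I_H(W_\alpha,W_\beta)=-\int_Me^{c}\mathcal R_T(W_\alpha,W_\beta)\,dv_g=-\int_M e^c\rho_{\alpha\beta}\,dv_g$. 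This confirms that the two expressions for $\mathbb J_{\alpha\beta}$ agree. The matrix $\mathbb J$ is symmetric because $\mathcal R_T$ is symmetric.

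Next I diagonalize $\mathbb J$ orthogonally. The span $\mathcal E_-$ of the eigenvectors with negative eigenvalues has dimension equal to the number of negative eigenvalues, and $v^{\mathsf T}\mathbb Jv<0$ on $\mathcal E_-\setminus\{0\}$. Applying Lemma \ref{lem:test-space} with $V=\mathcal E_-$ and $\Phi=\Phi_0|_{\mathcal E_-}$, which is injective, gives $\ind(f)\ge\dim\mathcal E_-$. Alternatively one can combine $\ind(\mathcal Q_f)\ge\ind(\mathcal Q_f|_{\mathbb R^k})$ with Theorem \ref{thm:reduction}.

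I see no real obstacle here. The only point needing care is the identification of the two expressions for $\mathbb J$, which uses $\tilde\nabla W_\alpha=0$ to remove the $\mathcal F$ and gradient terms.
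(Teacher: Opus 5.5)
Your proposal is correct and follows essentially the same route as the paper: restrict $\Phi$ to constant $v$, where \eqref{eq:reduced} reduces to $v^{\mathsf T}\mathbb J v$, and then apply Lemma \ref{lem:test-space} on the span of the negative eigenvectors of $\mathbb J$. Your extra check that $\mathcal F(W_\alpha)=0$ in \eqref{eq:index-polarized}, so that the two expressions for $\mathbb J_{\alpha\beta}$ agree, is a harmless addition that the paper leaves implicit.
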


\begin{proof}
For constant $v=\lambda\in\mathbb R^k$ one has $\nabla^Hv_\alpha=0$, so \eqref{eq:reduced} gives $\mathcal Q_f(\lambda)=\lambda^{\mathsf T}\mathbb J\lambda$. Applying Lemma \ref{lem:test-space} to $\Phi$ restricted to the span of the negative eigenvectors of $\mathbb J$ gives the claim.
\end{proof}

\begin{corollary}\label{cor:positive-curv}
Assume that $H$ is bracket-generating, that $K^N>0$ along $f(M)$, and that for every nonzero $W\in P(f)$ there is a point $x\in M$ with $\image df_H(x)\not\subset\mathbb R\,W(x)$. Then $\mathbb J$ is negative definite and $\ind(f)\ge k(f)$. The last hypothesis holds if $f$ is nonconstant and every $W\in P(f)$ is pointwise orthogonal to $\image df_H$.
\end{corollary}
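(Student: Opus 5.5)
By Corollary \ref{cor:J}, it suffices to show that $\mathbb J$ is negative definite, because then all $k(f)$ of its eigenvalues are negative and $\ind(f)\ge k(f)$. For constant $\lambda\in\mathbb R^k$ put $W:=\sum_\alpha\lambda_\alpha W_\alpha\in P(f)$. Then
\[
\lambda^{\mathsf T}\mathbb J\lambda=-\int_Me^{c}\,\mathcal R_T(W,W)\,dv_g=-\int_Me^{c}\sum_{i=1}^m\tilde R_N\bigl(df(e_i),W,W,df(e_i)\bigr)\,dv_g .
\]
Each summand equals $K^N\bigl(df(e_i),W\bigr)\bigl(|df(e_i)|^2|W|^2-\langle df(e_i),W\rangle^2\bigr)$ when the two vectors span a plane, and is zero otherwise. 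Since $K^N>0$, every summand is nonnegative. Hence $\lambda^{\mathsf T}\mathbb J\lambda\le0$, with equality exactly when the integrand vanishes identically.

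It remains to exclude equality when $\lambda\ne0$. In that case $W\ne0$ is parallel, so $|W|=|\lambda|$ is a nonzero constant. Pointwise, $\mathcal R_T(W,W)(x)=0$ forces each $df(e_i)$ to be parallel to $W(x)$, that is, $\image df_H(x)\subset\mathbb R\,W(x)$. The hypothesis gives a point $x$ where this inclusion fails. There some $df(e_i)$ is not parallel to $W(x)$, so the corresponding summand is strictly positive. By continuity it stays positive on a neighbourhood of $x$, and the integral is strictly positive. Thus $\lambda^{\mathsf T}\mathbb J\lambda<0$ for every $\lambda\ne0$.

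For the last assertion, let $f$ be nonconstant and let each $W\in P(f)$ be pointwise orthogonal to $\image df_H$. Since $H$ is bracket-generating, Lemma \ref{lem:horiz-const} gives a point $x$ with $df_H(x)\ne0$. For $W\ne0$ in $P(f)$, $W(x)\ne0$ because parallel sections have constant length. A nonzero vector of $\image df_H(x)$ is orthogonal to $W(x)$, so it cannot lie in $\mathbb R\,W(x)$, and the hypothesis holds.

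The only delicate step is the strictness argument. It uses two facts: the constant norm of parallel sections, so that $W(x)\ne0$ wherever it is needed, and continuity of the integrand, so that positivity at one point yields a positive integral.
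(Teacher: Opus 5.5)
Your proposal is correct and follows essentially the same route as the paper. Both arguments use pointwise nonnegativity of $\mathcal R_T(W,W)$ from $K^N>0$ and the hypothesis plus continuity to get $I_H(W,W)=\lambda^{\mathsf T}\mathbb J\lambda<0$, then conclude with Corollary \ref{cor:J}. The last assertion is handled in both by combining Lemma \ref{lem:horiz-const} with the fact that a parallel section vanishes nowhere.
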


\begin{proof}
Let $W\in P(f)\setminus\{0\}$. Since $W$ is parallel and nonzero, it vanishes nowhere. Fix $i$ and a point. If $df(e_i)$ and $W$ are linearly independent there, they span a plane $\pi$ and
\[
\tilde R_N\bigl(df(e_i),W,W,df(e_i)\bigr)=K^N(\pi)\Bigl(|df(e_i)|^2|W|^2-\langle df(e_i),W\rangle^2\Bigr)>0 .
\]
If they are linearly dependent, then $\tilde R_N(df(e_i),W,W,df(e_i))=0$ by the skew-symmetry of the curvature tensor. Hence $\mathcal R_T(W,W)\ge0$, and $\mathcal R_T(W,W)(x)=0$ only if every $df(e_i)(x)$ lies in $\mathbb RW(x)$, that is, only if $\image df_H(x)\subset\mathbb RW(x)$. By hypothesis this fails at some point, hence on a nonempty open set, so
\[
I_H(W,W)=-\int_Me^{c}\mathcal R_T(W,W)\,dv_g<0 .
\]
Thus $\mathbb J$ is negative definite, and Corollary \ref{cor:J} gives $\ind(f)\ge k(f)$. If $W\perp\image df_H$ pointwise, the inclusion $\image df_H(x)\subset\mathbb RW(x)$ forces $df_H(x)=0$, and by Lemma \ref{lem:horiz-const} a nonconstant $f$ has a point where this fails.
\end{proof}

\begin{remark}\label{rem:parallel-limits}
If $K^N\le0$ then $\rho\le0$ as a matrix, so $\mathcal Q_f\ge0$ and the parallel test space gives nothing, in accordance with the stability statement following Definition \ref{def:index-form}. The method is useful when nonzero parallel sections exist and the resulting reduced form has negative directions. Maps whose images lie in totally geodesic subspheres provide an important class of examples. See Proposition \ref{prop:subsphere}.
\end{remark}

\section{Applications to Target Manifolds}\label{sec:targets}

\subsection{Round Spheres}

Throughout this subsection $N=S^n(r)\subset\mathbb R^{n+1}$ carries the inward unit normal $\nu=-x/r$, so that
\[
B(X,Y)=\tfrac1r\langle X,Y\rangle\nu,\qquad
\mathcal A^\nu=\tfrac1r\operatorname{Id},\qquad
\eta=\tfrac nr\nu,\qquad
\sigma(X)=\tfrac1{r^2}|X|^2,\qquad
K^N\equiv\tfrac1{r^2}.
\]

\begin{theorem}\label{thm:sphere}
For every smooth $f:M\to S^n(r)$,
\begin{equation}\label{eq:Qsphere}
\widetilde{\mathscr Q}_{S^n(r)}(f)=\frac{1}{r^2}\,|df_H|^2\Bigl(|df_H|^2-(n-2)\Bigr).
\end{equation}
Let $n\ge3$, let $H$ be bracket-generating, and let $f$ be exponentially subelliptic harmonic with $|df_H|^2<n-2$ on $M$. Then $f$ is unstable if it is nonconstant, and constant if it is stable.
\end{theorem}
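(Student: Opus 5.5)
The plan has two parts: a pointwise computation of \eqref{eq:Qsphere} from the sphere data listed just above, and then the instability and rigidity statements as direct applications of Theorem \ref{thm:master} and Theorem \ref{thm:rigidity}.

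\textbf{The formula \eqref{eq:Qsphere}.} Put $X_i:=df(e_i)$ and $s:=\sum_i|X_i|^2=|df_H|^2$. Substitute the data of $S^n(r)$ into Definition \ref{def:howard} term by term:
\[
S_f=\sum_iB(X_i,X_i)=\tfrac{s}{r}\,\nu,\qquad |S_f|^2=\tfrac{s^2}{r^2},\qquad \langle S_f,\eta\rangle=\tfrac{s}{r}\cdot\tfrac{n}{r}=\tfrac{ns}{r^2},\qquad 2\sum_i\sigma(X_i)=\tfrac{2s}{r^2}.
\]
Adding these gives $\widetilde{\mathscr Q}_{S^n(r)}(f)=r^{-2}\bigl(s^2-ns+2s\bigr)=r^{-2}s\bigl(s-(n-2)\bigr)$, which is \eqref{eq:Qsphere}. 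This is the computation of Example \ref{ex:counterexample} with the radius restored.

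\textbf{Rigidity.} Assume $n\ge3$ and $s<n-2$ on $M$. Then \eqref{eq:Qsphere} gives $\widetilde{\mathscr Q}_{S^n(r)}(f)\le0$ everywhere. Equality holds only where $s=0$, since the factor $s-(n-2)$ is strictly negative. Hence both parts of hypothesis \eqref{eq:rigidity-hyp} hold. Moreover $K^N\equiv r^{-2}>0$, so $\kappa=r^{-2}$ works in Theorem \ref{thm:index}. Theorem \ref{thm:rigidity} then shows that a stable $f$ is constant.

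\textbf{Instability.} Let $f$ be nonconstant. By Lemma \ref{lem:horiz-const}, which uses that $H$ is bracket-generating, the set $\{df_H\ne0\}$ is open and nonempty. On that set $\widetilde{\mathscr Q}_{S^n(r)}(f)<0$. Since the integrand $e^c\widetilde{\mathscr Q}_{S^n(r)}(f)$ is continuous and nonpositive, this gives $\widetilde{\mathcal T}(f)<0$, and Theorem \ref{thm:master} yields instability. This is also the contrapositive of the rigidity statement.

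No step is a real obstacle. The only points to watch are the sign convention for $\nu$, which must be the same in $B$ and $\eta$ so that the product $\langle S_f,\eta\rangle$ is positive, and the use of $n\ge3$ to make $s-(n-2)<0$ strict, so that equality occurs only where $df_H=0$.
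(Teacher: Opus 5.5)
Your proposal is correct and follows the paper's proof essentially step for step: you substitute $S_f=\frac{s}{r}\nu$, $\eta=\frac nr\nu$ and $\sum_i\sigma(df(e_i))=\frac{s}{r^2}$ into Definition \ref{def:howard}, obtain instability from Lemma \ref{lem:horiz-const} and Theorem \ref{thm:master}, and obtain rigidity by verifying \eqref{eq:rigidity-hyp} for Theorem \ref{thm:rigidity} (your explicit check of $K^N\le r^{-2}$ is a harmless extra). One small correction to your closing remark: the strictness of $s-(n-2)<0$ comes from the hypothesis $s<n-2$ itself, while $n\ge3$ only ensures that this hypothesis can be satisfied by a nonconstant map.
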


\begin{proof}
From $B(X,Y)=\frac1r\langle X,Y\rangle\nu$ we get $S_f=\frac1r|df_H|^2\nu$ and $\sum_i\sigma(df(e_i))=\frac1{r^2}|df_H|^2$. Substituting into \eqref{eq:Qdef},
\[
\widetilde{\mathscr Q}_{S^n(r)}(f)
=\frac{|df_H|^4}{r^2}-\frac{n|df_H|^2}{r^2}+\frac{2|df_H|^2}{r^2}
=\frac{|df_H|^2}{r^2}\Bigl(|df_H|^2-(n-2)\Bigr).
\]
Assume now $n\ge3$ and $|df_H|^2<n-2$. Then $\widetilde{\mathscr Q}_{S^n(r)}(f)\le0$, with strict inequality precisely where $df_H\ne0$. If $f$ is nonconstant, Lemma \ref{lem:horiz-const} shows that $\{df_H\ne0\}$ is nonempty and open, so $\widetilde{\mathcal T}(f)<0$ and $f$ is unstable by Theorem \ref{thm:master}.

For rigidity we verify \eqref{eq:rigidity-hyp}. The first condition has just been shown. Viewed as a function of $|df_H|^2$, the right-hand side of \eqref{eq:Qsphere} has the roots $0$ and $n-2$, and the latter is excluded by the strict hypothesis. Hence $\widetilde{\mathscr Q}_{S^n(r)}(f)(x)=0$ forces $df_H(x)=0$. Theorem \ref{thm:rigidity} applies.
\end{proof}

\begin{remark}\label{rem:leung}
For $n\le2$ the condition $|df_H|^2<n-2$ admits no solution with $df_H\ne0$, so the criteria are nontrivial only for $n\ge3$. The identity \eqref{eq:Qsphere} holds for all $n$. It is instructive to compare with the Dirichlet case. At a critical point of the horizontal Dirichlet energy $E_D(f)=\frac12\int_M|df_H|^2\,dv_g$, the index form is obtained from \eqref{eq:index-polarized} by replacing the weight $e^{c}$ by $1$ and omitting the exponential coupling term $\mathcal F(W)\mathcal F(W')$. The sum over the projected frame on $S^n(r)$ then has integrand
\[
2\sum_i\sigma\bigl(df(e_i)\bigr)-\langle S_f,\eta\rangle=-\frac{n-2}{r^2}|df_H|^2 ,
\]
negative for $n\ge3$ with no smallness condition. This is the mechanism behind Leung's theorem that nonconstant harmonic maps into $S^n$, $n\ge3$, are unstable \cite{Leu82}. Its counterpart for maps from $S^n$ is due to Xin \cite{Xin80}. The quartic term $|S_f|^2=r^{-2}|df_H|^4$ produced by the exponential factor competes with it and makes the averaged trace positive once $|df_H|^2>n-2$ everywhere. The pointwise smallness condition therefore guarantees negativity of the averaged trace. It is sufficient but not necessary for instability, as the examples of Section \ref{sec:heis} show.
\end{remark}

\begin{corollary}\label{cor:index-sphere}
Let $n\ge3$, let $H$ be bracket-generating, and let $f:M\to S^n(r)$ be a nonconstant exponentially subelliptic harmonic map. Write $s:=|df_H|^2$ and
\begin{equation}\label{eq:weighted-mean}
\bar s_w:=\frac{\displaystyle\int_Me^{s/2}s^2\,dv_g}{\displaystyle\int_Me^{s/2}s\,dv_g}\,.
\end{equation}
With $\kappa=r^{-2}$ in Theorem \ref{thm:index},
\begin{equation}\label{eq:sphere-ratio}
\frac{\widetilde{\mathcal T}(f)}{C_0}=\bar s_w-(n-2),
\end{equation}
and consequently
\[
\ind(f)\ \ge\ \max\bigl\{0,\ \lceil(n-2)-\bar s_w\rceil\bigr\}\ \ge\ \max\bigl\{0,\ \lceil(n-2)-\textstyle\sup_Ms\rceil\bigr\}.
\]
\end{corollary}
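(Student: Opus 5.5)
The corollary follows by specializing Theorem \ref{thm:index} to $N=S^n(r)$ with $\kappa=r^{-2}$. Since $K^N\equiv r^{-2}$, the curvature hypothesis holds, and it only remains to compute the ratio $\widetilde{\mathcal T}(f)/C_0$ explicitly.

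First I evaluate both integrals. By \eqref{eq:Qsphere} and $c=s/2$,
\[
\widetilde{\mathcal T}(f)=\frac1{r^2}\int_Me^{s/2}s\bigl(s-(n-2)\bigr)\,dv_g=\frac1{r^2}\Bigl(\int_Me^{s/2}s^2\,dv_g-(n-2)\int_Me^{s/2}s\,dv_g\Bigr),
\]
while $C_0=r^{-2}\int_Me^{s/2}s\,dv_g$. Because $f$ is nonconstant, Lemma \ref{lem:horiz-const} makes $\{s>0\}$ a nonempty open set, so $C_0>0$ and $\bar s_w$ is well defined. Dividing, the factor $r^{-2}$ cancels and \eqref{eq:sphere-ratio} follows at once.

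For the index bound I split into two cases.
\begin{itemize}
\item If $\bar s_w\ge n-2$, the claimed bound is $0$ and holds trivially.
\item If $\bar s_w<n-2$, then $\widetilde{\mathcal T}(f)=C_0(\bar s_w-(n-2))<0$. Theorem \ref{thm:index} then gives $\ind(f)\ge\lceil|\widetilde{\mathcal T}(f)|/C_0\rceil=\lceil(n-2)-\bar s_w\rceil$.
\end{itemize}

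For the second inequality, $\bar s_w$ is the mean of $s$ with respect to the positive measure $e^{s/2}s\,dv_g$, so $\bar s_w\le\sup_Ms$. Both $t\mapsto\lceil t\rceil$ and $\max\{0,\cdot\}$ are monotone, which gives the comparison.

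No step is a real obstacle. The only point that needs care is confirming that $C_0>0$ independently of the sign of $\widetilde{\mathcal T}(f)$, which is why I use nonconstancy here. Theorem \ref{thm:index} itself derives $C_0>0$ only from $\widetilde{\mathcal T}(f)<0$.
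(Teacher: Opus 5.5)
Your proposal is correct and follows the paper's proof essentially step for step. You compute both integrals from \eqref{eq:Qsphere} with $c=s/2$, use nonconstancy via Lemma \ref{lem:horiz-const} to get $C_0>0$ in all cases, split on the sign of $\bar s_w-(n-2)$ before invoking Theorem \ref{thm:index}, and get the second inequality from $\bar s_w\le\sup_Ms$, exactly as the paper does.
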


\begin{proof}
Since $f$ is nonconstant, Lemma \ref{lem:horiz-const} gives $\int_Me^{s/2}s\,dv_g>0$, so $\bar s_w$ is well defined and positive. Moreover $s^2\le(\sup_Ms)\,s$ pointwise, whence $\bar s_w\le\sup_Ms$. Here $c=s/2$, so by \eqref{eq:Qsphere} and Lemma \ref{lem:lower-bound},
\begin{align}
\widetilde{\mathcal T}(f)&=\frac1{r^2}\Bigl[\int_Me^{s/2}s^2\,dv_g-(n-2)\int_Me^{s/2}s\,dv_g\Bigr],\\
C_0&=\frac1{r^2}\int_Me^{s/2}s\,dv_g>0 ,
\end{align}
and division gives \eqref{eq:sphere-ratio}. If $\bar s_w\ge n-2$, both asserted bounds are trivial. If $\bar s_w<n-2$, then $\widetilde{\mathcal T}(f)<0$ by \eqref{eq:sphere-ratio}, and Theorem \ref{thm:index} gives the first bound. The second follows from $\bar s_w\le\sup_Ms$.
\end{proof}

\begin{remark}
The bound of Corollary \ref{cor:index-sphere} is nontrivial exactly when $\bar s_w<n-2$, which by \eqref{eq:sphere-ratio} is precisely the condition $\widetilde{\mathcal T}(f)<0$ of Theorem \ref{thm:index}. For spherical targets the trace-to-curvature quotient is evaluated exactly, without replacing the weighted mean by a pointwise supremum, and the radius cancels. The resulting index bound need not be sharp: it uses a single finite-dimensional test family, the uniform lower bound $-C_0$ for every negative eigenvalue, and the trace in place of the full negative spectrum. For instance, for $n=3$ and the solutions of Section \ref{sec:heis} with $\Sigma=\frac12$ it gives $\ind(f)\ge1$, whereas Theorem \ref{thm:heis-spectral} gives $\ind(f)\ge2$. The two sources of loss are separated in Remark \ref{rem:ext-matrix}. The weight $e^{s/2}s\,dv_g$ favours the region where $s$ is large: by the Cauchy--Schwarz inequality for the measure $e^{s/2}dv_g$,
\[
\frac{\int_Me^{s/2}s\,dv_g}{\int_Me^{s/2}dv_g}\ \le\ \bar s_w\ \le\ \sup_Ms .
\]
The index bound is an integral condition, whereas the rigidity argument of Theorem \ref{thm:sphere} uses the sign condition at every point. Finally, for the solutions of Section \ref{sec:heis} the density $s$ is constant, so there $\bar s_w=\sup_Ms$ and the weighted bound brings no improvement.
\end{remark}

The parallel test space applies to spheres when the image degenerates.

\begin{proposition}\label{prop:subsphere}
Let $P\subset\mathbb R^{n+1}$ be a linear subspace with $\dim P=k+1$, $1\le k\le n-1$, and put $S^k(r):=S^n(r)\cap P$, a totally geodesic subsphere. Let $H$ be bracket-generating and let $f:M\to S^n(r)$ be a nonconstant exponentially subelliptic harmonic map with $f(M)\subset S^k(r)$. For $w\in P^\perp$, the section $W_w$ of $f^{-1}TS^n(r)$ with constant value $w$ lies in $P(f)$, and
\begin{equation}\label{eq:subsphere-index}
I_H\bigl(W_w,W_{w'}\bigr)=-\frac{\langle w,w'\rangle}{r^2}\int_Me^{c}\,|df_H|^2\,dv_g
\qquad(w,w'\in P^\perp).
\end{equation}
Consequently $k(f)\ge n-k$, and the index of $f$ as a map into $S^n(r)$ satisfies $\ind(f)\ge n-k$.
\end{proposition}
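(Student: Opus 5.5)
First I would check that $W_w$ is a genuine section of $f^{-1}TS^n(r)$ and that it is parallel. Since $f(M)\subset S^k(r)\subset P$, the position vector $f(x)$ lies in $P$, so $\langle w,f(x)\rangle=0$ for $w\in P^\perp$. Hence $w\in T_{f(x)}S^n(r)$ for every $x$, and $W_w$ is a smooth section. For parallelism I would use that the induced connection on the sphere is the tangential projection of the flat derivative. This gives $\tilde\nabla_YW_w=\bigl(Y(w)\bigr)^\top=0$, because $w$ is constant. Therefore $W_w\in P(f)$. The map $w\mapsto W_w$ is injective on the $(n-k)$-dimensional space $P^\perp$, so $k(f)\ge n-k$.

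Next I would compute \eqref{eq:subsphere-index} from \eqref{eq:index-polarized}. Both the gradient term and the term $\mathcal F(W_w)\mathcal F(W_{w'})$ vanish, since $\tilde\nabla W_w=0$. Only the curvature term survives. The sphere has constant curvature $r^{-2}$, so
\[
\tilde R_N(X,W,W',X)=r^{-2}\bigl(|X|^2\langle W,W'\rangle-\langle X,W\rangle\langle X,W'\rangle\bigr).
\]
I would apply this with $X=df(e_i)$. Because $f$ maps into $S^k(r)\subset P$, the vector $df(e_i)$ is tangent to $S^k(r)$, hence lies in $P$ and is orthogonal to $w$ and $w'$. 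The mixed terms $\langle X,W\rangle\langle X,W'\rangle$ therefore drop out. Summing over $i$ gives $\mathcal R_T(W_w,W_{w'})=r^{-2}|df_H|^2\langle w,w'\rangle$, and \eqref{eq:subsphere-index} follows after integrating against $e^{c}$.

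Finally, for the index bound I would take an orthonormal basis of $P^\perp$. By \eqref{eq:subsphere-index}, the corresponding matrix $\mathbb J$ of Corollary \ref{cor:J} equals $-r^{-2}\bigl(\int_Me^{c}|df_H|^2\,dv_g\bigr)\,\mathrm{Id}$. It remains to see that this integral is positive. That is where bracket-generation enters: $f$ is nonconstant, so by Lemma \ref{lem:horiz-const} the set $\{df_H\ne0\}$ is a nonempty open set, and the integral is positive. Hence $\mathbb J$ is negative definite of size $n-k$. Applying Lemma \ref{lem:test-space}, or Corollary \ref{cor:J}, to the span of the $W_w$ gives $\ind(f)\ge n-k$.

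The computation is routine. The only step needing care is the identification of the pullback connection with the projected flat derivative together with the orthogonality $df(e_i)\perp P^\perp$; both rest on $f(M)\subset P$. An alternative route would be Corollary \ref{cor:positive-curv}, since every $W_w$ is pointwise orthogonal to $\image df_H$.
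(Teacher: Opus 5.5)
Your proof is correct and follows the paper's argument essentially step by step. You show that $W_w$ is tangent and parallel because $f(M)\subset P$, that only the curvature term survives (with the mixed terms killed by $df(e_i)\in P$), and that $\int_Me^{c}|df_H|^2\,dv_g>0$ via Lemma~\ref{lem:horiz-const}. Applying Lemma~\ref{lem:test-space} directly to the span of the $W_w$ is slightly cleaner than the paper's appeal to Corollary~\ref{cor:J} with an extended basis of $P(f)$.

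The one caveat concerns your closing aside. Corollary~\ref{cor:positive-curv} as stated needs its hypothesis for every nonzero element of $P(f)$, and $P(f)$ can be strictly larger than the span of the $W_w$. For the great-circle maps of Section~\ref{sec:heis}, $\mathbf t\in P(f)$ has $\image df_H\subset\mathbb R\,\mathbf t$ and $I_H(\mathbf t,\mathbf t)=0$, so the corollary itself does not apply. Only its proof, restricted to the span of the $W_w$, can be used.
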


\begin{proof}
For $x\in M$ we have $f(x)\in P$, so $w\perp f(x)$ for $w\in P^\perp$, that is, $w\in T_{f(x)}S^n(r)=f(x)^\perp$. Thus $W_w$ is a section of $f^{-1}TS^n(r)$. Let $D$ be the flat connection of $\mathbb R^{n+1}$. Since $w$ is constant, $D_YW_w=0$ for every $Y\in\Gamma(TM)$. The pullback connection is the tangential projection of $D$, so $\tilde\nabla_YW_w=(D_YW_w)^\top=0$ and $W_w\in P(f)$. As $\dim P^\perp=n-k$, we get $k(f)\ge n-k$.

Since $f(M)\subset P$, each $df(e_i)$ lies in $P$, so $\langle df(e_i),w\rangle=0$ for $w\in P^\perp$. The curvature of $S^n(r)$ is $\tilde R^N(X,Y)Z=r^{-2}(\langle Y,Z\rangle X-\langle X,Z\rangle Y)$, hence
\begin{align}
\tilde R_N\bigl(df(e_i),W_w,W_{w'},df(e_i)\bigr)
&=\frac{1}{r^2}\Bigl(\langle w,w'\rangle|df(e_i)|^2-\langle df(e_i),w'\rangle\langle w,df(e_i)\rangle\Bigr)\notag\\
&=\frac{\langle w,w'\rangle}{r^2}|df(e_i)|^2 .
\end{align}
Summing over $i$ and using $I_H(W_w,W_{w'})=-\int_Me^{c}\mathcal R_T(W_w,W_{w'})\,dv_g$, valid since both sections are parallel, gives \eqref{eq:subsphere-index}. Since $f$ is nonconstant, $\int_Me^{c}|df_H|^2dv_g>0$ by Lemma \ref{lem:horiz-const}, so the right-hand side of \eqref{eq:subsphere-index} is a negative multiple of the inner product on $P^\perp$. Corollary \ref{cor:J}, applied with a basis of $P(f)$ extending an orthonormal basis of $\{W_w:w\in P^\perp\}$, gives $\ind(f)\ge n-k$.
\end{proof}

\subsection{Products of Spheres}

For products of spheres and for convex hypersurfaces only the extrinsic test space is used.

\begin{theorem}\label{thm:products}
Let $n_1,n_2\ge3$, $n:=n_1+n_2$, $r_1,r_2>0$, and let
\[
N=S^{n_1}(r_1)\times S^{n_2}(r_2)\subset\mathbb R^{n_1+1}\times\mathbb R^{n_2+1}=\mathbb R^{n+2}
\]
carry the product embedding, with inward unit normals
\[
\nu_1=\Bigl(-\frac{x_1}{r_1},\,0\Bigr),\qquad \nu_2=\Bigl(0,\,-\frac{x_2}{r_2}\Bigr).
\]
Let $f:M\to N$ be smooth, write $df(e_i)=X_{1,i}+X_{2,i}$ according to the product splitting, and put $s_\ell:=\sum_{i=1}^m|X_{\ell,i}|^2$, so that $|df_H|^2=s_1+s_2$. Then
\begin{equation}\label{eq:product-exact}
\widetilde{\mathscr Q}_N(f)=\frac{s_1\bigl(s_1-(n_1-2)\bigr)}{r_1^2}+\frac{s_2\bigl(s_2-(n_2-2)\bigr)}{r_2^2}.
\end{equation}
If $H$ is bracket-generating and $f$ is exponentially subelliptic harmonic with $s_1<n_1-2$ and $s_2<n_2-2$ on $M$, then $f$ is unstable if it is nonconstant, and constant if it is stable.
\end{theorem}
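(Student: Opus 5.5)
The computation mirrors Theorem \ref{thm:sphere}, now with two normal directions. First I will write down the second fundamental form of the product embedding. For tangent vectors $X=X_1+X_2$ and $Y=Y_1+Y_2$ split along the factors, it is
\[
B(X,Y)=\tfrac1{r_1}\langle X_1,Y_1\rangle\nu_1+\tfrac1{r_2}\langle X_2,Y_2\rangle\nu_2 ,
\]
because each factor sphere sits in its own coordinate block. Taking an orthonormal tangent basis adapted to the splitting then gives
\[
\eta=\frac{n_1}{r_1}\nu_1+\frac{n_2}{r_2}\nu_2,\qquad
\sigma(X)=\sum_A|B(X,\varepsilon_A)|^2=\frac{|X_1|^2}{r_1^2}+\frac{|X_2|^2}{r_2^2}.
\]
For $\sigma$, only the basis vectors $\varepsilon_A$ in the same factor as the relevant component of $X$ contribute, and they contribute $|X_\ell|^2/r_\ell^2$ by Parseval's identity in that factor.

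Next I will put these into Definition \ref{def:howard}. The sum $S_f=\sum_iB(df(e_i),df(e_i))$ equals $\frac{s_1}{r_1}\nu_1+\frac{s_2}{r_2}\nu_2$. Since $\nu_1\perp\nu_2$ are unit vectors,
\[
|S_f|^2=\frac{s_1^2}{r_1^2}+\frac{s_2^2}{r_2^2},\qquad
\langle S_f,\eta\rangle=\frac{n_1s_1}{r_1^2}+\frac{n_2s_2}{r_2^2},\qquad
2\sum_i\sigma(df(e_i))=\frac{2s_1}{r_1^2}+\frac{2s_2}{r_2^2}.
\]
Adding these with the signs of \eqref{eq:Qdef}, the terms group by factor, and this gives \eqref{eq:product-exact}.

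For instability and rigidity, assume $s_\ell<n_\ell-2$. Then each summand in \eqref{eq:product-exact} satisfies $s_\ell(s_\ell-(n_\ell-2))\le0$, with equality only when $s_\ell=0$, since the second factor is strictly negative. So $\widetilde{\mathscr Q}_N(f)\le0$, and it vanishes at $x$ only if $s_1(x)=s_2(x)=0$, that is, only if $df_H(x)=0$. This is exactly hypothesis \eqref{eq:rigidity-hyp}, so Theorem \ref{thm:rigidity} shows that a stable $f$ is constant. To verify the hypotheses of Theorem \ref{thm:index} required there, take $\kappa:=\max(r_1^{-2},r_2^{-2})$: sectional curvatures of the product lie in $[0,\kappa]$. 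If $f$ is nonconstant, Lemma \ref{lem:horiz-const} gives a nonempty open set where $df_H\ne0$. On that set $\widetilde{\mathscr Q}_N(f)<0$, so $\widetilde{\mathcal T}(f)<0$ and Theorem \ref{thm:master} yields instability.

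No step here is a real obstacle. The only care needed is in computing $\sigma$, to make sure the mixed-factor terms $B(X_1,\varepsilon_A)$ with $\varepsilon_A$ tangent to the second factor vanish, and to see that the components of $S_f$ along the two orthogonal normals do not interact.
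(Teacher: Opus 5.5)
Your proposal is correct and follows the paper's proof essentially step by step: the same computation of $S_f$, $\eta$ and $\sigma$ along the two orthonormal normals, then Lemma \ref{lem:horiz-const} with Theorem \ref{thm:master} for instability and Theorem \ref{thm:rigidity} for the stable case. The curvature bound $K^N\le\max\{r_1^{-2},r_2^{-2}\}$ that you add is true but unnecessary, because the proof of Theorem \ref{thm:rigidity} never uses it.
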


\begin{proof}
For $X=X_1+X_2$ one has $B(X,X)=\frac1{r_1}|X_1|^2\nu_1+\frac1{r_2}|X_2|^2\nu_2$, hence
\[
S_f=\frac{s_1}{r_1}\nu_1+\frac{s_2}{r_2}\nu_2,
\qquad
\eta=\frac{n_1}{r_1}\nu_1+\frac{n_2}{r_2}\nu_2 .
\]
The normals $\nu_1,\nu_2$ are orthonormal, and $\mathcal A^{\nu_\ell}$ acts as $r_\ell^{-1}\operatorname{Id}$ on the $\ell$-th factor and as $0$ on the other. Therefore
\[
|S_f|^2=\frac{s_1^2}{r_1^2}+\frac{s_2^2}{r_2^2},
\qquad
\langle S_f,\eta\rangle=\frac{n_1s_1}{r_1^2}+\frac{n_2s_2}{r_2^2},
\qquad
\sum_{i=1}^m\sigma\bigl(df(e_i)\bigr)=\frac{s_1}{r_1^2}+\frac{s_2}{r_2^2},
\]
and substitution into \eqref{eq:Qdef} gives \eqref{eq:product-exact}.

Under the stated hypotheses, each summand of \eqref{eq:product-exact} is nonpositive and vanishes if and only if the corresponding $s_\ell$ vanishes. If $f$ is nonconstant, then $s_1+s_2>0$ on a nonempty open set by Lemma \ref{lem:horiz-const}, so $\widetilde{\mathcal T}(f)<0$ and $f$ is unstable by Theorem \ref{thm:master}. For rigidity, the first condition of \eqref{eq:rigidity-hyp} holds, and $\widetilde{\mathscr Q}_N(f)(x)=0$ forces $s_1(x)=s_2(x)=0$, that is, $df_H(x)=0$. Theorem \ref{thm:rigidity} applies.
\end{proof}

\begin{corollary}\label{cor:index-product}
In the notation of Theorem \ref{thm:products}, let $H$ be bracket-generating and let $f$ be a nonconstant exponentially subelliptic harmonic map. Put
\[
\Sigma_\ell:=\sup_Ms_\ell,\qquad
\delta_\ell:=(n_\ell-2)-\Sigma_\ell\quad(\ell=1,2),\qquad
\kappa:=\max\{r_1^{-2},r_2^{-2}\}.
\]
If $\delta_1>0$ and $\delta_2>0$, then
\[
\ind(f)\ \ge\ \Bigl\lceil\frac{\min\{\delta_1r_1^{-2},\ \delta_2r_2^{-2}\}}{\kappa}\Bigr\rceil .
\]
\end{corollary}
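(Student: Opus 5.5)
The plan is to apply Theorem \ref{thm:index} directly with $\kappa=\max\{r_1^{-2},r_2^{-2}\}$. For the product metric, $K^N\le\kappa$ holds everywhere: a mixed plane spanned by vectors from both factors has curvature $\cos^2\theta\,|X_1|^2\ldots$, a convex combination bounded by the larger factor curvature, and a plane inside one factor has curvature $r_\ell^{-2}$. More precisely, for orthonormal $X,Y$ one has $K(X,Y)=\sum_\ell r_\ell^{-2}\bigl(|X_\ell|^2|Y_\ell|^2-\langle X_\ell,Y_\ell\rangle^2\bigr)\le\kappa\sum_\ell|X_\ell|^2|Y_\ell|^2\le\kappa$, since $|X_1|^2+|X_2|^2=1$ and similarly for $Y$. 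So the hypotheses of Theorem \ref{thm:index} are met, and $C_0=\kappa\int_Me^{c}(s_1+s_2)\,dv_g$.

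Next I bound $\widetilde{\mathcal T}(f)$ from above using \eqref{eq:product-exact}. Pointwise, $s_\ell\le\Sigma_\ell$ gives $s_\ell-(n_\ell-2)\le-\delta_\ell$, hence
\[
\widetilde{\mathscr Q}_N(f)\le-\frac{\delta_1}{r_1^2}s_1-\frac{\delta_2}{r_2^2}s_2\le-\mu\,(s_1+s_2),\qquad \mu:=\min\{\delta_1r_1^{-2},\delta_2r_2^{-2}\}>0 .
\]
Integrating against $e^{c}dv_g$ gives $\widetilde{\mathcal T}(f)\le-\mu\int_Me^{c}|df_H|^2dv_g$. Since $f$ is nonconstant, Lemma \ref{lem:horiz-const} makes the last integral positive, so $\widetilde{\mathcal T}(f)<0$ and $C_0>0$.

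Finally I take the ratio: $|\widetilde{\mathcal T}(f)|/C_0\ge\mu/\kappa$, and monotonicity of the ceiling together with Theorem \ref{thm:index} yields $\ind(f)\ge\lceil\mu/\kappa\rceil$. The only step needing care is the curvature bound $K^N\le\kappa$ for mixed planes in the product, handled by the decomposition above. The rest is the substitution into \eqref{eq:product-exact}, which is direct.
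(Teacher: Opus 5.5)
Your proposal is correct and follows the paper's proof. Both establish $K^N\le\kappa$ on the product, use \eqref{eq:product-exact} with $s_\ell\le\Sigma_\ell$ to get $\widetilde{\mathscr Q}_N(f)\le-\min\{\delta_1r_1^{-2},\delta_2r_2^{-2}\}\,|df_H|^2$, and feed the resulting ratio $|\widetilde{\mathcal T}(f)|/C_0\ge\min\{\delta_1r_1^{-2},\delta_2r_2^{-2}\}/\kappa$ into Theorem \ref{thm:index}. The only difference is in the curvature step: you drop the $\langle X_\ell,Y_\ell\rangle^2$ terms and use $|X_1|^2|Y_1|^2+|X_2|^2|Y_2|^2\le\max\{|X_1|^2,|X_2|^2\}\le1$ (a convex combination with weights $|Y_\ell|^2$), whereas the paper proves $|U_1\wedge V_1|^2+|U_2\wedge V_2|^2\le|U\wedge V|^2=1$ by an explicit expansion; both arguments are valid.
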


\begin{proof}
We first bound the sectional curvature. Let $U=(U_1,U_2)$ and $V=(V_1,V_2)$ be orthonormal in $T_{(q_1,q_2)}N$. Since the curvature tensor of a product metric splits,
\[
K^N(U\wedge V)=\frac{|U_1\wedge V_1|^2}{r_1^2}+\frac{|U_2\wedge V_2|^2}{r_2^2},
\qquad |X\wedge Y|^2:=|X|^2|Y|^2-\langle X,Y\rangle^2 .
\]
Expanding $|U\wedge V|^2$,
\begin{align}
|U\wedge V|^2-|U_1\wedge V_1|^2-|U_2\wedge V_2|^2
&=|U_1|^2|V_2|^2+|U_2|^2|V_1|^2-2\langle U_1,V_1\rangle\langle U_2,V_2\rangle\notag\\
&\ge|U_1|^2|V_2|^2+|U_2|^2|V_1|^2-2|U_1||V_1||U_2||V_2|\notag\\
&=\bigl(|U_1||V_2|-|U_2||V_1|\bigr)^2\ \ge\ 0 .
\end{align}
Since $|U\wedge V|^2=1$, we obtain $|U_1\wedge V_1|^2+|U_2\wedge V_2|^2\le1$, whence $K^N\le\kappa$.

By \eqref{eq:product-exact} and $s_\ell\le\Sigma_\ell$,
\begin{align}
\widetilde{\mathscr Q}_N(f)
&\le-\frac{\delta_1s_1}{r_1^2}-\frac{\delta_2s_2}{r_2^2}\notag\\
&\le-\min\{\delta_1r_1^{-2},\delta_2r_2^{-2}\}\,(s_1+s_2)
=-\min\{\delta_1r_1^{-2},\delta_2r_2^{-2}\}\,|df_H|^2 .
\end{align}
Since $f$ is nonconstant, $\widetilde{\mathcal T}(f)<0$ and
\[
|\widetilde{\mathcal T}(f)|\ge\min\{\delta_1r_1^{-2},\delta_2r_2^{-2}\}\int_Me^{c}|df_H|^2\,dv_g,
\qquad
C_0=\kappa\int_Me^{c}|df_H|^2\,dv_g .
\]
Theorem \ref{thm:index} concludes.
\end{proof}

\begin{remark}
The bound $K^N\le\kappa$ is attained on planes tangent to the factor of smaller radius, but it is crude for most planes, since a product contains mixed two-planes of vanishing curvature. It is nevertheless the only information used in Lemma \ref{lem:lower-bound}, which is applied to arbitrary test fields. For simplicity, Corollary \ref{cor:index-product} is stated in terms of the pointwise suprema $\Sigma_\ell$. An integral refinement can also be obtained from the exact trace identity. With $s=s_1+s_2$, $D=\int_Me^{s/2}s\,dv_g>0$, $A_\ell=\int_Me^{s/2}s_\ell\,dv_g$, $p_\ell=A_\ell/D$ and, for $A_\ell>0$, $\bar s_{\ell,w}=A_\ell^{-1}\int_Me^{s/2}s_\ell^2\,dv_g$, formula \eqref{eq:product-exact} gives
\[
\frac{\widetilde{\mathcal T}(f)}{C_0}=\sum_{\ell:\,A_\ell>0}\frac{p_\ell}{\kappa r_\ell^2}\bigl(\bar s_{\ell,w}-(n_\ell-2)\bigr),
\]
and hence $\ind(f)\ge\max\bigl\{0,\bigl\lceil\sum_{\ell:A_\ell>0}\frac{p_\ell}{\kappa r_\ell^2}\bigl((n_\ell-2)-\bar s_{\ell,w}\bigr)\bigr\rceil\bigr\}$ by Theorem \ref{thm:index}.
\end{remark}

\subsection{Convex Hypersurfaces}

Let $N^n$ be a two-sided immersed hypersurface in $\mathbb R^{n+1}$ with global unit normal $\nu$, let $\mathcal A:=\mathcal A^\nu$ be the shape operator, denote the principal curvatures by $\kappa_1\le\cdots\le\kappa_n$, and put $h:=\tr\mathcal A$. Then
\[
B(X,Y)=\bigl\langle\mathcal AX,Y\bigr\rangle\nu,\qquad\eta=h\nu,\qquad\sigma(X)=\bigl|\mathcal AX\bigr|^2 .
\]

\begin{lemma}\label{lem:spectral-prep}
Assume $\mathcal A\ge0$ at the point $f(x)$. Expand $df(e_i)=\sum_{A=1}^nc_{iA}\varepsilon_A$ in an orthonormal eigenbasis $\{\varepsilon_A\}$ of $\mathcal A$ at $f(x)$, with $\mathcal A\varepsilon_A=\kappa_A\varepsilon_A$, and put
\[
\mu_A:=\sum_{i=1}^mc_{iA}^2\ \ge0,
\qquad
\theta:=\sum_{i=1}^m\bigl\langle\mathcal A(df(e_i)),df(e_i)\bigr\rangle=\sum_{A=1}^n\kappa_A\mu_A .
\]
Then $\sum_A\mu_A=|df_H|^2$ and
\begin{align}
0\ \le\ \theta\ &\le\ \kappa_n|df_H|^2,\label{eq:theta-bound}\\
\kappa_n\theta-\sum_{A=1}^n\kappa_A^2\mu_A&=\sum_{A=1}^n\kappa_A\bigl(\kappa_n-\kappa_A\bigr)\mu_A\ \ge\ 0,\label{eq:defect}\\
\widetilde{\mathscr Q}_N(f)=\theta^2-\theta h+2\sum_{A=1}^n\kappa_A^2\mu_A&\ \le\ \theta\bigl(\theta-h+2\kappa_n\bigr).\label{eq:l4.2}
\end{align}
\end{lemma}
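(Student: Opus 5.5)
The plan is a direct computation in the eigenbasis of $\mathcal A$ at $f(x)$, with everything pointwise, so we fix $x$ and drop it from the notation. First, orthonormality of $\{\varepsilon_A\}$ gives $|df(e_i)|^2=\sum_Ac_{iA}^2$. Summing over $i$ yields $\sum_A\mu_A=|df_H|^2$, and each $\mu_A\ge0$ as a sum of squares. Next, $\langle\mathcal A(df(e_i)),df(e_i)\rangle=\sum_A\kappa_Ac_{iA}^2$, and summing over $i$ gives $\theta=\sum_A\kappa_A\mu_A$. Since $\mathcal A\ge0$ means $0\le\kappa_A\le\kappa_n$ for all $A$, this immediately gives \eqref{eq:theta-bound}, namely $0\le\theta\le\kappa_n\sum_A\mu_A=\kappa_n|df_H|^2$.

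For \eqref{eq:defect}, we expand $\kappa_n\theta=\sum_A\kappa_n\kappa_A\mu_A$ and subtract $\sum_A\kappa_A^2\mu_A$ term by term. This gives $\sum_A\kappa_A(\kappa_n-\kappa_A)\mu_A$. Each factor $\kappa_A$, $\kappa_n-\kappa_A$ and $\mu_A$ is nonnegative, so the sum is nonnegative.

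For \eqref{eq:l4.2}, we substitute the hypersurface data into Definition \ref{def:howard}:
\begin{itemize}
\item From $B(X,Y)=\langle\mathcal AX,Y\rangle\nu$ we get $S_f=\sum_iB(df(e_i),df(e_i))=\theta\nu$, so $|S_f|^2=\theta^2$.
\item Since $\eta=h\nu$ and $|\nu|=1$, we get $\langle S_f,\eta\rangle=\theta h$.
\item From $\sigma(X)=|\mathcal AX|^2$ and $\mathcal A(df(e_i))=\sum_A\kappa_Ac_{iA}\varepsilon_A$ we get $\sigma(df(e_i))=\sum_A\kappa_A^2c_{iA}^2$. Summing over $i$ gives $\sum_i\sigma(df(e_i))=\sum_A\kappa_A^2\mu_A$.
\end{itemize}
This proves the identity $\widetilde{\mathscr Q}_N(f)=\theta^2-\theta h+2\sum_A\kappa_A^2\mu_A$. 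For the inequality, \eqref{eq:defect} gives $\sum_A\kappa_A^2\mu_A\le\kappa_n\theta$, and therefore
\[
\widetilde{\mathscr Q}_N(f)\le\theta^2-\theta h+2\kappa_n\theta=\theta(\theta-h+2\kappa_n).
\]

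There is no real obstacle here. The only point needing care is that convexity is used only at the single point $f(x)$, through $\kappa_A\ge0$. This is what makes $\theta\ge0$ and \eqref{eq:defect} hold, while the identity for $\widetilde{\mathscr Q}_N$ itself requires no sign assumption.
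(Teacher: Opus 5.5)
Your proposal is correct and matches the paper's proof essentially step for step. Both arguments compute $S_f=\theta\nu$, $\langle S_f,\eta\rangle=\theta h$ and $\sum_i\sigma(df(e_i))=\sum_A\kappa_A^2\mu_A$ in the eigenbasis, read off \eqref{eq:theta-bound} and \eqref{eq:defect} from $0\le\kappa_A\le\kappa_n$, and then insert $\sum_A\kappa_A^2\mu_A\le\kappa_n\theta$ into the identity for $\widetilde{\mathscr Q}_N(f)$.
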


\begin{proof}
Since $B(X,X)=\langle\mathcal AX,X\rangle\nu$, we have $S_f=\theta\nu$, so $|S_f|^2=\theta^2$ and $\langle S_f,\eta\rangle=\theta h$. Moreover
\[
\sum_{i=1}^m\sigma\bigl(df(e_i)\bigr)=\sum_{i=1}^m\bigl|\mathcal A(df(e_i))\bigr|^2=\sum_{A=1}^n\kappa_A^2\mu_A ,
\]
which gives the equality in \eqref{eq:l4.2}. As $0\le\kappa_A\le\kappa_n$, summing $\kappa_A\mu_A\le\kappa_n\mu_A$ gives \eqref{eq:theta-bound}, and \eqref{eq:defect} is an identity whose right-hand side is a sum of nonnegative terms. Substituting $\sum_A\kappa_A^2\mu_A\le\kappa_n\theta$ into the equality of \eqref{eq:l4.2} gives the inequality.
\end{proof}

\begin{theorem}\label{thm:spectral}
Let $\mathcal A\ge0$ and suppose that at a point of $M$ one has $\kappa_n>0$ and
\begin{equation}\label{eq:threshold}
|df_H|^2<\frac{h}{\kappa_n}-2 .
\end{equation}
Then $\widetilde{\mathscr Q}_N(f)\le0$ at that point, with strict inequality if $\theta>0$. If $\mathcal A>0$, then $\theta>0$ whenever $df_H\ne0$.
\end{theorem}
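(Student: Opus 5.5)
The plan is to work pointwise at the given point $x$ and use Lemma \ref{lem:spectral-prep}, which gives $\widetilde{\mathscr Q}_N(f)\le\theta(\theta-h+2\kappa_n)$ with $\theta\ge0$. It then suffices to show that the bracket $\theta-h+2\kappa_n$ is strictly negative. The two assertions follow: nonpositivity always, and strict negativity when $\theta>0$.

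For the bracket, I will use \eqref{eq:theta-bound}, $\theta\le\kappa_n|df_H|^2$. Since $\kappa_n>0$, the hypothesis \eqref{eq:threshold} multiplied by $\kappa_n$ gives $\kappa_n|df_H|^2<h-2\kappa_n$. Chaining these,
\[
\theta-h+2\kappa_n\le\kappa_n|df_H|^2-h+2\kappa_n<0 .
\]
Hence $\widetilde{\mathscr Q}_N(f)\le\theta\cdot(\text{negative})\le0$, with strict inequality precisely when $\theta>0$. The case $\theta=0$ gives equality in the product bound, consistent with the claim.

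For the last assertion, assume $\mathcal A>0$, so that every $\kappa_A>0$. Then $\theta=\sum_A\kappa_A\mu_A\ge\kappa_1\sum_A\mu_A=\kappa_1|df_H|^2$, which is positive when $df_H\ne0$. Alternatively, $\theta=\sum_i\langle\mathcal A\,df(e_i),df(e_i)\rangle$ is positive as soon as some $df(e_i)\ne0$.

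There is no real obstacle here. The only care needed is the sign bookkeeping: $\kappa_n>0$ is required to multiply the threshold inequality, and the case $\theta=0$ must be kept separate for the strict conclusion. Note also that \eqref{eq:threshold} forces $h>2\kappa_n$, although this fact is not needed.
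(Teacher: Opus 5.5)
Your proof is correct and follows the paper's argument: you bound the bracket $\theta-h+2\kappa_n$ using \eqref{eq:theta-bound} and the threshold, conclude with \eqref{eq:l4.2} and $\theta\ge0$, and for $\mathcal A>0$ use $\theta\ge\kappa_1|df_H|^2$. Your claim that the inequality is strict \emph{precisely} when $\theta>0$ is also true, since $\theta=0$ forces every $\kappa_A\mu_A=0$ and hence $\widetilde{\mathscr Q}_N(f)=0$, but the statement only needs the \emph{if} direction.
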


\begin{proof}
By \eqref{eq:theta-bound} and \eqref{eq:threshold},
\[
\theta-h+2\kappa_n\ \le\ \kappa_n|df_H|^2-h+2\kappa_n
=\kappa_n\Bigl(|df_H|^2-\frac{h}{\kappa_n}+2\Bigr)\ <\ 0 .
\]
As $\theta\ge0$, \eqref{eq:l4.2} gives $\widetilde{\mathscr Q}_N(f)\le\theta(\theta-h+2\kappa_n)\le0$, with strict inequality when $\theta>0$. This covers both the case $df_H=0$, where $\theta=0$, and the case $df_H\ne0$. Finally, $\mathcal A>0$ gives $\ker\mathcal A=0$, so $df_H\ne0$ forces $\theta\ge\kappa_1|df_H|^2>0$.
\end{proof}

\begin{remark}\label{rem:anisotropic}
By \eqref{eq:defect}, equality holds in $\sum_A\kappa_A^2\mu_A\le\kappa_n\theta$ if and only if $\mu_A=0$ for every $A$ with $0<\kappa_A<\kappa_n$, that is, if and only if
\[
\image\bigl(df_H\bigr)\subseteq\ker\mathcal A+E_{\kappa_n},
\]
where $E_{\kappa_n}$ is the eigenspace of $\mathcal A$ for $\kappa_n$. When $\mathcal A>0$ this means that $df_H$ takes values in $E_{\kappa_n}$. When $\kappa_n>0$ one has $h/\kappa_n\in[1,n]$, so the threshold in \eqref{eq:threshold} lies in $[-1,n-2]$ and has to be evaluated pointwise.
\end{remark}

\begin{theorem}\label{thm:convex-full}
Let $\mathcal A>0$, let $H$ be bracket-generating, and let $f:M\to N$ be exponentially subelliptic harmonic, satisfying \eqref{eq:threshold} at every point of $M$. Then:
\begin{enumerate}
\item[\rm(i)] if $f$ is nonconstant, then $f$ is unstable;
\item[\rm(ii)] if $f$ is stable, then $f$ is constant;
\item[\rm(iii)] suppose $f$ is nonconstant, and put
\[
\Theta(q):=\frac{h(q)}{\kappa_n(q)}-2,\quad
\Theta_{\min}:=\min_{f(M)}\Theta,\quad
\kappa_{\min}:=\min_{f(M)}\kappa_1,\quad
\kappa_{\max}:=\max_{f(M)}\kappa_n,\quad
\Sigma:=\sup_M|df_H|^2 .
\]
If $\Sigma<\Theta_{\min}$, then $\alpha_0:=\min_{q\in f(M)}\kappa_1(q)\kappa_n(q)\bigl(\Theta(q)-\Sigma\bigr)$ satisfies $\alpha_0\ge\kappa_{\min}^2(\Theta_{\min}-\Sigma)>0$, and
\[
\ind(f)\ \ge\ \Bigl\lceil\frac{\alpha_0}{\kappa_{\max}^2}\Bigr\rceil
\ \ge\ \Bigl\lceil\frac{\kappa_{\min}^2}{\kappa_{\max}^2}\bigl(\Theta_{\min}-\Sigma\bigr)\Bigr\rceil .
\]
\end{enumerate}
\end{theorem}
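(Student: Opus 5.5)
Parts (i) and (ii) follow the pattern of Theorem \ref{thm:sphere}. By Theorem \ref{thm:spectral}, applied at each point, the threshold \eqref{eq:threshold} everywhere gives $\widetilde{\mathscr Q}_N(f)\le0$ on $M$. Since $\mathcal A>0$, we have $\theta>0$ wherever $df_H\ne0$, and so $\widetilde{\mathscr Q}_N(f)<0$ there. For (i), Lemma \ref{lem:horiz-const} shows that $\{df_H\ne0\}$ is a nonempty open set when $f$ is nonconstant, so $\widetilde{\mathcal T}(f)<0$ and Theorem \ref{thm:master} gives instability. For (ii), we check \eqref{eq:rigidity-hyp}: the first condition has just been shown, and the second holds because $\widetilde{\mathscr Q}_N(f)(x)=0$ forces $\theta(x)=0$, hence $df_H(x)=0$. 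Theorem \ref{thm:rigidity} then applies.

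For (iii), I would apply Theorem \ref{thm:index} with $\kappa=\kappa_{\max}^2$. To justify this curvature bound, the Gauss equation \eqref{eq:gauss} for a hypersurface gives, for orthonormal $X,Y$, $K^N(X\wedge Y)=\langle\mathcal AX,X\rangle\langle\mathcal AY,Y\rangle-\langle\mathcal AX,Y\rangle^2\le\kappa_n^2$. Indeed, $\mathcal A\ge0$, and the $2\times2$ compression of $\mathcal A$ to the plane has determinant at most the square of its largest eigenvalue, which in turn is at most $\kappa_n$. Thus $C_0=\kappa_{\max}^2\int_Me^{c}|df_H|^2dv_g>0$.

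The main step is the pointwise bound $\widetilde{\mathscr Q}_N(f)\le-\alpha_0|df_H|^2$. By \eqref{eq:l4.2} we have $\widetilde{\mathscr Q}_N(f)\le\theta(\theta-h+2\kappa_n)$. The proof of Theorem \ref{thm:spectral}, with $|df_H|^2\le\Sigma$, gives $\theta-h+2\kappa_n\le\kappa_n(\Sigma-\Theta)<0$. In addition $\theta\ge\kappa_1|df_H|^2$ because $\mathcal A>0$. Multiplying these two inequalities, which is allowed since the second factor is negative, gives
\[
\widetilde{\mathscr Q}_N(f)\le-\kappa_1\kappa_n(\Theta-\Sigma)|df_H|^2\le-\alpha_0|df_H|^2,
\]
with all curvature quantities evaluated at $f(x)$. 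Integrating against $e^{c}$ yields $|\widetilde{\mathcal T}(f)|\ge\alpha_0\int_Me^{c}|df_H|^2dv_g$, hence $|\widetilde{\mathcal T}(f)|/C_0\ge\alpha_0/\kappa_{\max}^2$, and Theorem \ref{thm:index} gives the first index bound. The second inequality follows from $\kappa_1(q)\kappa_n(q)\ge\kappa_1(q)^2\ge\kappa_{\min}^2$ together with $\Theta(q)-\Sigma\ge\Theta_{\min}-\Sigma>0$. Here the minima are attained because $f(M)$ is compact and the principal curvatures depend continuously on the point. The only delicate point is the sign bookkeeping when multiplying the inequality for $\theta$ by a negative factor, which is why the lower bound $\theta\ge\kappa_1|df_H|^2$ is used rather than the upper bound.
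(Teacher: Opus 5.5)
Your proposal is correct and follows the paper's proof essentially step by step. Parts (i) and (ii) go through Theorem~\ref{thm:spectral}, Lemma~\ref{lem:horiz-const} and Theorems~\ref{thm:master} and~\ref{thm:rigidity}, and part (iii) uses the Gauss-equation bound $K^N\le\kappa_{\max}^2$ and the chain $\widetilde{\mathscr Q}_N(f)\le\theta(\theta-\kappa_n\Theta)\le-\kappa_1\kappa_n(\Theta-\Sigma)|df_H|^2\le-\alpha_0|df_H|^2$ fed into Theorem~\ref{thm:index}, exactly as the paper does; the only cosmetic difference is that you bound the sectional curvature through the determinant of the $2\times2$ compression of $\mathcal A$, whereas the paper drops the term $\langle\mathcal AU,V\rangle^2$ and bounds each diagonal entry by $\kappa_n$.
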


\begin{proof}
By Theorem \ref{thm:spectral}, $\widetilde{\mathscr Q}_N(f)\le0$ on $M$, with strict inequality wherever $df_H\ne0$. If $f$ is nonconstant, then $\widetilde{\mathcal T}(f)<0$ by Lemma \ref{lem:horiz-const}, and Theorem \ref{thm:master} gives (i). For (ii), the first condition of \eqref{eq:rigidity-hyp} holds; moreover $\widetilde{\mathscr Q}_N(f)(x)=0$ forces $\theta(x)=0$, hence $df_H(x)=0$ because $\ker\mathcal A=0$. Theorem \ref{thm:rigidity} applies.

For (iii), $f(M)$ is compact and the principal curvatures are continuous, so the minima and the maximum are attained. Since $\mathcal A>0$, we have $\kappa_1,\kappa_n>0$ on $f(M)$, and $\Theta-\Sigma\ge\Theta_{\min}-\Sigma>0$ there by the assumption $\Sigma<\Theta_{\min}$. Hence
\[
\alpha_0\ \ge\ \kappa_{\min}\cdot\kappa_{\min}\cdot\bigl(\Theta_{\min}-\Sigma\bigr)\ >\ 0 ,
\]
using $\kappa_n\ge\kappa_1\ge\kappa_{\min}$. This also gives the second inequality of the statement. By the Gauss equation \eqref{eq:gauss}, for orthonormal $U,V\in T_qN$,
\begin{align}
K^N(U\wedge V)&=\langle\mathcal AU,U\rangle\langle\mathcal AV,V\rangle-\langle\mathcal AU,V\rangle^2\notag\\
&\le\langle\mathcal AU,U\rangle\langle\mathcal AV,V\rangle\ \le\ \kappa_n(q)^2\ \le\ \kappa_{\max}^2 ,
\end{align}
so Lemma \ref{lem:lower-bound} applies with $\kappa=\kappa_{\max}^2$. Write $-h+2\kappa_n=-\kappa_n\Theta$. Using \eqref{eq:l4.2} together with $\theta\le\kappa_n|df_H|^2\le\kappa_n\Sigma$ and $\theta\ge\kappa_1|df_H|^2$,
\begin{align}
\widetilde{\mathscr Q}_N(f)
&\le\theta\bigl(\theta-\kappa_n\Theta\bigr)\notag\\
&\le\theta\,\kappa_n\bigl(\Sigma-\Theta\bigr)\notag\\
&=-\theta\,\kappa_n\bigl(\Theta-\Sigma\bigr)\notag\\
&\le-\kappa_1\kappa_n\bigl(\Theta-\Sigma\bigr)|df_H|^2\notag\\
&\le-\alpha_0|df_H|^2 .
\end{align}
Since $f$ is nonconstant,
\[
\widetilde{\mathcal T}(f)\le-\alpha_0\int_Me^{c}|df_H|^2\,dv_g<0,
\qquad
C_0=\kappa_{\max}^2\int_Me^{c}|df_H|^2\,dv_g ,
\]
and Theorem \ref{thm:index} gives $\ind(f)\ge\lceil\alpha_0/\kappa_{\max}^2\rceil$.
\end{proof}

\section{Exact Computation on a Heisenberg Nilmanifold}\label{sec:heis}

In this section the parallel test space is used in the case $k(f)=n$, in which the pullback bundle admits a global parallel orthonormal frame and Theorem \ref{thm:reduction} computes the index exactly.

\subsection{The Nilmanifold and its Sub-Laplacian}

Let $\mathbb H^3=(\mathbb R^3,\ast)$ be the Heisenberg group with group law
\[
(x,y,z)\ast(x',y',z')=\Bigl(x+x',\ y+y',\ z+z'+\tfrac12\bigl(xy'-yx'\bigr)\Bigr),
\]
and left-invariant frame
\[
X_1=\partial_x-\tfrac y2\,\partial_z,\qquad X_2=\partial_y+\tfrac x2\,\partial_z,\qquad X_3=\partial_z ,
\]
which satisfies $[X_1,X_2]=X_3$ and $[X_1,X_3]=[X_2,X_3]=0$. Let $g$ be the left-invariant metric making $\{X_1,X_2,X_3\}$ orthonormal, a distinguished Riemannian extension in the sense of Remark \ref{rem:extension}, and put $H:=\operatorname{span}\{X_1,X_2\}$. It is bracket-generating of rank $m=2$ and step $\mathfrak s=2$, the bracket $[X_1,X_2]$ having length $2$; moreover $\dim H=2$ and $\dim(H+[H,H])=3$ at every point, so $H$ is equiregular. Here $\mathcal V=\operatorname{span}\{X_3\}$.

\begin{lemma}\label{lem:lattice}
The subset $\Gamma:=\bigl\{(p,q,s)\in\mathbb H^3:\ p,q\in\mathbb Z,\ s\in\tfrac12\mathbb Z\bigr\}$ is a cocompact lattice in $\mathbb H^3$.
\end{lemma}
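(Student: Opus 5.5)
We must show three things: $\Gamma$ is a subgroup of $\mathbb H^3$, it is discrete, and the quotient $\Gamma\backslash\mathbb H^3$ is compact. We check them in that order.

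\emph{Subgroup.} For $(p,q,s),(p',q',s')\in\Gamma$ the group law gives
\[
(p,q,s)\ast(p',q',s')=\Bigl(p+p',\ q+q',\ s+s'+\tfrac12(pq'-qp')\Bigr).
\]
Since $pq'-qp'\in\mathbb Z$, the third coordinate lies in $\tfrac12\mathbb Z$, so $\Gamma$ is closed under multiplication. The identity is $(0,0,0)$. The inverse is $(x,y,z)^{-1}=(-x,-y,-z)$, because $\tfrac12(x(-y)-y(-x))=0$; this also shows $\Gamma$ is closed under inverses.

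\emph{Discreteness.} $\Gamma$ is the lattice $\mathbb Z\times\mathbb Z\times\tfrac12\mathbb Z$ in the manifold topology of $\mathbb R^3$, hence discrete and closed. It therefore acts properly discontinuously and freely by left translations, so $\Gamma\backslash\mathbb H^3$ is a smooth manifold and the left-invariant structures $X_i$, $g$, $H$ descend to it.

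\emph{Cocompactness.} We show that the compact set $K:=[0,1]\times[0,1]\times[0,\tfrac12]$ meets every coset $\Gamma\ast(x,y,z)$; then the quotient is the continuous image of $K$ and is compact. Given $(x,y,z)$, first left-multiply by $(-\lfloor x\rfloor,-\lfloor y\rfloor,0)\in\Gamma$. This moves the first two coordinates into $[0,1)$ and changes the third coordinate to some real number $z'$. Next left-multiply by $(0,0,t)$ with $t\in\tfrac12\mathbb Z$. The element $(0,0,t)$ is central: the correction term $\tfrac12(0\cdot y'-0\cdot x')$ vanishes. So this step adds $t$ to the third coordinate and leaves the first two unchanged. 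Choosing $t=-\tfrac12\lfloor 2z'\rfloor$ puts the third coordinate in $[0,\tfrac12)$. The resulting point lies in $K$.

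No real obstacle is expected. The only point needing care is the central-translation step, where the half-integer third coordinate is what makes $\Gamma$ closed under the twisted product.
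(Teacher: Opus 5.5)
Your proof is correct and follows the paper's argument essentially step for step. Closure under the twisted product comes from $pq'-qp'\in\mathbb Z$, and discreteness comes from $\Gamma=\mathbb Z\times\mathbb Z\times\tfrac12\mathbb Z\subset\mathbb R^3$. Cocompactness comes from translating the horizontal coordinates into $[0,1)^2$ and then using the central elements $(0,0,t)$, $t\in\tfrac12\mathbb Z$, to move the third coordinate into $[0,\tfrac12)$. Your explicit choice $t=-\tfrac12\lfloor 2z'\rfloor$ and the remark on the free, properly discontinuous action are harmless additions.
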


\begin{proof}
For $(p,q,s),(p',q',s')\in\Gamma$, the third coordinate of the product is $s+s'+\frac12(pq'-qp')$. Since $pq'-qp'\in\mathbb Z$, it lies in $\frac12\mathbb Z$, so $\Gamma$ is closed under multiplication. Moreover
\[
(p,q,s)\ast(-p,-q,-s)=\bigl(0,0,\tfrac12(-pq+qp)\bigr)=(0,0,0),
\]
so inverses lie in $\Gamma$ and $\Gamma$ is a subgroup. It is discrete, being contained in the discrete subset $\mathbb Z\times\mathbb Z\times\frac12\mathbb Z$ of $\mathbb R^3$.

For cocompactness, let $(x,y,z)\in\mathbb H^3$. Left translation by $(p,q,0)$ with $p=-\lfloor x\rfloor$ and $q=-\lfloor y\rfloor$ moves the first two coordinates into $[0,1)^2$. Left translation by $(0,0,s)$ with $s\in\frac12\mathbb Z$ acts by $(x,y,z)\mapsto(x,y,z+s)$ and fixes the first two coordinates; a suitable $s$ moves the third coordinate into $[0,\frac12)$. Hence every left coset $\Gamma\ast(x,y,z)$ has a representative in the compact box $[0,1]^2\times[0,\frac12]$.
\end{proof}

Let $M:=\Gamma\backslash\mathbb H^3$, a closed connected $3$-manifold, and let $\pi:\mathbb H^3\to M$ be the canonical projection, which is surjective. The fields $X_1,X_2,X_3$ are left invariant, hence descend to $M$, where they are denoted by the same symbols. Take $e_1=X_1$ and $e_2=X_2$. For left-invariant orthonormal fields the Koszul formula reads
\[
\langle\nabla_XY,Z\rangle=\tfrac12\bigl(\langle[X,Y],Z\rangle-\langle[Y,Z],X\rangle+\langle[Z,X],Y\rangle\bigr).
\]
With $X=Y=X_i$ this gives $\langle\nabla_{X_i}X_i,Z\rangle=-\langle[X_i,Z],X_i\rangle$. For $i=1,2$, the bracket $[X_i,Z]$ is a multiple of $X_3\perp X_i$, and for $i=3$ it vanishes. Hence
\[
\nabla_{X_1}X_1=\nabla_{X_2}X_2=\nabla_{X_3}X_3=0 ,
\]
so $\pi_H(\nabla_{e_i}e_i)=0$ and $\zeta=0$, and \eqref{eq:tension} simplifies to
\begin{equation}\label{eq:tension-heis}
\tau_H(f)=\beta_H(f)(X_1,X_1)+\beta_H(f)(X_2,X_2)+df_H\bigl(\nabla^Hc\bigr).
\end{equation}

Lemma \ref{lem:2.1} with $\psi\equiv1$ and $\alpha$ dual to $uX_i$, $i=1,2$, gives $\int_MX_iu\,dv_g=0$ for all $u\in C^\infty(M)$, that is, $X_i^*=-X_i$ on $L^2(M,dv_g)$. Consequently
\begin{equation}\label{eq:sublaplacian}
\int_M\bigl(-(X_1^2+X_2^2)u\bigr)\,u\,dv_g=\int_M|\nabla^Hu|^2\,dv_g,\qquad u\in C^\infty(M).
\end{equation}
The form $u\mapsto\int_M|\nabla^Hu|^2dv_g$ on $C^\infty(M)$ is closable, with closure defined on $\mathcal H^1_H(M)$. Its associated nonnegative self-adjoint operator, the Friedrichs extension of $-(X_1^2+X_2^2)$, is denoted by $\mathcal L$ and called the sub-Laplacian. By Lemma \ref{lem:compact-embed} applied to the trivial line bundle, $\mathcal L$ has compact resolvent. Hence its spectrum is a sequence of eigenvalues of finite multiplicity
\[
0=\lambda_0<\lambda_1\le\lambda_2\le\cdots\longrightarrow\infty .
\]
An eigenfunction $\phi$ with eigenvalue $\lambda_j$ satisfies $(X_1^2+X_2^2+\lambda_j)\phi=0$ in the sense of distributions, so it is smooth by H\"ormander's hypoellipticity theorem \cite{Hor67}. If $\mathcal L\phi=0$, then $\int_M|\nabla^H\phi|^2dv_g=0$, so $\phi$ is constant by Lemma \ref{lem:horiz-const}. Thus $\lambda_0=0$ is simple. For $\mu>0$ put
\[
N(\mu):=\#\{j:\lambda_j<\mu\},
\qquad
\mathcal E_{<\mu}:=\operatorname{span}\{\text{eigenfunctions with eigenvalue }<\mu\},
\]
so that $\dim\mathcal E_{<\mu}=N(\mu)$, and let $\mult_{\mathcal L}(\mu)$ be the multiplicity of $\mu$ as an eigenvalue, zero if $\mu\notin\spec\mathcal L$.

For $(j,k)\in\mathbb Z^2$, the functions $\cos(2\pi(jx+ky))$ and $\sin(2\pi(jx+ky))$ on $\mathbb H^3$ are invariant under left translation by $\Gamma$, since such a translation changes $jx+ky$ by an integer. Hence they descend to $M$. They are independent of $z$, so $X_1$ and $X_2$ act on them as $\partial_x$ and $\partial_y$, and
\begin{equation}\label{eq:torus-modes}
\mathcal L\cos\bigl(2\pi(jx+ky)\bigr)=4\pi^2(j^2+k^2)\cos\bigl(2\pi(jx+ky)\bigr),
\end{equation}
and likewise for the sine.

\subsection{The Explicit Family}

Maps of the form $\gamma\circ s$, with $\gamma$ a geodesic of the target and $s$ a suitable function on the source, were shown to be exponentially subelliptic harmonic in \cite[\S2.3, Example 4]{CDE20}, where the construction is attributed to Duan. The family below is a special case, adapted to the compact quotient. Our purpose is the computation of its index and nullity.

\begin{theorem}\label{thm:heis-family}
Let $a,b\in\mathbb Z$ with $(a,b)\ne(0,0)$, let $n\ge2$ and $r>0$, and let $u,v\in\mathbb R^{n+1}$ be orthonormal. Define
\[
\gamma:\mathbb R\to S^n(r),\qquad
\gamma(t):=r\bigl(\cos(2\pi t)\,u+\sin(2\pi t)\,v\bigr),
\qquad
\ell:\mathbb H^3\to\mathbb R,\quad \ell(x,y,z):=ax+by .
\]
Then $\gamma\circ\ell$ descends to a smooth map $f:M\to S^n(r)$ with $f\circ\pi=\gamma\circ\ell$, and:
\begin{enumerate}
\item[\rm(i)] $f$ is nonconstant, and $f(M)=S^n(r)\cap P$ is a great circle, where $P:=\operatorname{span}\{u,v\}$;
\item[\rm(ii)] $df(X_1)=a\,\gamma'(\ell)$, $df(X_2)=b\,\gamma'(\ell)$ and $df(X_3)=0$; in particular $|df_H|^2\equiv\Sigma:=4\pi^2r^2(a^2+b^2)$ is constant;
\item[\rm(iii)] $f$ is exponentially subelliptic harmonic;
\item[\rm(iv)] $E_H(f)=\operatorname{vol}_g(M)\exp\bigl(2\pi^2r^2(a^2+b^2)\bigr)$.
\end{enumerate}
\end{theorem}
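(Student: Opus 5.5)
The proof is direct. We first check well-definedness, then compute the differential, and finally verify harmonicity using the simplified tension field \eqref{eq:tension-heis}.

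\emph{Descent.} For $(p,q,s)\in\Gamma$, left translation maps $(x,y,z)$ to a point whose first two coordinates are $(x+p,y+q)$. Hence $\ell$ changes by $ap+bq\in\mathbb Z$. Since $\gamma$ is $1$-periodic, $\gamma\circ\ell$ is $\Gamma$-invariant and descends to a smooth $f$ with $f\circ\pi=\gamma\circ\ell$.

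\emph{Items (i) and (ii).} The image of $\gamma$ is the great circle $S^n(r)\cap P$. Since $(a,b)\ne(0,0)$, $\ell$ is onto $\mathbb R$, so $f(M)=\gamma(\mathbb R)$ is that circle and $f$ is nonconstant. Next, $X_1\ell=a$, $X_2\ell=b$ and $X_3\ell=0$, because $\ell$ does not depend on $z$. The chain rule then gives $df(X_1)=a\gamma'(\ell)$, $df(X_2)=b\gamma'(\ell)$ and $df(X_3)=0$. As $|\gamma'|=2\pi r$, we get $|df_H|^2=4\pi^2r^2(a^2+b^2)$.

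\emph{Item (iii).} Since $c$ is constant, $\nabla^Hc=0$. By \eqref{eq:tension-heis} it therefore suffices to show
\[
\beta_H(f)(X_1,X_1)+\beta_H(f)(X_2,X_2)=0.
\]
Using \eqref{eq:betaH-diagonal} together with $\pi_H(\nabla_{X_i}X_i)=0$, we have $\beta_H(f)(X_i,X_i)=\tilde\nabla_{X_i}df(X_i)$. Now
\[
\tilde\nabla_{X_1}\bigl(a\gamma'(\ell)\bigr)=a\,(X_1\ell)\,\tilde\nabla_{\gamma'}\gamma'=a^2\,\tilde\nabla_{\gamma'}\gamma'=0,
\]
because $\gamma$ is a constant-speed great circle and hence a geodesic. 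The same computation applies to $X_2$. Thus $\tau_H(f)=0$, and Theorem \ref{th:2.3} shows that $f$ is exponentially subelliptic harmonic.

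\emph{Item (iv).} Since $c=\Sigma/2$ is constant, $E_H(f)=\operatorname{vol}_g(M)\,e^{\Sigma/2}$, which is the stated value.

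The only delicate point is the geodesic step in (iii). There we should justify $\tilde\nabla_{X_i}(\gamma'\circ\ell)=(X_i\ell)\,(\tilde\nabla_{\gamma'}\gamma')\circ\ell$, which holds because $\gamma'\circ\ell$ is the pullback of a vector field along $\gamma$ under the map $\ell$. Everything else is routine.
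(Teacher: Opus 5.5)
Your proof is correct and follows the paper's argument essentially step by step. You use the same ingredients: $\Gamma$-invariance of $\gamma\circ\ell$ via $\ell\circ L_\lambda=\ell+(ap+bq)$ and the $1$-periodicity of $\gamma$, the chain-rule computation of $df(X_i)$, and harmonicity through \eqref{eq:tension-heis}, reduced to $\tilde\nabla_{\gamma'}\gamma'=0$. The only cosmetic difference is that the paper writes out the term $(X_iX_i\ell)\,\gamma'(\ell)$ and checks that it vanishes, whereas you absorb it by treating $a,b$ as constant coefficients.
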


\begin{proof}
The function $\ell$ is not defined on $M$. All differential computations are performed on the covering group $\mathbb H^3$. Since $X_1,X_2,X_3$ are left invariant and the resulting expressions, such as $\gamma'(\ell)$, are $\Gamma$-invariant by the periodicity of $\gamma$, they descend to $M$.

(i) For $\lambda=(p,q,s)\in\Gamma$, the left translation $L_\lambda(x,y,z)=\lambda\ast(x,y,z)$ has first two coordinates $x+p$ and $y+q$, so
\[
\ell\circ L_\lambda=\ell+(ap+bq),\qquad ap+bq\in\mathbb Z .
\]
Since $\gamma$ is $1$-periodic, $\gamma\circ\ell\circ L_\lambda=\gamma\circ\ell$ for every $\lambda\in\Gamma$. Hence $\gamma\circ\ell$ is $\Gamma$-invariant and descends to a unique smooth map $f$ on $M$ with $f\circ\pi=\gamma\circ\ell$. Since $\pi$ is surjective, $f(M)=(\gamma\circ\ell)(\mathbb H^3)$, and since $\ell$ is onto $\mathbb R$, this equals $\gamma(\mathbb R)=S^n(r)\cap P$. As $\gamma$ is injective on $[0,1)$, $f$ is nonconstant.

(ii) Since $\ell$ depends only on $x$ and $y$,
\[
X_1\ell=\partial_x\ell-\tfrac y2\partial_z\ell=a,\qquad
X_2\ell=\partial_y\ell+\tfrac x2\partial_z\ell=b,\qquad
X_3\ell=\partial_z\ell=0 .
\]
By the chain rule applied to $f\circ\pi=\gamma\circ\ell$,
\begin{align}
df(X_1)&=(X_1\ell)\,\gamma'(\ell)=a\,\gamma'(\ell),\\
df(X_2)&=(X_2\ell)\,\gamma'(\ell)=b\,\gamma'(\ell),\\
df(X_3)&=(X_3\ell)\,\gamma'(\ell)=0 .
\end{align}
Since $\gamma'(t)=2\pi r\bigl(-\sin(2\pi t)u+\cos(2\pi t)v\bigr)$ has constant norm $2\pi r$,
\[
|df_H|^2=|df(X_1)|^2+|df(X_2)|^2=(a^2+b^2)\,|\gamma'(\ell)|^2=4\pi^2r^2(a^2+b^2).
\]

(iii) By (ii), $c$ is constant, so $\nabla^Hc=0$ and $\zeta=0$. By \eqref{eq:tension-heis} it suffices to show $\beta_H(f)(X_1,X_1)+\beta_H(f)(X_2,X_2)=0$. As $\nabla_{X_i}X_i=0$, formula \eqref{eq:betaH} and the chain rule give, for $i=1,2$,
\begin{align}
\beta_H(f)(X_i,X_i)
&=\tilde\nabla_{X_i}\bigl(df(X_i)\bigr)\notag\\
&=\tilde\nabla_{X_i}\Bigl(\bigl(X_i\ell\bigr)\,\gamma'(\ell)\Bigr)\notag\\
&=\bigl(X_iX_i\ell\bigr)\,\gamma'(\ell)+\bigl(X_i\ell\bigr)^2\,\tilde\nabla_{\gamma'}\gamma' .
\end{align}
Now $X_1X_1\ell=X_1(a)=0$ and $X_2X_2\ell=X_2(b)=0$, while $\tilde\nabla_{\gamma'}\gamma'=0$ because $\gamma$ is a constant-speed great circle, hence a geodesic of $S^n(r)$. Therefore $\tau_H(f)=0$.

(iv) Immediate from (ii).
\end{proof}

\subsection{Reduction and Proof of Theorem \ref{thm:D}}

Let $\mathbf t:=\gamma'(\ell)/(2\pi r)$, a unit section of $f^{-1}TS^n(r)$ tangent to $f(M)$. Let $w_1,\dots,w_{n-1}$ be an orthonormal basis of $P^\perp$ with corresponding constant sections $W_1,\dots,W_{n-1}$, and put
\[
D:=aX_1+bX_2,
\qquad
\lambda:=\frac{\Sigma}{r^2}=4\pi^2(a^2+b^2).
\]

\begin{proposition}\label{prop:heis-split}
$\{\mathbf t,W_1,\dots,W_{n-1}\}$ is a global parallel orthonormal frame of $f^{-1}TS^n(r)$, so $k(f)=n$. For $v=(u_0,v_1,\dots,v_{n-1})$, with $u_0$ the coefficient of $\mathbf t$, the reduced form \eqref{eq:reduced} is
\begin{equation}\label{eq:heis-split}
e^{-\Sigma/2}\,\mathcal Q_f(v)=\mathcal Q_0(u_0)+\sum_{\alpha=1}^{n-1}\mathcal Q_1(v_\alpha),
\end{equation}
where
\[
\mathcal Q_0(u):=\int_M\Bigl[|\nabla^Hu|^2+4\pi^2r^2(Du)^2\Bigr]dv_g,
\qquad
\mathcal Q_1(v):=\int_M\Bigl[|\nabla^Hv|^2-\lambda v^2\Bigr]dv_g .
\]
Thus the form consists of a nonnegative tangential block with modified principal part and $n-1$ copies of the scalar Schr\"odinger form of $\mathcal L-\lambda$.
\end{proposition}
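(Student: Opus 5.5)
First I check that the frame is parallel and orthonormal. Then I compute the data $\xi_\alpha$ and $\rho_{\alpha\beta}$ of \eqref{eq:xi-rho}, substitute them into \eqref{eq:reduced}, and factor out the constant weight $e^{c}=e^{\Sigma/2}$ (Theorem \ref{thm:heis-family}(ii)).

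\emph{Parallel orthonormal frame.} The sections $W_\alpha$ are parallel by the argument of Proposition \ref{prop:subsphere}, because $f(M)\subset P$ and $w_\alpha\in P^\perp$ is constant. For $\mathbf t$ I work on $\mathbb H^3$, where $\mathbf t=\gamma'(\ell)/(2\pi r)$. For any $Y$, the chain rule gives $\tilde\nabla_Y\mathbf t=(Y\ell)\,\tilde\nabla_{\gamma'}\gamma'/(2\pi r)=0$, since $\gamma$ is a geodesic; for $Y=X_3$ this holds trivially because $X_3\ell=0$. Orthonormality is pointwise: $\gamma'(\ell)\in P$ has norm $2\pi r$, and $P\perp P^\perp$. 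Since there are $n$ sections, $k(f)=n$.

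\emph{Coefficients.} By Theorem \ref{thm:heis-family}(ii), $df(X_1)=2\pi r a\,\mathbf t$ and $df(X_2)=2\pi r b\,\mathbf t$. Hence $\xi_{\mathbf t}=2\pi r(aX_1+bX_2)=2\pi r\,D$, and $\xi_\alpha=0$ for $\alpha\ge1$ because $w_\alpha\perp P$. Then $\sum\langle\nabla^Hv_\alpha,\xi_\alpha\rangle=2\pi r\,Du_0$, and its square gives the term $4\pi^2r^2(Du_0)^2$.

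For the curvature I use $\tilde R^N(X,Y)Z=r^{-2}(\langle Y,Z\rangle X-\langle X,Z\rangle Y)$. This gives
\[
\rho_{\alpha\beta}=r^{-2}\sum_i\bigl(\langle W_\alpha,W_\beta\rangle|df(e_i)|^2-\langle df(e_i),W_\beta\rangle\langle W_\alpha,df(e_i)\rangle\bigr).
\]
Since every $df(e_i)$ is parallel to $\mathbf t$, the cases are:
\begin{itemize}
\item[] $\rho_{\mathbf t\mathbf t}=r^{-2}(\Sigma-\Sigma)=0$;
\item[] $\rho_{\mathbf t\alpha}=0$;
\item[] $\rho_{\alpha\beta}=\delta_{\alpha\beta}\Sigma/r^2=\lambda\delta_{\alpha\beta}$ for $\alpha,\beta\ge1$.
\end{itemize}
Substituting these into \eqref{eq:reduced} and dividing by $e^{\Sigma/2}$ yields \eqref{eq:heis-split}. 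The block $\mathcal Q_0$ is visibly nonnegative. By \eqref{eq:sublaplacian}, each $\mathcal Q_1$ is the form of $\mathcal L-\lambda$.

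\emph{Main obstacle.} The only delicate step is differentiating $\mathbf t$, which is defined through the non-descending function $\ell$. I handle it as in the proof of Theorem \ref{thm:heis-family}: compute on $\mathbb H^3$, and use $\Gamma$-invariance of the resulting expressions to descend them to $M$.
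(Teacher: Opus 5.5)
Your proposal is correct and follows the paper's proof essentially step by step. Like the paper, you show that $\mathbf t$ is parallel via the geodesic equation on $\mathbb H^3$ and that the $W_\alpha$ are parallel via Proposition \ref{prop:subsphere}, then read off $\xi_{\mathbf t}=2\pi r\,D$, $\xi_\alpha=0$ and $\rho=\diag(0,\lambda,\dots,\lambda)$ from $df(X_i)=2\pi r\,(X_i\ell)\,\mathbf t$ and the constant-curvature tensor, and finally factor out the constant weight $e^{\Sigma/2}$.
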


\begin{proof}
The computations are carried out on $\mathbb H^3$, as in the proof of Theorem \ref{thm:heis-family}. Since $\gamma$ is a geodesic, $\tilde\nabla_{\gamma'}\gamma'=0$, so for every $Y\in\Gamma(TM)$
\[
\tilde\nabla_Y\mathbf t=\frac{(Y\ell)}{2\pi r}\,\tilde\nabla_{\gamma'}\gamma'=0 ,
\]
and $W_\alpha\in P(f)$ by Proposition \ref{prop:subsphere} with $k=1$. As $\mathbf t$ takes values in $P$ and $w_\alpha\in P^\perp$, the frame is orthonormal, and it spans $f^{-1}TS^n(r)$ since the fibres have dimension $n$.

We compute the data \eqref{eq:xi-rho}. By Theorem \ref{thm:heis-family}(ii), $df(X_i)=2\pi r\,(X_i\ell)\,\mathbf t$. Hence
\[
\bigl\langle\mathbf t,df(X_i)\bigr\rangle=2\pi r\,(X_i\ell),
\qquad
\bigl\langle W_\alpha,df(X_i)\bigr\rangle=0,
\]
so that
\[
\xi_{\mathbf t}=2\pi r\bigl(aX_1+bX_2\bigr)=2\pi r\,D,
\qquad
\xi_\alpha=0,
\qquad
\bigl\langle\nabla^Hu_0,\xi_{\mathbf t}\bigr\rangle=2\pi r\,Du_0 .
\]
For the curvature terms, $\tilde R^N(X,Y)Z=r^{-2}(\langle Y,Z\rangle X-\langle X,Z\rangle Y)$ gives, for $V,V'\in T_{f(x)}S^n(r)$,
\[
\mathcal R_T(V,V')=\frac1{r^2}\sum_{i=1}^2\Bigl[|df(X_i)|^2\langle V,V'\rangle-\langle df(X_i),V\rangle\langle df(X_i),V'\rangle\Bigr]
=\frac{\Sigma}{r^2}\Bigl[\langle V,V'\rangle-\langle\mathbf t,V\rangle\langle\mathbf t,V'\rangle\Bigr],
\]
using $\sum_i|df(X_i)|^2=\Sigma$ and $\sum_i\langle df(X_i),V\rangle\langle df(X_i),V'\rangle=\Sigma\langle\mathbf t,V\rangle\langle\mathbf t,V'\rangle$. Consequently
\[
\rho_{\mathbf t\mathbf t}=0,
\qquad
\rho_{\mathbf t\alpha}=0,
\qquad
\rho_{\alpha\beta}=\lambda\,\delta_{\alpha\beta}.
\]
Inserting these data into \eqref{eq:reduced}, and using that $e^{c}=e^{\Sigma/2}$ is constant, gives \eqref{eq:heis-split}.
\end{proof}

\begin{theorem}\label{thm:heis-spectral}
Let $n\ge2$ and let $f$ be as in Theorem \ref{thm:heis-family}. Then
\[
\ind(f)=(n-1)\,N(\lambda),
\qquad
\nul(f)=1+(n-1)\,\mult_{\mathcal L}(\lambda),
\qquad
\mult_{\mathcal L}(\lambda)\ge4 .
\]
In particular $\ind(f)\ge n-1$, $\nul(f)\ge4n-3$, and $f$ is unstable, for every $r>0$.
\end{theorem}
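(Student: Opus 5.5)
Since Proposition \ref{prop:heis-split} gives $k(f)=n$, Theorem \ref{thm:reduction} identifies $\ind(f)$ and $\nul(f)$ with the index and the radical dimension of $\mathcal Q_f$ on $\mathcal H^1_H(M;\mathbb R^n)$. The positive constant $e^{\Sigma/2}$ affects neither. By \eqref{eq:heis-split} the form is an orthogonal direct sum of $\mathcal Q_0$ on the $u_0$-component and $n-1$ copies of $\mathcal Q_1$. Index and nullity are additive over such a splitting, since negative definite subspaces and radicals decompose componentwise. The plan is therefore to compute $\ind(\mathcal Q_0)$, $\nul(\mathcal Q_0)$, $\ind(\mathcal Q_1)$ and $\nul(\mathcal Q_1)$, all on the closure $\mathcal H^1_H(M)$, using Proposition \ref{prop:finiteness}(i) to pass between smooth and completed spaces.

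For the tangential block, $\mathcal Q_0(u)\ge\int_M|\nabla^Hu|^2dv_g\ge0$, so $\ind(\mathcal Q_0)=0$. A nonnegative form has as radical exactly its null vectors. If $\mathcal Q_0(u)=0$, then $\nabla^Hu=0$ in $L^2$, so $\mathcal Lu=0$ in the form sense. Since $\lambda_0=0$ is simple with constant eigenfunctions, $u$ is constant. Conversely, constants are null. Hence $\nul(\mathcal Q_0)=1$; this direction corresponds to the Killing direction $\mathbf t$ of Proposition \ref{prop:killing}.

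For $\mathcal Q_1(v)=\langle\!\langle(\mathcal L-\lambda)v,v\rangle\!\rangle$ in the form sense on $\mathcal H^1_H(M)$, I would expand $v$ in an orthonormal eigenbasis of $\mathcal L$, which exists because $\mathcal L$ has compact resolvent. Then $\mathcal Q_1(v)=\sum_j(\lambda_j-\lambda)|\hat v_j|^2$. By min--max, the index is $N(\lambda)$, realized by $\mathcal E_{<\lambda}$, whose elements are smooth by hypoellipticity. The radical is the $\lambda$-eigenspace, of dimension $\mult_{\mathcal L}(\lambda)$. Summing the blocks gives $\ind(f)=(n-1)N(\lambda)$ and $\nul(f)=1+(n-1)\mult_{\mathcal L}(\lambda)$.

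It remains to show $\mult_{\mathcal L}(\lambda)\ge4$, which is the only step requiring a construction. By \eqref{eq:torus-modes}, the functions $\cos(2\pi(jx+ky))$ and $\sin(2\pi(jx+ky))$ with $j^2+k^2=a^2+b^2$ are eigenfunctions for $\lambda$. I would use the pairs $(j,k)=(a,b)$ and $(j,k)=(-b,a)$. These are not $\pm$ each other, because $(a,b)\ne(0,0)$: the relation $(-b,a)=\pm(a,b)$ would force $a=\mp b$ and $b=\pm a$, hence $a=b=0$. This gives four functions. Their linear independence follows from the orthogonality of distinct characters $e^{2\pi i(jx+ky)}$ in $L^2(M)$, obtained by integrating over the fundamental box of Lemma \ref{lem:lattice}, where the $z$-integration is trivial. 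Finally, $N(\lambda)\ge1$ because $\lambda_0=0<\lambda$, so $\ind(f)\ge n-1\ge1$, which proves instability. Also $\nul(f)\ge1+4(n-1)=4n-3$. All of these bounds are independent of $r$, since $\lambda$ does not involve $r$.
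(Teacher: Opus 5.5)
Your proposal is correct and follows the paper's proof essentially step for step: the reduction through the global parallel frame, the nonnegative tangential block whose radical is the constants, the spectral computation for the $n-1$ copies of $\mathcal Q_1$, and the four Fourier modes at the frequencies $(a,b)$ and $(-b,a)$. One correction to your justification: the index is additive over the orthogonal splitting, but not because negative definite subspaces decompose componentwise (in general they do not). The correct reason is that the tuples with every $v_\alpha\perp\mathcal E_{<\lambda}$ form a subspace of codimension at most $(n-1)N(\lambda)$ on which the whole form is nonnegative, and this is exactly the upper-bound argument the paper uses.
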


\begin{proof}
Since $k(f)=n$, Theorem \ref{thm:reduction} and Proposition \ref{prop:heis-split} show that $\ind(f)$ is the index of the form $\mathcal Q_0\oplus\mathcal Q_1^{\oplus(n-1)}$ on smooth tuples, and that $\nul(f)$ is the dimension of its radical on $\mathcal H^1_H(M)^{\oplus n}$. We treat the index, the nullity and the multiplicity in turn.

\emph{Index.} Let $\{\phi_j\}$ be an $L^2(M,dv_g)$-orthonormal basis of eigenfunctions of $\mathcal L$, with $\mathcal L\phi_j=\lambda_j\phi_j$. For $v\in\mathcal E_{<\lambda}\setminus\{0\}$, write $v=\sum_{\lambda_j<\lambda}\hat v_j\phi_j$. By \eqref{eq:sublaplacian},
\[
\mathcal Q_1(v)=\sum_{j:\lambda_j<\lambda}\bigl(\lambda_j-\lambda\bigr)|\hat v_j|^2\ <\ 0 .
\]
Hence the form is negative definite on the space
\[
\mathcal W_-:=\bigl\{(0,v_1,\dots,v_{n-1}):\ v_\alpha\in\mathcal E_{<\lambda}\bigr\}
\]
of smooth tuples. This space has dimension $(n-1)N(\lambda)$, and $\ind(f)\ge(n-1)N(\lambda)$.

Conversely, consider the subspace of tuples $(u_0,v_1,\dots,v_{n-1})$ with $\langle v_\alpha,\phi_j\rangle_{L^2}=0$ for every $\alpha$ and every $j$ with $\lambda_j<\lambda$, the coefficient $u_0$ being unrestricted. It is defined by $(n-1)N(\lambda)$ linear conditions, so it has codimension at most $(n-1)N(\lambda)$. For $v\perp\mathcal E_{<\lambda}$ the spectral theorem gives $\int_M|\nabla^Hv|^2dv_g\ge\lambda\int_Mv^2dv_g$, whether or not $\lambda$ is itself an eigenvalue. Thus $\mathcal Q_1(v)\ge0$. Together with $\mathcal Q_0\ge0$, the form is nonnegative on this subspace. A subspace of dimension exceeding $(n-1)N(\lambda)$ on which the form is negative definite would meet it nontrivially, which is impossible. Hence $\ind(f)\le(n-1)N(\lambda)$.

\emph{Nullity.} The radical of an orthogonal sum of forms is the sum of the radicals. The form $\mathcal Q_0$ is nonnegative, so by the Cauchy--Schwarz inequality its radical is $\{u:\mathcal Q_0(u)=0\}$. This set is contained in $\{u:\nabla^Hu=0\}=\ker\mathcal L$, which consists of the constants; conversely, constants lie in it. So the radical of $\mathcal Q_0$ is one dimensional. The form $\mathcal Q_1$ is the form of $\mathcal L-\lambda$, so its radical on $\mathcal H^1_H(M)$ is $\ker(\mathcal L-\lambda)$, of dimension $\mult_{\mathcal L}(\lambda)$. This gives the formula for $\nul(f)$. The tangential null direction $\mathbf t$ generates the variation $f_t=\gamma\circ(\ell+t/(2\pi r))$, along which $E_H$ is constant.

\emph{Multiplicity.} By \eqref{eq:torus-modes}, the four functions
\[
\cos\bigl(2\pi(ax+by)\bigr),\quad \sin\bigl(2\pi(ax+by)\bigr),\quad \cos\bigl(2\pi(-bx+ay)\bigr),\quad \sin\bigl(2\pi(-bx+ay)\bigr)
\]
are eigenfunctions of $\mathcal L$ with eigenvalue $4\pi^2(a^2+b^2)=4\pi^2((-b)^2+a^2)=\lambda$. The frequency vectors $(a,b)$ and $(-b,a)$ are nonzero and orthogonal, so neither equals $\pm$ the other, and the four real Fourier modes are mutually orthogonal in $L^2$, hence linearly independent. Thus $\mult_{\mathcal L}(\lambda)\ge4$, and $\nul(f)\ge1+4(n-1)=4n-3$.

Finally, $\lambda_0=0<\lambda$ gives $N(\lambda)\ge1$, so $\ind(f)\ge n-1\ge1$.
\end{proof}

This proves Theorem \ref{thm:D}.

\begin{remark}\label{rem:killing-check}
The isometries of the target account for part, but not all, of the null space. For $A\in\mathfrak{so}(n+1)$ the field $K_A(q)=Aq$ is a Killing field of $S^n(r)$, so each $K_A\circ f$ is a null direction by Proposition \ref{prop:killing}. For $w\in P^\perp$ and $A=w\otimes u^\flat-u\otimes w^\flat$ one has $K_A(q)=\langle u,q\rangle w-\langle w,q\rangle u$. Since $\langle w,f\rangle=0$,
\[
K_A\circ f=\langle u,f\rangle\,w=r\cos(2\pi\ell)\,w .
\]
Replacing $u$ by $v$ gives $r\sin(2\pi\ell)\,w$, and $A=v\otimes u^\flat-u\otimes v^\flat$ gives
\[
K_A\circ f=\langle u,f\rangle v-\langle v,f\rangle u=r\bigl(\cos(2\pi\ell)v-\sin(2\pi\ell)u\bigr)=r\,\mathbf t .
\]
These sections span a subspace of dimension $1+2(n-1)=2n-1$, which equals $\dim SO(n+1)-\dim SO(n-1)$, the dimension of the orbit of $f$ under the isometry group of $S^n(r)$. By Theorem \ref{thm:heis-spectral}, however, $\nul(f)\ge4n-3>2n-1$. The additional null directions $\cos(2\pi(-bx+ay))\,w$ and $\sin(2\pi(-bx+ay))\,w$, $w\in P^\perp$, come from the rotated frequency $(-b,a)$ and are not produced by isometries of the target. We do not claim that they integrate to families of critical maps. The Killing directions also explain why $N(\lambda)$ is defined with a strict inequality: eigenvalues equal to $\lambda$ contribute to the nullity, not to the index.
\end{remark}

\begin{remark}\label{rem:exact-mult}
The spectrum of the sub-Laplacian on Heisenberg nilmanifolds is known explicitly. See Thangavelu \cite[\S4.1]{Tha09}, whose group law, lattice $\mathbb Z^{2}\times\frac12\mathbb Z$ and normalization of the sub-Laplacian agree with ours up to the names of the coordinates and the sign of $[X_1,X_2]$, which does not affect the spectrum. For the lattice $\Gamma$ above, $L^2(M)$ decomposes according to the characters of the centre. The functions independent of $z$ give the eigenvalues $4\pi^2(j^2+k^2)$ with $(j,k)\in\mathbb Z^2$, by \eqref{eq:torus-modes}. Since invariance under $(0,0,\frac12)$ forces period $\frac12$ in $z$, the other characters are $z\mapsto e^{4\pi imz}$ with $m\in\mathbb Z\setminus\{0\}$, on which $X_3$ acts as $4\pi im$, and they give Landau-type eigenvalues $4\pi|m|(2q+1)$, $q\ge0$. These two families never meet at $\lambda=4\pi^2(a^2+b^2)$, since $4\pi^2d=4\pi|m|(2q+1)$ with integers $d>0$ would give $\pi=|m|(2q+1)/d\in\mathbb Q$. With this spectral decomposition one obtains
\[
\mult_{\mathcal L}(\lambda)=r_2(a^2+b^2),\qquad r_2(d):=\#\{(j,k)\in\mathbb Z^2:j^2+k^2=d\},
\]
and hence $\nul(f)=1+(n-1)\,r_2(a^2+b^2)$. For example, $(a,b)=(1,0)$ gives $r_2(1)=4$ and $\nul(f)=4n-3$. This refinement relies on the cited spectral decomposition and is not used elsewhere.
\end{remark}

\begin{corollary}\label{cor:heis-lattice}
With $f$ and $\lambda$ as above,
\[
\ind(f)\ \ge\ (n-1)\cdot\#\bigl\{(j,k)\in\mathbb Z^2:\ j^2+k^2<a^2+b^2\bigr\}.
\]
In particular $\ind(f)\ge5(n-1)$ whenever $a^2+b^2\ge2$.
\end{corollary}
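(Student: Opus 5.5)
The argument combines Theorem \ref{thm:heis-spectral}, which gives $\ind(f)=(n-1)N(\lambda)$, with a lower bound on $N(\lambda)$ obtained from the explicit torus modes \eqref{eq:torus-modes}. The first step is to bound $N(\lambda)$ below by counting $z$-independent eigenfunctions of $\mathcal L$ with eigenvalue below $\lambda=4\pi^2(a^2+b^2)$.

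To each lattice point $(j,k)\in\mathbb Z^2$ I assign one function. For $(j,k)=(0,0)$ take the constant $1$. For the remaining points, split $\mathbb Z^2\setminus\{0\}$ into pairs $\{(j,k),(-j,-k)\}$. To one member of each pair assign $\cos(2\pi(jx+ky))$, and to the other assign $\sin(2\pi(jx+ky))$. By \eqref{eq:torus-modes}, each of these functions is an eigenfunction of $\mathcal L$ with eigenvalue $4\pi^2(j^2+k^2)$. This number is the same for both members of a pair, and it is $<\lambda$ exactly when $j^2+k^2<a^2+b^2$. The lattice points satisfying this inequality form a set invariant under $(j,k)\mapsto(-j,-k)$, so the assignment restricts to a bijection between those points and a family of real Fourier modes.

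The second step is linear independence of this family. Write the functions as functions of $(x,y)$ on the torus $[0,1)^2$. A fundamental domain of $\Gamma$ is $[0,1)^2\times[0,\tfrac12)$, and the volume form $dv_g=dx\,dy\,dz$ is the Lebesgue measure there, since the frame $X_1,X_2,X_3$ has determinant $1$. Hence $L^2(M)$ inner products of $z$-independent functions equal $\tfrac12$ times the $L^2$ inner products on the torus. The standard real Fourier system, consisting of $1$ together with $\cos$ and $\sin$ over representatives of $\pm$ pairs, is orthogonal on the torus, so the family is linearly independent. It spans a subspace of $\mathcal E_{<\lambda}$, and therefore $N(\lambda)\ge\#\{(j,k):j^2+k^2<a^2+b^2\}$. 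Multiplying by $n-1$ gives the first assertion.

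For the particular case, suppose $a^2+b^2\ge2$. The five points $(0,0)$, $(\pm1,0)$ and $(0,\pm1)$ all satisfy $j^2+k^2\le1<2$. The count is therefore at least $5$, and $\ind(f)\ge5(n-1)$ follows.

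The only point needing care is the pairing bookkeeping, which ensures that each lattice point corresponds to exactly one independent real function; the rest is routine.
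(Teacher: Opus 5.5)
Your proposal is correct and follows the paper's proof essentially verbatim. You combine $\ind(f)=(n-1)N(\lambda)$ from Theorem~\ref{thm:heis-spectral} with the count of $z$-independent torus modes, assigning one real function per lattice point via the pairs $\{(j,k),(-j,-k)\}$, and then use their $L^2$-orthogonality. Your explicit check that $dv_g=dx\,dy\,dz$ on the fundamental domain $[0,1)^2\times[0,\tfrac12)$ is a detail the paper leaves implicit.
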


\begin{proof}
By \eqref{eq:torus-modes}, for $(j,k)\in\mathbb Z^2$ the functions $\cos(2\pi(jx+ky))$ and $\sin(2\pi(jx+ky))$ are eigenfunctions of $\mathcal L$ with eigenvalue $4\pi^2(j^2+k^2)$. The lattice point $(0,0)$ contributes the constants, of real dimension $1$. Each pair $\{(j,k),(-j,-k)\}$ with $(j,k)\ne(0,0)$ contributes two independent real functions, and these are orthogonal in $L^2$ to those of other pairs. Hence the real span of these functions over all lattice points with $j^2+k^2<a^2+b^2$ has dimension equal to the number of such points, and $N(\lambda)$ is at least that number. For $a^2+b^2\ge2$ these points include $(0,0),(\pm1,0),(0,\pm1)$. Theorem \ref{thm:heis-spectral} concludes.
\end{proof}

\begin{remark}\label{rem:ext-matrix}
For this family the coefficient matrix $\mathbb I$ of Theorem \ref{thm:index} can be computed explicitly, which separates the two sources of loss in the averaging bound. Here $p=1$. Take the ambient orthonormal basis $u,v,w_1,\dots,w_{n-1}$ of $\mathbb R^{n+1}$, and put $\theta:=2\pi\ell$ and $V_M:=\operatorname{vol}_g(M)$. Since $f=r(\cos\theta\,u+\sin\theta\,v)$, the tangential projections along $f$ are
\[
u^\top=u-\cos\theta\,\bigl(\cos\theta\,u+\sin\theta\,v\bigr)=-\sin\theta\,\mathbf t,
\qquad
v^\top=\cos\theta\,\mathbf t,
\qquad
w_\alpha^\top=W_\alpha .
\]
By \eqref{eq:heis-split}, $I_H(u^\top,u^\top)=e^{\Sigma/2}\mathcal Q_0(\sin\theta)$. Now $\nabla^H\sin\theta=\cos\theta\,\nabla^H\theta$ with $|\nabla^H\theta|^2=\lambda$, and $D\sin\theta=2\pi(a^2+b^2)\cos\theta$, so $4\pi^2r^2(D\sin\theta)^2=\lambda\Sigma\cos^2\theta$. Since $\cos^2\theta$ and $\sin^2\theta$ have mean $\frac12$ over $M$ and $\cos\theta\sin\theta$ has mean zero, one finds
\[
\mathcal Q_0(\sin\theta)=\mathcal Q_0(\cos\theta)=\tfrac12\lambda(1+\Sigma)V_M,
\]
while the mixed tangential entry vanishes. The tangential--normal entries vanish by \eqref{eq:heis-split}, and $I_H(W_\alpha,W_\beta)=-e^{\Sigma/2}\lambda V_M\delta_{\alpha\beta}$ by \eqref{eq:subsphere-index}. Hence
\[
\mathbb I=e^{\Sigma/2}\lambda V_M\,\diag\Bigl(\tfrac{1+\Sigma}2,\ \tfrac{1+\Sigma}2,\ \underbrace{-1,\dots,-1}_{n-1}\Bigr),
\qquad
\tr\mathbb I=e^{\Sigma/2}\lambda V_M\bigl(\Sigma-(n-2)\bigr),
\]
in agreement with \eqref{eq:master} and \eqref{eq:Qsphere}. The matrix $\mathbb I$ has exactly $n-1$ negative eigenvalues for every $r>0$. Using only the trace loses these once the two positive tangential eigenvalues balance or outweigh them, that is, once $\Sigma\ge n-2$. Using only the finite-dimensional extrinsic test space loses the difference between $n-1$ and the full index $(n-1)N(\lambda)$.
\end{remark}

\begin{remark}\label{rem:weyl}
The eigenfunctions of Corollary \ref{cor:heis-lattice} are those independent of $z$, and account only for part of the spectrum. For an equiregular sub-Laplacian on a closed manifold, M\'etivier \cite{Met76} proved the Weyl law $N(\mu)\sim C\mu^{Q/2}$ as $\mu\to\infty$, where $Q$ is the homogeneous dimension and $C>0$ is determined by the sub-Riemannian geometry and the normalization of the operator. Multiplying $dv_g$ by a positive constant changes neither the operator nor its eigenvalues. The law applies here because $H$ is equiregular, with $\dim H=2$ and $\dim(H+[H,H])=3$ at every point, so that $Q=1\cdot2+2\cdot1=4$. Hence
\[
\ind(f)=(n-1)N(\lambda)\ \sim\ (n-1)\,C\,\lambda^{2}=(n-1)\,C\,(4\pi^2)^2\bigl(a^2+b^2\bigr)^2
\qquad\text{as }a^2+b^2\to\infty .
\]
For fixed $n\ge3$ and fixed $r>0$, the averaging bound of Corollary \ref{cor:index-sphere} is vacuous in the same regime, by Remark \ref{rem:r-bound}. If $r$ is allowed to vary with $(a,b)$, for instance so that $\Sigma$ stays fixed, this need not be the case. The exponent is governed by half the homogeneous dimension rather than half the topological dimension, a feature specific to the sub-Riemannian setting.
\end{remark}

\begin{remark}\label{rem:r-bound}
The normal blocks of \eqref{eq:heis-split} do not involve $r$, since $\lambda=4\pi^2(a^2+b^2)$. The radius enters only through the positive constant $e^{\Sigma/2}$ and the nonnegative tangential term $4\pi^2r^2(Du_0)^2$. Hence, under the trivialization $f^{-1}TS^n(r)\cong M\times\mathbb R^n$ furnished by the parallel frame, the negative spectral subspace $\mathcal W_-$ of the proof of Theorem \ref{thm:heis-spectral} corresponds to the same space of coefficient functions for all $r>0$. By contrast, Corollary \ref{cor:index-sphere} gives here
\[
\ind(f)\ \ge\ \max\bigl\{0,\ \lceil(n-2)-\Sigma\rceil\bigr\},
\]
which decreases in $r$ and is vacuous once $\Sigma=4\pi^2r^2(a^2+b^2)\ge n-2$. For $n\ge3$ and
\[
r<\frac{\sqrt{n-2}}{2\pi\sqrt{a^2+b^2}}
\]
the pointwise hypothesis of Theorem \ref{thm:sphere} is satisfied. This range is nonempty, so the hypotheses of the instability and rigidity statements there are realized by nonconstant maps. The equality in Theorem \ref{thm:reduction} uses $k(f)=n$, which holds here because the image is a great circle of a round sphere and a global parallel frame has been exhibited. For a closed geodesic in a general target the normal holonomy may be nontrivial. If the image spans a totally geodesic $S^k(r)$ with $k\ge2$, Proposition \ref{prop:subsphere} gives only $k(f)\ge n-k$ and the reduction yields a lower bound.
\end{remark}

\appendix

\section{The Jacobi Operator}\label{sec:jacobi}

We record the differential operator that represents the index form, and compute its principal symbol. Recall $\mathcal F$ from Corollary \ref{cor:F-vanishes}, and put
\[
\Delta_HW:=\sum_{i=1}^m\Bigl(\tilde\nabla_{e_i}\tilde\nabla_{e_i}W-\tilde\nabla_{\pi_H(\nabla_{e_i}e_i)}W\Bigr).
\]

\begin{theorem}\label{thm:jacobi-explicit}
Let $f$ be exponentially subelliptic harmonic and define
\begin{align}
L_HW:={}&-\Delta_HW+\tilde\nabla_\zeta W-\tilde\nabla_{\nabla^Hc}W\notag\\
&-df_H\Bigl(\nabla^H\mathcal F(W)\Bigr)+\sum_{i=1}^m\tilde R^N\bigl(df(e_i),W\bigr)df(e_i).\label{eq:LH}
\end{align}
Then $I_H(W,W')=\int_Me^{c}\langle L_HW,W'\rangle\,dv_g$ for all $W,W'\in\Gamma(f^{-1}TN)$, so $L_H$ is formally symmetric with respect to $e^{c}dv_g$.
\end{theorem}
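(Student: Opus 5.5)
The plan is to integrate each of the three terms of \eqref{eq:index-polarized} by parts separately, moving all derivatives off $W'$. In each case the tool is Lemma \ref{lem:2.1}, used with the weight $\psi=e^{c}$ and a suitable horizontal one-form. The only $W'$-dependence left will then be pointwise, through $\langle\cdot,W'\rangle$.

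\emph{Gradient term.} For fixed $i$-independent data I take the horizontal one-form $\alpha(X):=\langle\tilde\nabla_{\pi_HX}W,W'\rangle$. Then $\sum_ie_i\alpha(e_i)-\alpha(\pi_H\nabla_{e_i}e_i)$ equals
\[
\sum_i\langle\tilde\nabla_{e_i}W,\tilde\nabla_{e_i}W'\rangle+\langle\Delta_HW,W'\rangle
\]
by metric compatibility. Lemma \ref{lem:2.1} with $\psi=e^c$ turns this into $\int e^c\langle\tilde\nabla_{\zeta}W-\tilde\nabla_{\nabla^Hc}W,W'\rangle$. Note that here $\nabla^He^c=e^c\nabla^Hc$. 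Rearranging gives
\[
\int e^c\sum_i\langle\tilde\nabla_{e_i}W,\tilde\nabla_{e_i}W'\rangle=\int e^c\langle-\Delta_HW+\tilde\nabla_\zeta W-\tilde\nabla_{\nabla^Hc}W,W'\rangle.
\]
Before relying on this, I will double-check the signs against \eqref{eq:ibp}.

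\emph{Coupling term.} I write $\mathcal F(W)\mathcal F(W')$ using \eqref{eq:F-expand} for $W'$. Equivalently, I apply Lemma \ref{lem:2.1} with $\psi:=e^c\mathcal F(W)$ and $\alpha(X):=\langle W',df_H(X)\rangle$. The left side of \eqref{eq:ibp} then gives $\mathcal F(W')+\langle W',\sum_i\beta_H(f)(e_i,e_i)\rangle$. The right side gives $e^c\mathcal F(W)\langle W',df(\zeta)-df_H(\nabla^Hc)\rangle-e^c\langle W',df_H(\nabla^H\mathcal F(W))\rangle$. Collecting the $\mathcal F(W)$ terms produces $-\langle W',\tau_H(f)\rangle$ by \eqref{eq:tension}, and this vanishes by harmonicity. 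What survives is $\int e^c\mathcal F(W)\mathcal F(W')=-\int e^c\langle df_H(\nabla^H\mathcal F(W)),W'\rangle$.

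This is the step I expect to be the main obstacle. The point is to recognize that the exponential weight's gradient $\nabla^Hc$, the $\zeta$ term and $\beta_H$ recombine exactly into the tension field. That cancellation is what keeps $L_H$ second order in the direction $df_H$, with no zeroth-order remainder.

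\emph{Curvature term.} This needs no integration. Pair symmetry gives $\mathcal R_T(W,W')=\langle\tilde R^N(W',df(e_i))df(e_i),W\rangle$-type identities. I will rewrite it as $-\langle\sum_i\tilde R^N(df(e_i),W)df(e_i),W'\rangle$, using $\tilde R_N(X,W,W',X)=\langle\tilde R^N(X,W)W',X\rangle=-\langle\tilde R^N(X,W)X,W'\rangle$. Subtracting it matches the last term of \eqref{eq:LH}.

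Summing the three pieces gives $I_H(W,W')=\int e^c\langle L_HW,W'\rangle$. Formal symmetry then follows immediately, since $I_H$ is symmetric.
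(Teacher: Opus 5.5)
Your proof is correct and follows the paper's argument. The gradient term uses the same horizontal one-form $\langle\tilde\nabla_{\pi_HX}W,W'\rangle$, and the curvature term uses the skew-adjointness of $\tilde R^N(X,W)$, exactly as in the paper. For the coupling term, your application of Lemma \ref{lem:2.1} with $\psi=e^{c}\mathcal F(W)$ and $\alpha=\langle W',df_H(\cdot)\rangle$ is the same identity the paper obtains by applying Corollary \ref{cor:F-vanishes} to the section $\mathcal F(W)\,W'$: the product $\psi\alpha$ is identical, and you simply redo the tension-field cancellation by hand instead of quoting the corollary.
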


\begin{proof}
We transform the three terms of \eqref{eq:index-polarized} in turn.

\emph{Curvature term.} For fixed $X,W$ the endomorphism $\tilde R^N(X,W)$ is skew-adjoint, that is, $\tilde R_N(X,W,Z,T)=-\tilde R_N(X,W,T,Z)$. Hence
\[
-\tilde R_N\bigl(df(e_i),W,W',df(e_i)\bigr)=\tilde R_N\bigl(df(e_i),W,df(e_i),W'\bigr),
\]
and summing over $i$ turns $-\mathcal R_T(W,W')$ into $\bigl\langle\sum_i\tilde R^N(df(e_i),W)df(e_i),W'\bigr\rangle$.

\emph{Second-order term.} Apply Lemma \ref{lem:2.1} with $\psi=e^{c}$ and the horizontal one-form $\alpha(X):=\langle\tilde\nabla_{\pi_HX}W,W'\rangle$. Metric compatibility gives
\[
e_i\alpha(e_i)=\bigl\langle\tilde\nabla_{e_i}\tilde\nabla_{e_i}W,W'\bigr\rangle+\bigl\langle\tilde\nabla_{e_i}W,\tilde\nabla_{e_i}W'\bigr\rangle,
\qquad
\alpha\bigl(\pi_H(\nabla_{e_i}e_i)\bigr)=\bigl\langle\tilde\nabla_{\pi_H(\nabla_{e_i}e_i)}W,W'\bigr\rangle,
\]
so \eqref{eq:ibp} yields
\[
\int_Me^{c}\sum_{i=1}^m\bigl\langle\tilde\nabla_{e_i}W,\tilde\nabla_{e_i}W'\bigr\rangle\,dv_g
=\int_Me^{c}\bigl\langle-\Delta_HW+\tilde\nabla_\zeta W-\tilde\nabla_{\nabla^Hc}W,\ W'\bigr\rangle\,dv_g .
\]

\emph{Exponential coupling term.} Put $\varphi:=\mathcal F(W)\in C^\infty(M)$. For any $W'$, the Leibniz rule gives
\begin{align}
\mathcal F\bigl(\varphi W'\bigr)
&=\sum_{i=1}^m\bigl\langle(e_i\varphi)W'+\varphi\,\tilde\nabla_{e_i}W',\ df(e_i)\bigr\rangle\notag\\
&=\bigl\langle W',df_H\bigl(\nabla^H\varphi\bigr)\bigr\rangle+\varphi\,\mathcal F(W').
\end{align}
Applying Corollary \ref{cor:F-vanishes} to the section $\varphi W'$ gives
\[
0=\int_Me^{c}\mathcal F\bigl(\varphi W'\bigr)dv_g
=\int_Me^{c}\Bigl[\mathcal F(W)\mathcal F(W')+\bigl\langle W',df_H\bigl(\nabla^H\mathcal F(W)\bigr)\bigr\rangle\Bigr]dv_g ,
\]
that is,
\[
\int_Me^{c}\mathcal F(W)\mathcal F(W')\,dv_g=\int_Me^{c}\bigl\langle-df_H\bigl(\nabla^H\mathcal F(W)\bigr),W'\bigr\rangle\,dv_g .
\]

Adding the three contributions gives the identity. Since \eqref{eq:index-polarized} is symmetric in $W$ and $W'$, $L_H$ is formally symmetric with respect to $e^{c}dv_g$.
\end{proof}

\begin{remark}\label{rem:jacobi-link}
The computation shows that, with respect to $e^{c}dv_g$, the second-order part of $L_H$ is
\[
\nabla_H^*\nabla_H+\mathcal F^*\mathcal F ,
\]
a sum of two nonnegative operators. The first three terms of \eqref{eq:LH} constitute the horizontal rough Laplacian $\nabla_H^*\nabla_H$, and the coupling term $-df_H(\nabla^H\mathcal F(\cdot))$ is exactly $\mathcal F^*\mathcal F$. This accounts for the form of the principal symbol below, and for the fact that the coupling term may be discarded in Lemma \ref{lem:lower-bound} and Proposition \ref{prop:finiteness}(i). Moreover, the operator $A$ of Proposition \ref{prop:finiteness}(ii) is the Friedrichs realization of $L_H+C_1$ on $L^2_c$: by Theorem \ref{thm:jacobi-explicit}, $a$ is the quadratic form of $L_H+C_1$ on smooth sections, and $\mathcal H^1_H$ is by construction the closure of the smooth sections in the corresponding form norm.
\end{remark}

\begin{proposition}\label{prop:symbol}
Let $x\in M$ and $\xi\in T^*_xM$. Write $\xi_H:=\xi|_{H_x}$, let $\xi_H^\sharp\in H_x$ be its $g_H$-dual, and put $Z:=df_H(\xi_H^\sharp)\in T_{f(x)}N$. Then the principal symbol of $L_H$ at $(x,\xi)$, acting on $T_{f(x)}N$, is
\begin{equation}\label{eq:symbol}
\sigma_\xi(L_H)=|\xi_H|^2\operatorname{Id}+Z\otimes Z,
\qquad\text{that is,}\qquad
\sigma_\xi(L_H)W=|\xi_H|^2W+\langle W,Z\rangle Z .
\end{equation}
If $Z=0$, then $\sigma_\xi(L_H)=|\xi_H|^2\operatorname{Id}$. If $Z\ne0$, then $Z\otimes Z=|Z|^2\proj_{\mathbb RZ}$, so $\sigma_\xi(L_H)$ has eigenvalue $|\xi_H|^2+|Z|^2$ on $\mathbb RZ$ and eigenvalue $|\xi_H|^2$ on $Z^\perp$. In either case $\sigma_\xi(L_H)$ is positive definite if and only if $\xi_H\ne0$. Consequently $L_H$ is elliptic if and only if $H=TM$; when $\rank H<\dim M$, the principal symbol vanishes on $H^\circ\setminus\{0\}$ and is positive definite whenever $\xi|_H\ne0$.
\end{proposition}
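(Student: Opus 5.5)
To compute $\sigma_\xi(L_H)$ I will isolate the second-order part of the operator \eqref{eq:LH} of Theorem \ref{thm:jacobi-explicit}. The terms $\tilde\nabla_\zeta W$, $\tilde\nabla_{\nabla^Hc}W$ and $\sum_i\tilde R^N(df(e_i),W)df(e_i)$ have order at most one, so they do not enter the principal symbol. In $\Delta_HW$ the correction $\tilde\nabla_{\pi_H(\nabla_{e_i}e_i)}W$ also has order one. What remains is $-\sum_i\tilde\nabla_{e_i}\tilde\nabla_{e_i}W$ together with the coupling term $-df_H(\nabla^H\mathcal F(W))$.

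I will use the convention that replaces each derivative $e_i$ by $\sqrt{-1}\,\xi(e_i)$ in the highest-order part. Equivalently, I apply the operator to $e^{t\phi}W_0$ with $d\phi_x=\xi$ and extract the coefficient of $-t^2$.

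\textbf{Rough Laplacian term.} In a local frame of $f^{-1}TN$, connection coefficients contribute only lower order. Hence $-\sum_i\tilde\nabla_{e_i}\tilde\nabla_{e_i}$ has symbol $\sum_i\xi(e_i)^2\operatorname{Id}=|\xi_H|^2\operatorname{Id}$, because $\{e_i\}$ is $g_H$-orthonormal.

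\textbf{Coupling term.} The leading part of $\mathcal F(W)$ is $\sum_i\langle e_iW,df(e_i)\rangle$, with symbol $\sqrt{-1}\sum_i\xi(e_i)\langle W,df(e_i)\rangle=\sqrt{-1}\langle W,Z\rangle$, since $Z=\sum_i\xi(e_i)df(e_i)=df_H(\xi_H^\sharp)$.

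The map $\varphi\mapsto-df_H(\nabla^H\varphi)=-\sum_j(e_j\varphi)df(e_j)$ has symbol $-\sqrt{-1}Z$. Composing the two, the principal symbol of the coupling term is
\[
(-\sqrt{-1})(\sqrt{-1})\langle W,Z\rangle Z=\langle W,Z\rangle Z .
\]
This agrees with the identification $\mathcal F^*\mathcal F$ in Remark \ref{rem:jacobi-link}, and gives \eqref{eq:symbol}. The only delicate point is keeping the signs and factors of $\sqrt{-1}$ consistent, so that the coupling contribution comes out with a $+$ sign and is positive semidefinite. The remark provides that check.

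\textbf{Spectral description.} The spectral statements are linear algebra. If $Z\ne0$, then $Z\otimes Z=|Z|^2\proj_{\mathbb RZ}$, which gives eigenvalue $|\xi_H|^2+|Z|^2$ on $\mathbb RZ$ and $|\xi_H|^2$ on $Z^\perp$.

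For positivity: if $\xi_H\ne0$, every eigenvalue is at least $|\xi_H|^2>0$. If $\xi_H=0$, then $Z=0$ and the symbol vanishes. So $\sigma_\xi(L_H)$ is positive definite if and only if $\xi_H\ne0$.

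\textbf{Ellipticity.} $L_H$ is elliptic if and only if $\xi_H\ne0$ for every nonzero $\xi$, that is, if and only if the annihilator $H^\circ$ is zero. This holds exactly when $H=TM$. When $\rank H<\dim M$, the annihilator $H^\circ$ is nonzero and the symbol vanishes on $H^\circ\setminus\{0\}$.
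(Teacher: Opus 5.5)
Your proposal is correct and follows the paper's proof essentially step for step. You discard the first-order terms, read off $|\xi_H|^2\operatorname{Id}$ from $-\Delta_H$, compose the symbols $\sqrt{-1}\langle W,Z\rangle$ of $\mathcal F$ and $-\sqrt{-1}\,uZ$ of $u\mapsto -df_H(\nabla^H u)$ to obtain $\langle W,Z\rangle Z$, and finish with the same linear algebra. Your positivity argument, noting directly that $\xi_H=0$ forces $Z=0$ and hence a zero symbol, is if anything slightly cleaner than the paper's appeal to $Z\otimes Z$ having rank at most one.
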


\begin{proof}
Replace $\tilde\nabla_{e_i}$ by $\sqrt{-1}\,\xi(e_i)$ in \eqref{eq:LH} and retain the terms of order two. The operator $-\Delta_H$ contributes
\[
-\sum_{i=1}^m\bigl(\sqrt{-1}\,\xi(e_i)\bigr)^2\operatorname{Id}=\sum_{i=1}^m\xi(e_i)^2\operatorname{Id}=|\xi_H|^2\operatorname{Id},
\]
while $\tilde\nabla_\zeta W$, $\tilde\nabla_{\nabla^Hc}W$ and the curvature term are of order at most one. The map $W\mapsto\mathcal F(W)$ has symbol
\[
W\longmapsto\sqrt{-1}\sum_{i=1}^m\xi(e_i)\bigl\langle W,df(e_i)\bigr\rangle=\sqrt{-1}\,\langle W,Z\rangle,
\]
and $u\mapsto-df_H(\nabla^Hu)$ has symbol $u\mapsto-\sqrt{-1}\,u\,Z$. Composing gives $W\mapsto\langle W,Z\rangle Z$, which proves \eqref{eq:symbol}. If $Z\ne0$, then $\langle W,Z\rangle Z=|Z|^2\proj_{\mathbb RZ}W$, which gives the eigenvalue description. In both cases $Z\otimes Z$ is nonnegative of rank at most one. Hence $\sigma_\xi(L_H)$ is positive definite exactly when $|\xi_H|^2>0$. Finally, $\xi_H\ne0$ holds for every $\xi\ne0$ if and only if $H^\circ=0$, that is, if and only if $H=TM$.
\end{proof}

\bigskip
\noindent Xin Huang\\
School of Mathematics and Statistics\\
Nanjing University of Information Science and Technology\\
Nanjing 210044, P.\,R.\,China\\
\texttt{003941@nuist.edu.cn}

\end{document}